\documentclass[12pt]{article}
\usepackage{graphicx} 
\usepackage{amsmath}
\usepackage{amsthm}
\usepackage{amssymb}
\usepackage{bussproofs}
\def\fCenter{\Rightarrow}

\usepackage{array}
\usepackage{tikz}
\usetikzlibrary{positioning,quotes}

\newtheorem{theorem}{Theorem}[section]
\newtheorem{corollary}[theorem]{Corollary}
\newtheorem{lemma}[theorem]{Lemma}
\newtheorem{claim}[theorem]{Claim}
\newtheorem{proposition}[theorem]{Proposition}

\theoremstyle{definition}
\newtheorem{definition}[theorem]{Definition}

\def\liff{\leftrightarrow}

\newcommand{\N}{\mathbb{N}}

\newcommand\TN{{\hbox{\bfseries \tt N}}}

\newcommand\dts{,\dots ,}

\newcommand\be{\begin{equation}}
\newcommand\bel[1]{\begin{equation}\label{#1}}
\newcommand\ee{\end{equation}}
\newcommand\ba[1]{\begin{array}{#1}}
\newcommand\ea{\end{array}}
\newcommand\bl{\begin{lemma}}
\newcommand\bll[1]{\begin{lemma}\label{#1}}
\newcommand\el{\end{lemma}}
\newcommand\bi{\begin{itemize}}
\newcommand\ei{\end{itemize}}
\newcommand\bt{\begin{theorem}}
\newcommand\btl[1]{\begin{theorem}\label{#1}}
\newcommand\et{\end{theorem}}
\newcommand\ben{\begin{enumerate}}
\newcommand\een{\end{enumerate}}
\newcommand\bpr{\begin{proposition}}
\newcommand\bprl[1]{\begin{proposition}\label{#1}}
\newcommand\epr{\end{proposition}}
\newcommand\bcl{\begin{claim}}
\newcommand\ecl{\end{claim}}
\newcommand\bprf{\begin{proof}}
\newcommand\eprf{\end{proof}}
\newcommand\bdf{\begin{definition}}
\newcommand\edf{\end{definition}}

\def\UnivTk#1#2{{T^{U,#1,#2}}}
\def\Univk#1#2{{U^{#1,#2}}}
\def\UnivT#1{{T^{U,#1}}}

\def\Tbeta{\beta}
\def\TPr{\pi}

\def\Tparent{{\mathchoice{\hbox{\it Parent}}{\hbox{\it Parent}}{\hbox{\scriptsize \it Parent}}{\hbox{\scriptsize \it Parent}}}}

\def\TTm{\hbox{\it Time}}
\def\MSP{{\hbox{\sc msp}}}

\def\ThTG{{\mathcal T}^{\hbox{\tiny \em G}}}
\def\ThTP{{\mathcal T}^{\hbox{\tiny \em P}}}
\def\ThTPf{{\mathcal T}^{\hbox{\tiny \em P,f}}}
\def\PV{\mathrm{PV}}

\def\noteGeneric#1#2{\marginpar{\scriptsize \raggedright #1: #2}}
\def\noteSam#1{\noteGeneric{SB}{#1}}

\def\noteSam#1{\relax}
\def\notPavel#1{\relax}

\title{Herbrand Game Complexity}
\author{Sam Buss \\
Department of Mathematics \\
Univ.\ of California, San Diego \\
sbuss@ucsd.edu
\and 
Pavel Pudl\'ak \thanks{partially supported by GA\v{C}R grant 25-16311S and institute's grant RVO~67985840.}\\
Institute of Mathematics \\
Czech Academy of Sciences, Prague \\
pudlak@math.cas.cz}
\date{draft \today}

\begin{document}

\maketitle

\begin{abstract}
The Student-Teacher game is an extension of Herbrand's theorem. In the game, 
Student and Teacher take turns giving values for existentially and universally quantified variables, 
and Student is allowed to backtrack to propose other values for existentially quantified variables.
The game
has become increasingly important for proving lower
bounds on provability in theories of bounded arithmetic.
In those applications, the game is played in an arithmetical theory; however, 
this paper studies the Student-Teacher game in the setting
of pure first-order logic, so it is more closely related to Herbrand's theorem
and the midsequent theorem.
When played in pure first-order logic, 
a formula~$\varphi$ is logically valid if and only if there is a Student-Teacher
game with a winning strategy for Student for establishing~$\varphi$. 
We present a refined version of the Student-Teacher game in arbitrary first-order
universal theories and include a proof of the validity of Student-Teacher games
from the sequent calculus midsequent theorem in an appendix.

The game is presented as a finite tree with vertices and edges labeled by terms, 
and with a total order on the nodes. The totally ordered tree represents the players' interaction. 
Our main results show that minimal trees in the Student-Teacher game can be arbitrarily complex. 
Specifically, for every totally ordered tree~$T$, 
we construct a valid prenex formula~$\varphi$ such that 
every Student-Teacher game for~$\varphi$ contains $T$ as a substructure.   
Using formulas parameterized by numerals, we prove similar results
for computable and partial computable sequences of totally ordered trees.
It follows not only that there is no computable bound on the size of Herbrand disjunctions, 
which is a well-known fact, but also that there is no bound on their complexity
in the sense that it is not possible to restrict the types of totally ordered trees 
knowing only the length of the formula.

\end{abstract}

\section{Introduction}

Herbrand's theorem is a useful way to represent proofs in first-order logic and 
to interpret proofs constructively. 
Suppose $\exists x_1\, \exists x_2 \cdots \exists x_k A(x_1, \ldots, x_k)$ is a
valid, purely existential first-order formula in the language~$L$ where
$A$~is quantifier-free. For simplicity,
assume that the equality symbol~$=$ is not included in~$L$. 
Herbrand's theorem tells us that
there is a finite list of vectors~$\vec t_i$ of $L$-terms such that the disjunction
$\bigvee_i A(\vec t_i)$ is a tautology. 

More generally, suppose $C$ is a prenex
formula
\begin{equation}\label{eq:HerbrandExis2}
\forall y_0\, \exists x_1\, \forall y_2\, \exists x_2 \cdots \forall y_{k-1}\, \exists x_k \,
   B(y_0,x_1,y_1,x_2,\ldots,y_{k-1},x_k)
\end{equation}
where $B$ is quantifier-free.  By introducing Skolem functions~$g_i(x_1,\ldots, x_i)$,
the formula~$C$ is valid if and only if the purely existential formula
\begin{equation}\label{eq:HerbrandExis3}
\exists x_1\, \exists x_2\, \exists x_3 \cdots \exists x_k \, 
  B(g_0, x_1, g_1(x_1), x_2, g_2(x_1, x_2), x_3,\ldots, g_{k-1}(x_1, \ldots, x_{k-1}), x_k)
\end{equation}
is valid.\footnote{The 0-ary function~$g_0$ is a new constant symbol.
Each $g_i$ is a new $i$-ary function symbol.}   
Herbrand's theorem applied to this existential formula
implies there is a finite set of terms $t_{i,j}$, 
where $1\le j\le k$ and $1\le i\le I$ for some~$I$,
such that
\begin{equation}\label{eq:Herbrand4}
\bigvee\nolimits _{i\le I} B( g_0, t_{i,1}, g_1(t_{i,1}), t_{i,2} , g_2( t_{i,1}, t_{i,2} ),
      \ldots, g_{k-1} ( t_{i,1}, t_{i,2}, \ldots, t_{i,k-1} ), t_{i,k} )
\end{equation}
is a tautology.
Although the formulation of the disjunction~(\ref{eq:Herbrand4}) may look superficially simple, the
terms~$t_{i,j}$ and the relationships between them may be very complex. 
The aim of the present paper
is to understand how complex this can be.


The complexity of the Herbrand disjunction will be studied by
using a game formulation of Herbrand's theorem, called the ``Student-Teacher'' game, 
that has been used in the past to extract additional 
constructive content from Herbrand's theorem. The modern versions
of the Student-Teacher game arose 
from Kraj\'\i\v cek-Pudl\'ak-Takeuti~\cite{KPT:BAandPH} showing 
conditional independence results in bounded arithmetic.
A winning strategy for Student in a Student-Teacher game is a special 
case of a Herbrand proof of the prenex formula~(\ref{eq:Herbrand4}); hence the winning
strategy can be viewed as a first-order proof of~(\ref{eq:HerbrandExis2}). 
Buss-Ko{\l}odziejczyk-Thapen~\cite{BKT:fragments} and
Li-Oliveira~\cite{LiOliveira:Unprovability} have given general
proofs of the completeness and soundness of the Student-Teacher game 
as a proof system for prenex formulas. Appendix~\ref{sec:midsequent}
below will give another proof based on the midsequent theorem for first-order logic.
The midsequent theorem states that every valid prenex formula has a cut-free sequent calculus
proof in which the top part uses only propositional inferences and the bottom
part uses only quantifier inferences and contractions. 
Appendix~\ref{sec:midsequent} shows how to construct a
winning strategy in the Student-Teacher game from such a proof.%
\footnote{Midsequents and Herbrand disjunctions are essentially the same objects. So, we can also say that we study the complexity of midsequents.}

The idea of the game goes back to
Kreisel~\cite{Kreisel:interpretationI,Kreisel:interpretationII} 
where he introduced the \emph{no-counterexample interpretation}. 
In that interpretation, the universal quantifier is interpreted 
as the non-existence of a counterexample to the negation. 
He also observed that a sentence of the form 
$\forall x\,\exists y\,\forall z\,\varphi(x,y,z)$ is logically valid 
if and only if for some $n$, there are terms
\bel{e-terms}
t_1(x),\,t_2(x,z_1),\,t_3(x,z_1,\dots, z_2),\,\dots,\,t_n(x,z_1,\dots,z_{n-1}),
\ee
where the displayed variables are the only variables that can occur in the term, such that
\bel{e-dis}
\varphi(x,t_1(x),z_1)\vee\varphi(x,t_2(x,z_1),z_2)\vee\dots\vee\varphi(x,t_n(x,z_1,\dots,z_{n-1}),z_n)
\ee
is a tautology. 
  
The formula (\ref{e-dis}) was applied 
by Kraj\'\i\v cek-Pudl\'ak-Takeuti~\cite{KPT:BAandPH} 
to prove separations 
of theories in bounded arithmetic. This method is
known as the 
KPT theorem, and a number of authors have applied it 
to give separation or unprovability results (often conditional)
for fragments of bounded arithmetic, including
Pudl\'ak~\cite{Pudlak:somerelations},
Buss~\cite{Buss:BAandPH},
Zambella~\cite{Zambella:notes},
Cook-Thapen~\cite{CookThapen:replacement},  
Cook-Kraj\'\i{\v c}ek~\cite{CookKrajicek:NPsubsetPpoly}, 
Byd{\v z}ovsk{\'y}-Kraj{\'\i\v c}ek-Oliveira~\cite{BKO:Consistency}, 
Pich-Santhanam~\cite{PichSanthanam:StrongCoNondet},   
Carmosino-Kabanets-Kolokolova-Oliveira(-Tsintsilidas)~\cite{CKKO:LEARN,CKKOT:Hierarchy}, 
Li-Oliveira~\cite{LiOliveira:Unprovability},
Je\v{z}il~\cite{Jezil:Factorization}, and
Je\v{z}il-Tsintsilidas~\cite{JezilTsintsilidas:StudentTeacher}.  
(The name ``KPT Theorem'' comes from~\cite{Buss:BAandPH}.)

The intuition behind the Student-Teacher interpretation 
of the disjunction~(\ref{e-dis}) is as follows. 
Teacher plays a value~$x$, and the 
task of Student is to find a~$y$ such that 
$\forall z\,\varphi(x,y,z)$ is true. 
(The word ``true'' should not be interpreted literally, 
since we do not talk about a concrete model.) 
Student proposes $t_1(x)$. 
If it is a correct value for~$y$, the game ends; 
otherwise, Teacher responds with a counterexample~$z_1$, 
which Student can use to propose a 
value $t_2(x,z_1)$ for~$y$, and so on. Since the disjunction~(\ref{e-dis}) is a tautology, 
in any model one of Student's values should be correct. 

In the case of the prefix $\forall^*\exists^*\forall^*$, 
we have the standard form described above,\footnote{Stars ($^*$'s) 
denote that the same quantifier can be repeated, or missing.} 
but in general it is not true that a simple standard form exists. 
Already for the prefix $\exists\forall\exists$, there is no simple standard form. 
Consider possible games for a sentence $\exists x\,\forall y\,\exists z\,\varphi(x,y,z)$. 
Let Student propose $a_1,\ldots, a_n$ 
as possible values for~$x$ and 
Teacher respond with $b_1,\ldots, b_n$ as (purported) counterexamples 
for the values of~$y$. 
For each $i=1,\ldots, n$, Student can use $m_i$~many plays to
propose multiple values 
$c_{i,1},\ldots, c_{i,m_i}$ for the existentially quantified~$z$.
Thus in the $\exists\forall\exists$ case, the game  
can be viewed as being played on a tree of depth~2. 
In this tree the root is unlabeled, 
edges are labeled with values played by Student, 
the internal nodes are labeled with values played by Teacher, 
and the leaves are unlabeled. 
In the purely existential case above,
the shape of the tree of depth~1 can be specified by the single parameter~$n$. 
In the $\exists\forall\exists$ case however,
there are a variety of possible shapes for the game trees, given by parameters $n,m_1,\ldots, m_n$.
 
These trees, by themselves, do not totally capture the complexity of the game,
because Student can play $a_1,\ldots, a_n$, $c_{1,1},\ldots, c_{1,m_1},\ldots, c_{n,1},\ldots, c_{n,m_n}$ 
in any order except that she can only play $c_{i,j}$ after 
$a_i$ and~$b_i$ have been played. 
In other words, the order of plays by Student must be consistent with the ordering on branches, 
where the ordering is from the root to the leaves.

We will use \emph{totally ordered trees} (see Definition~\ref{def:totallyOrderedTree}) 
to describe the order in which Student makes plays
in the Herbrand game. These trees are our main conceptual contribution; 
they enable us to capture the structural complexity of Herbrand disjunctions.

An example of such trees is given in Figure~\ref{fig:treeExample}
for a formula 
\[
\forall y_0 \, \exists x_1 \, \forall y_1 \, \exists x_2 \, \forall y_2 \, \varphi(y_0,x_1,y_1,x_2,y_2) .
\]
The nodes of the tree are labeled with values played by Teacher; the
edges are labeled with values played by Student. In the figure,
$b$~labels the root and is the value for~$y_0$ played by Teacher. The
edges labeled $a_1,a_2,a_3$ represent three plays of
Student proposing values for the existentially quantified~$x_1$.
The values $b_1,b_2,b_3$ give three values played by Teacher
as values for~$y_1$ as responses to the
corresponding Student's plays. The deepest edges and nodes are
similarly labeled with values $a_{1,1},a_{1,2},a_{2,1},a_{3,1}$ proposed 
by Student for~$x_2$
and values $b_{1,1},b_{1,2},b_{2,1},b_{3,1}$ played by Teacher for~$y_2$ in response.  
Student wins
the game if the disjunction of the instantiations of~$\varphi$ in all four
branches, namely
\[
\varphi(b,a_1,b_1,a_{1,1},b_{1,1}) \lor
   \varphi(b,a_1,b_1,a_{1,2},b_{1,2}) \lor
   \varphi(b,a_2,b_2,a_{2,1},b_{2,1}) \lor
   \varphi(b,a_3,b_3,a_{3,1},b_{3,1}) 
\]
is a tautology. The Student-Teacher game generalization of
Herbrand's theorem implies that the prenex formula
is logically valid if and only if there is a totally ordered tree for which
Student has a winning strategy.

In this characterization, we can assume that Teacher always plays new free variables
(for the $b$-variables), and that Student must play
terms that involve as free variables only variables already played by Teacher. In the sequel, we will assume both conditions. 
Thus the variables that can appear in Student's terms depend on
Student's order of play in the game tree. Figure~\ref{fig:treeExample} shows
this for two different orderings of the game tree, namely a breadth-first play order
and a depth-first play order. Of course, other play orders are possible.

Formal definitions for Herbrand game trees are given in the next section, but
the highlights are as follows. The
Herbrand game will be played for a prenex formula
of the form
\[
\forall \vec y_0\,\exists \vec x_1\,\forall \vec y_1\cdots\exists \vec x_k\,\forall \vec y_k\,\varphi(\vec y_0,\vec x_1,\vec y_1,\dots,\vec x_k,\vec y_k),
\]
with $\varphi$ quantifier-free. Teacher plays values~$\vec b_i$ for the
variables~$\vec y_i$, and Student plays values~$\vec a_i$ for the~$\vec x_i$.
\begin{description}
\item[1.] Teacher and Student play values starting from the
root of the Herbrand game tree, working towards the leaves.
Teacher plays new free variables as values, and they label
nodes in the tree.
Student always plays terms as values, and these label edges.
Student plays terms that involve only the free variables
previously played by Teacher. 
\item[2.] Student always plays values~$\vec a_i$ 
for an entire existential block~$\exists \vec x_i$ at once. Teacher then responds on 
the edge just played by Student
by giving values~$\vec b_i$ (free variables) for the entire universal block~$\vec y_i$.
\item[3.] Student never repeats a query; i.e., Student never
re-plays the same values {\em below the same node} in the
game tree. 
\item[4.] The first and last blocks of universally quantified
variables, $\vec y_0$ and~$\vec y_k$, may be vacuous. If so,
Teacher makes null plays for these values. If $\forall \vec y_0$ is vacuous,
we assume w.l.o.g.\ that the language contains a constant symbol so
Student can play a closed term for her first move.
\item[5.] If the innermost block $\forall y_k$ of quantifiers is
vacuous, Student does not play values for the innermost $\exists x_k$
values until the rest of the Herbrand game tree has been completely played.
The reason for this is that Student will receive no Teacher answer
for plays of values for the $\vec x_k$'s, thus she can freely postpone these
plays until the end of the game.
\end{description}
Our proofs will involve games with an additional \emph{copycat} restriction on plays by
Student. In a copycat strategy, the value played by Student is
always equal to the previous value played by Teacher. Specifically,
after Teacher plays a value~$b$ on some node in the game tree,
Student picks some node (not necessarily the same node where Teacher just played!) and
plays the same value~$b$
as a label on a new outgoing edge of that node.

\begin{figure}[t]
\begin{center}
\begin{tikzpicture}[Teach/.style=fill,draw,circle,inner sep = 1.5pt]
\node[Teach,label={175:$b$}] (root) at (0.5,0) {};
\node[Teach,label={177:$b_1$}] (b1) at (-1, -1) {};
\node[Teach,label={0:$b_2$}] (b2) at (1.5, -1) {};
\node[Teach,label={0:$b_3$}] (b3) at (3, -1) {};
\node[Teach,label={180:$b_{1,1}$}] (b11) at (-1.5, -2.5) {};
\node[Teach,label={0:$b_{1,2}$}] (b12) at (-0.5, -2.5) {};
\node[Teach,label={0:$b_{2,1}$}] (b21) at (1.5, -2.5) {};
\node[Teach,label={0:$b_{3,1}$}] (b31) at (3, -2.5) {};
\draw (root) --node[above left]{$a_1$} (b1) --node[left]{$a_{1,1}$} (b11);
\draw (b1) --node[right]{$a_{1,2}$} (b12);
\draw (root) --node[below left]{$a_2$} (b2) --node[right]{$a_{2,1}$} (b21);
\draw (root) --node[above]{$a_3$} (b3) --node[right]{$a_{3,1}$} (b31);
\end{tikzpicture}
\end{center}
\caption[A Herbrand game tree]{A Herbrand game tree (without ordering).  \small
The nodes are labeled with
$b$-values, denoting distinct free variables played by Teacher.
The edges are labeled with $a$-values, denoting terms played by Student. The
dependence of Student's terms on Teacher's variables is controlled
by the total ordering of the nodes.
\parindent = 2em

For example, the breadth-first order (conflating nodes with their labels) is
$ b \prec b_1 \prec b_2 \prec b_3 \prec b_{1,1} \prec b_{1,2} \prec b_{2,1} \prec b_{3,1}$. 
The $a$-terms may depend on variables as 
$a_1 = a_1(b)$, 
$a_2 = a_2(b,b_1)$,
$a_3 = a_3(b,b_1, b_2)$,
$a_{1,1} = a_{1,1}(b,b_1, b_2, b_3)$,
$a_{1,2} = a_{1,2}(b,b_1, b_2, b_3, b_{1,1})$,
$a_{2,1} = a_{2,1}(b,b_1, b_2, b_3, b_{1,1}, b_{1,2})$, and
$a_{3,1} = a_{3,1}(b,b_1, b_2, b_3, b_{1,1}, b_{1,2},b_{2,1})$.

As a second example, the depth-first order 
is
$b \prec b_1 \prec b_{1,1} \prec b_{1,2} \prec b_2 \prec b_{2,1} \prec b_3 \prec b_{3,1}$,
and allows the dependencies 
$a_1 = a_1(b)$, 
$a_{1,1} = a_{1,1}(b,b_1)$,
$a_{1,2} = a_{1,2}(b,b_1,b_{1,1})$,
$a_2    = a_2   (b,b_1,b_{1,1},b_{1,2})$,
$a_{2,1} = a_{2,1}(b,b_1,b_{1,1},b_{1,2},b_2)$,
$a_3    = a_3   (b,b_1,b_{1,1},b_{1,2},b_2,b_{2,1})$, and
$a_{3,1} = a_{3,1}(b,b_1,b_{1,1},b_{1,2},b_2,b_{2,1},b_3)$.
}
\label{fig:treeExample}
\end{figure}

\paragraph{The Herbrand game in universal theories.}
So far we have assumed that we only use pure predicate logic without equality.
To handle equality, we need to work over a theory that includes equality axioms,
formulated as universal formulas. More generally, we can work with
arbitrary theories that are axiomatized by purely universal sentences. 
Very little changes for Herbrand's theorem and the Student-Teacher
games when working relative to a universal theory.

Let $\mathcal T$ be a universal theory and let $\Phi$ be a 
prenex sentence provable in~$\mathcal T$. 
Then there is a finite set of (universal) axioms $A=\{A_1\dts A_k\}$ of~$\mathcal T$ such that 
the sequent $A\Rightarrow \Phi$ is provable. According to the midsequent theorem, 
there therefore exists a sequent 
\[
\Gamma\rightarrow\Delta
\]
that is provable in propositional logic, 
where $\Gamma$ consists of substitution instances of the matrices of the 
axioms $A_1\dts A_k$, and $\Delta$ consists of substitution instances 
of the matrix of $\Phi$, and such that $A \Rightarrow \Phi$
is derivable from the sequent $\Gamma \rightarrow \Delta$
by a proof~$P$ that uses only quantifier and structural inferences.
Appendix~\ref{sec:midsequent} describes how to construct a Herbrand game tree 
with a winning strategy
for Student via the midsequent theorem. This construction can 
also be applied
to the proof~$P$. Indeed, without loss of generality, the lower part
of the proof~$P$ below the midsequent
starts with $\forall$:left inferences 
that derive $A \rightarrow \Phi$ from $\Gamma\rightarrow \Phi$, 
and the rest of $P$ is handled by the construction
in Appendix~\ref{sec:midsequent}.

The important difference when working over a universal theory~$\mathcal T$
is that now $\bigvee \Delta$ may not be a propositional tautology;
instead, it is provable in~$\mathcal T$.

Our second bound on the complexity of Herbrand disjunctions 
in Theorem~\ref{thm:mainPsiN}
treats universal theories by a slightly different method.
The provable sequent $A\rightarrow \Phi$ is handled with a Student-Teacher
game for a prenexification of the provable formula $A\rightarrow\Phi$. 
Specifically, suppose $\Phi$ is 
\[
\forall \vec y_0\,\exists \vec x_1\,\forall \vec y_1\cdots\exists \vec x_k\,\forall \vec y_k\,\varphi(y_0,x_1,y_1,\dots,x_k,y_k).
\]
and $\mathcal T \vdash \Phi$.
Let each $A_i \in A$ have the form $\forall \vec v\, A_i^*(\vec v)$
with $A^*_i$ quantifier-free.  
We then replace the matrix~$\varphi$ of~$\Phi$ with the 
prenex formula 
\begin{equation}\label{eq:AddingAxioms}
    \exists \vec v \, [\varphi(y_0,x_1,y_1,\ldots,x_k,y_k)
      \lor \bigvee\nolimits_i \lnot A^*_i(\vec v) ] .
\end{equation}
The resulting formula is valid and is in prenex form with $2k+2$ quantifier blocks.\footnote{We
could reduce the quantifier alternation by one by merging the $\exists \vec v$
quantifiers with the $\exists x_k$ quantifier block. This avoids increasing the
alternation of quantifiers, but would have the disadvantage of complicating other
aspects of our constructions.}

The methods of working with universal theories are entirely equivalent,
and both methods are used below. 
Theorem~\ref{thm:firstLBproof} uses the first method
for simplicity; Theorem~\ref{thm:mainPsiN}
uses the second method.

We remark that one can get important information also from provability in theories 
that are not universal. An early example is a version of the KPT theorem in~$S^1_2$, 
introduced by the second author~\cite{Pudlak:somerelations}. Subsequent work has exploited
similar constructions to prove independence results for fragments of bounded
arithmetic; for these, see
\cite{Buss:BAandPH,%
Zambella:notes,%
CookThapen:replacement,%
CookKrajicek:NPsubsetPpoly,%
BKO:Consistency,%
PichSanthanam:StrongCoNondet,%
CKKO:LEARN,%
CKKOT:Hierarchy,%
LiOliveira:Unprovability}.

\paragraph{Some related work.} 
We will only mention some results that connect the complexity of 
Student-Teacher games with first-order theories. 

In the seminal paper \cite{KPT:BAandPH}, the authors showed that the 
sentences of the form $\forall x\exists y\forall z\varphi(x,y,z)$ provable in $PV_1$, 
where the quantifiers are bounded and $\varphi$ is a sharply bounded formula, 
can be proved using a Student-Teacher games with constant number of rounds, 
but constant numbers of rounds do not suffice for such theorems of~$S^1_2$, 
unless $NP\subseteq P/poly$. 
This was then used to separate $PV_1$ from $S^1_2$ under the same assumption. 
On the other hand, Student-Teacher games can be used
for theorems of~$S^1_2$  
if a polynomial number of rounds is allowed~\cite{Pudlak:somerelations}.

Kraj\'\i\v{c}ek, Pudl\'ak, and Sgall studied classes of TF$\Sigma_2$ defined by Student-Teacher games with bounded number of rounds~\cite{KPS:interactive}. They proved that for any sublinear, unbounded, increasing, polynomial-time
function $r(n)$, the class defined by games with $r(n)+1$ rounds is not contained in the class defined by games with $r(n)$ rounds.

This hierarchy was refined in Je\v zil~\cite{Jezil:Factorization} 
and Je{\v z}il-Tsintsilidas~\cite{JezilTsintsilidas:StudentTeacher}. 
They introduced  \emph{parallel queries}, which is an unfortunate terminology because it is Student's \emph{answers} that are parallel. What it means is that given Teacher's question~$b$, 
Student replies with multiple answers $a_1,\dots,a_p$, and Teacher produces counterexamples $b_1,\dots,b_p$ only after Student presents all these answers. In other words, Student cannot use any of the  $b_1,\dots,b_p$ to compute $a_1,\dots,a_p$. 
Je{\v z}il-Tsintsilidas~\cite{JezilTsintsilidas:StudentTeacher}
showed that any polynomial number of parallel answers cannot replace one additional round, unless $NP\subseteq P/poly$.\footnote{This is the basic form of their theorem, which in general applies to higher total function classes.} Using this theorem they show that the sequence of theories
\[
PV_1,\ PV_1 + BB(\Sigma^b_1 ),\ PV_1 + LLIND(s\Sigma^b_1),\ PV_1 + LLIND(\Sigma^b_1),\ S^1_2
\]
is strictly increasing unless $NP\subseteq P/poly$. Before, these separations were only known under stronger assumptions. 

Parallel answers cannot be expressed using games as defined in this paper because we require that Teacher replies immediately. To be able to do it, we would need to define the total ordering on nodes \emph{and edges,} and not restrict Teacher.

\paragraph{Outline of the paper and main results.}
Section~\ref{sec:HerbrandGameTree} gives the complete definitions of the Student-Teacher game
and Herbrand game
trees, including the notions of totally ordered trees, embeddings between trees,
Herbrand game trees,
and winning strategies
for Student.  We also describe an alternative way of viewing the
Herbrand game tree as a matrix, where the matrix entries are plays by Teacher
and Student, and the columns correspond to branches in the game tree.
Appendix~\ref{sec:midsequent} gives a direct proof, from the
midsequent theorem, that a prenex formula~$\Phi$ is logically valid 
if and only if it has a Herbrand game tree~$T_\Phi$ that
admits a winning strategy for Student.

Our main theorems state that there is 
no computable bound (as a function of~$\Phi$) on the complexity
of Herbrand game trees~$T_\Phi$ needed for~$\Phi$.
In effect, Herbrand trees~$T_\Phi$
that admit a Student-Teacher games
can be arbitrarily complicated, including with
complicated orderings~$\prec$. It is a folklore result
that the Herbrand theorem for $\forall \exists$ formulas~$\Phi$ does
not have a computable bound on the number of terms (see~\cite{Buss:herbrandtheorem}).
Specifically, our results apply to arbitrary prenex formulas~$\Phi$; and 
state that the
order in which Student and Teacher play labels in the Herbrand game tree 
can be as complicated as any computably enumerable set.

The first result, discussed in Section~\ref{sec:arbitrary}, 
is that for \emph{every} 
totally ordered tree~$T$,
there is a prenex formula~$\Phi_T$ such that any Herbrand
game tree with a winning strategy for Student must be 
at least as complex as~$T$.
Conversely, we show that the tree~$T$ can be used to give a winning
strategy for Student, except with the proviso that extra edges are added
to leaves of~$T$, allowing Student a final round of plays.  The presence
of the extra edges corresponds to the inclusion of an additional
innermost block of existential quantifiers when working with 
provability relative to a universal theory~$\mathcal T$. This is due
to the construction of adding existential quantifiers $\exists \vec v$ as
shown in~(\ref{eq:AddingAxioms}).

Since the construction of~$\Phi_T$ is done for an arbitrary~$T$,
information-theoretic considerations imply that, in general, 
$\Phi_T$~must be as large as~$T$. Our next results
improve on this, by constructing a family of logically valid, prenex
formulas~$\Phi$ such that any Herbrand game tree~$T$ with a winning
strategy for Student must have size that is {\em not}
computably bounded in the size of~$\Phi$. Thus, there is no
computable bound on the size of a Herbrand game tree as a function of
the size of a formula~$\Phi$.  This strengthens the folklore fact that
there is no computable upper bound on the
the number of terms in a Herbrand disjunctions~\cite{Buss:herbrandtheorem}.
This is a strengthening since 
the construction of the Herbrand game tree implies that not only must
the number of terms in a Herbrand disjunction be large in the worst
case, the total ordering of
Herbrand game tree must be highly complex. In fact, what
we show is that, for any computable sequence of trees $T_1,T_2,T_3,\ldots$, 
there are formulas $\Phi_1,\Phi_2,\Phi_3, \ldots$ such that any winning strategy for Student 
for~$\Phi_r$ must be at least as complex
as~$T_r$.  
Even more, for any partial computable sequence of trees~$T_r$,
i.e.\ where the set of pairs $(r,T_r)$ is computably enumerable, there
are formulas~$\Phi_r$ such that any winning strategy for Student 
for~$\Phi_r$ must be at least as complex
as~$T_r$.  The formulas~$\Phi_r$ can be constructed in polynomial time
as a function of~$r$.

Sections \ref{sec:FirstLBproof} and~\ref{sec:SecondLBproof} prove this
in two different ways.  The proof in Section~\ref{sec:SecondLBproof}
proves this with a direct, somewhat ``brute force'', approach; it works in
pure first-order logic and forms formulas~$\Phi$
that contain a single extra innermost block of existential quantifiers; these
extra existential quantifiers are from the construction shown in~(\ref{eq:AddingAxioms}).
The proof in Section~\ref{sec:FirstLBproof} works relative to a universal theory,
and uses a construction
of the formulas~$\Phi$ that adds 
two extra blocks of quantifiers,~$\exists\forall$.  
(It would need to add three blocks,
$\exists \forall \exists$, if the same construction 
was rephrased over pure first-order logic instead of
over a universal theory).  For simplicity, Section~\ref{sec:FirstLBproof}
uses a computable sequence of trees~$T_r$ instead of a partial computable 
sequence.
Although it adds
more quantifiers, an interesting aspect of
the proof of Section~\ref{sec:FirstLBproof} is that the formulas~$\Phi_i$ express
a form of self-consistency for Herbrand game provability,  
which may point the direction to further results.

Appendix~\ref{sec:UnivTrees} gives a method for constructing universal depth~$k$
totally ordered trees. Since any totally ordered tree can be embedded into
one of these trees with only a polynomial size blowup, these
trees provide uniform trees that can suffice as Herbrand game trees for
any logically valid formula.

\paragraph{Student-Teacher game in concrete structures.}
It is also possible to use the Student-Teacher game in a concrete structure, 
say, in the standard model of the integers, although we will not consider this in this paper.
Note that in a concrete structure the game would be trivial 
if we do not restrict the possible strategies of Student: 
If Student is allowed to play optimally, the game finishes in $k$ rounds if there are $k$ quantifiers. 
If, however, the sentence is provable in a universal theory~$\mathcal T$,
we know that there exists a strategy based on terms available in~$\mathcal T$. 
For example, let {\it TruePV} denote the set of all true universal sentences 
in the language of $\PV$ of polynomial time computable functions.
Provability of a sentence in {\it TruePV} 
implies that Student has a strategy defined using $\PV$ terms. 
Since $\PV$ terms define polynomial time computable functions in 
$\N$, in this structure Student has a strategy defined using 
polynomial time computable functions. 

The first paper introducing the KPT construction,~\cite{KPT:BAandPH}, can be
viewed as working in a concrete structure, namely, the integers. Similarly
for the already-mentioned subsequent works
\cite{Pudlak:somerelations,%
Buss:BAandPH,%
Zambella:notes,%
CookThapen:replacement,%
CookKrajicek:NPsubsetPpoly,%
BKO:Consistency,%
PichSanthanam:StrongCoNondet,%
CKKO:LEARN,%
CKKOT:Hierarchy,%
LiOliveira:Unprovability}
proving independence
results (mostly conditional) for bounded arithmetic. 

The present paper only studies the Student-Teacher game 
as an interpretation of Herbrand's theorem. However, 
in concrete structures, one can also consider generalizations 
that do not correspond to directly to Herbrand games. 
One of these is the game where the strategy of Student is not oblivious. 
This means that knowing the value played by Teacher, 
Student can evaluate the atomic formulas in the structure,
and adaptively choose which node to play at and which term to play.

\section{The Herbrand game tree}\label{sec:HerbrandGameTree}

An ordered tree is a tree in which the children of each
node are ordered, i.e., there is a first child, second child, etc.
Trees that describe the order of play of Herbrand games have {\em all}
the nodes in the tree ordered, reflecting the order
of Student's moves. We call such trees \emph{totally ordered}:


\begin{definition}\label{def:totallyOrderedTree}
A \emph{totally ordered tree}~$T$ is a tree with a total order~$\prec$ on 
the set of all nodes of~$T$ such that for all nodes $u$ and~$v$ with
$v$ a child of~$u$, we have $u\prec v$.
\end{definition}

\begin{definition}\label{def:embedding}
Let $S$ and~$T$ be totally ordered trees. An \emph{embedding} of $S$ into~$T$
is an injective mapping~$\tau$ from $S$ to~$T$
that 
\begin{itemize}
\item $\tau$ maps the root of~$S$ to the root of~$T$, and
respects the total orders
of $S$ and~$T$; i.e., $u \prec_S v$ holds if 
and only if $\tau(u) \prec_T \tau(v)$ holds.
\item $\tau$ respects the parent/child relation;
i.e., $u$~is a child of~$v$ in~$S$ if and only if
$\tau(u)$~is a child of $\tau(v)$ in~$T$.
\end{itemize}
\end{definition}
The nodes of a totally ordered tree~$T$ with $n$~nodes 
can be (uniquely) enumerated as 
$u_0 \prec u_1 \prec u_2 \prec \cdots \prec  u_{n-1}$.
(Note that $u_0$ must be the root node.) We extend the total order to the edges by inserting edges between consecutive nodes
\begin{equation}\label{nodes-edges-order}
u_0 \prec e_1\prec u_1 \prec e_2\prec u_2 \prec \cdots \prec e_{n-1}\prec  u_{n-1},
\end{equation}
where $e_i$ is the edge connecting $u_i$ with its parent node (which can be $u_{i-1}$, but in general it is not). 

\begin{definition}\label{def:gametree}
Let $L$ be a first-order language (a {signature}) and $k\in\N$. 
A \emph{Herbrand game tree of depth~$k$} is a labeled,
totally ordered tree~$T$ with all branches of length~$k$ where
\begin{itemize}
\item[1.] The nodes of $T$ are labeled by distinct free variables.
\item[2.] The edges are labeled by terms in $L$ satisfying the condition that a term on $e$ can only contain free variables that are labels of nodes preceding~$e$ in the $\prec$~ordering.
\item[3.] For every node~$v$, the outgoing edges have different labels.
\end{itemize}
\end{definition}

The interpretation of this definition as a game is as follows. 
There are two players, Teacher and Student, 
who alternate by Teacher playing free variables and 
Student playing terms with variables being 
among the variables played by Teacher before. 
The order is given by the order of the nodes 
and edges as shown in~(\ref{nodes-edges-order}). 
Note that according to the definition, 
the first play is by Teacher to label the root node, and in subsequent rounds, 
Teacher must immediately play the end node of the edge played by Student. 
The explanation is that when Student plays the value on
an edge~$e$, the $\prec$ order means that Student has already played values
for all edges between~$e$ and the root; therefore an omniscient Teacher has 
sufficient information to play a value that falsifies the $\forall$-quantifier corresponding
to the node below~$e$ (if such a value exists).
The intuitive explanation of condition 2. is that Student can only play what she learned from Teacher. The explanation of condition 3. is that Teacher provides only one counterexample to each of Student's replies.
The intuition for Teacher playing only distinct variables is that
this is the most general possible value for Teacher to play. 
If Student has a winning strategy~$S$ that works against 
a Teacher who plays only distinct variables, then the
same strategy would also work against a Teacher who
plays terms, just by replacing the variables in 
the terms played by Student in the strategy~$S$ with the terms
that are actually played by Teacher.

These intuitions will be formalized in Appendix~\ref{sec:midsequent}, where we construct a game satisfying these conditions from any proof (this is the content of Theorem~\ref{thm:Herbrand} below).


\begin{definition}\label{def:winningStrategy}
Let $L$ be a language and $T$ a depth $k$ totally ordered tree with nodes labeled by distinct variables $z_1,z_2,\dots,z_N$. A \emph{strategy for Student} is any labeling of the edges~$\tau$
that satisfies the condition of the Herbrand game tree.

Let, furthermore, a formula $\varphi(y_0,x_1,y_1,\dots,x_k,y_k)$ in $L$ be given. Then $\tau$ is a \emph{winning strategy for Student
for~$\varphi$} provided that, letting $b$ range over the branches of~$T$ and
letting $z_{b_0},t_{b_1},z_{b_1}\dts t_{b_k},z_{b_k}$ be the 
labels on the nodes and edges of~$b$, the disjunction
\begin{equation}\label{e-winn}
\bigvee_b \varphi(z_{b_0},t_{b_1},z_{b_1}\dts t_{b_k},z_{b_k}),
\end{equation}
is a tautology. We will also consider Student-Teacher games in theories. Then we say that Student's strategy is a winning strategy in theory $\cal T$ if formula (\ref{e-winn}) is provable in~$\cal T$.
\end{definition}

{\small \paragraph{Remark.} The above definition is a technical concept that we will use. The natural concept of a strategy would involve not only what Student plays, but also \emph{where} she plays. So a strategy in that sense would include the tree and the total order on the tree.}

\medskip
The next theorem gives
the KPT Student-Teacher form of Herbrand's theorem for prenex formulas by characterizing
logical validity in terms of the existence of a Herbrand game tree
with a winning strategy.

\begin{theorem}\label{thm:Herbrand}
Let $L$ be a language and let $\Phi$ be an $L$-formula
\[
\Phi := \forall y_0\,\exists x_1\,\forall y_1\cdots\exists x_k\,\forall y_k\,\varphi(y_0,x_1,y_1,\dots,x_k,y_k),
\]
where $\varphi$ is quantifier free. Then $\Phi$ is logically 
valid if and only if there is a depth~$k$ Herbrand game tree~$T$
with a winning strategy for Student for~$\varphi$.
\end{theorem}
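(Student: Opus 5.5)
Soundness and completeness are handled separately. Soundness (a winning strategy implies validity) is a semantic argument. Completeness (validity implies a winning strategy) is obtained from the midsequent theorem, as the paper indicates for Appendix~\ref{sec:midsequent}.

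\emph{Soundness.} Suppose $T$ is a depth~$k$ Herbrand game tree whose labeling makes the disjunction~(\ref{e-winn}) a tautology, and suppose for contradiction that $\Phi$ fails in some structure~$M$. Then $M$ satisfies $\exists y_0\,\forall x_1\,\exists y_1\cdots\forall x_k\,\exists y_k\,\lnot\varphi$, so we may fix Skolem functions $f_0,f_1,\dots,f_k$ witnessing the $y_i$. We assign elements of~$M$ to the node variables $z_{u_0},z_{u_1},\dots$ in the order~$\prec$:
\begin{itemize}
\item the root variable receives $f_0$;
\item a node $u_j$ at depth $i$ receives $f_i$ applied to the values of the edge terms on the path from the root to~$u_j$.
\end{itemize}
This is well defined by induction along~$\prec$. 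Condition~2 of Definition~\ref{def:gametree} guarantees that each edge term on that path mentions only variables of nodes preceding the edge, hence preceding~$u_j$, and these have already been assigned. Under this assignment every disjunct $\varphi(z_{b_0},t_{b_1},\dots,t_{b_k},z_{b_k})$ is false, contradicting tautologyhood. The same argument works in any model of a universal theory~$\mathcal T$ when winning is defined by provability in~$\mathcal T$.

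\emph{Completeness.} Suppose $\Phi$ is valid. By the midsequent theorem, $\Rightarrow\Phi$ has a cut-free proof in two parts. Above the midsequent $\Rightarrow\Delta$ there are only propositional inferences, and each formula of $\Delta$ is an instance of~$\varphi$. Below it there are only $\exists$:right, $\forall$:right (eigenvariable) and contraction inferences. We build the tree from the bottom of the proof upward, maintaining an invariant: each sequent in the lower part is a multiset of formulas of the form $Q\,\psi(\dots)$, and each such occurrence corresponds to a node of the partial tree at the depth matching its quantifier prefix.
\begin{itemize}
\item A $\forall y_i$:right inference with eigenvariable~$z$ becomes a Teacher play labeling a node with~$z$.
\item An $\exists x_i$:right inference introducing term~$t$ becomes a Student edge labeled~$t$.
\item A contraction, read upward, duplicates an occurrence. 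This is how several outgoing edges arise from one node.
\end{itemize}
The total order~$\prec$ is the order in which the corresponding inferences are encountered reading the proof from the bottom up. The branches then correspond exactly to the formulas of~$\Delta$, so $\bigvee\Delta$ is a tautology and is the disjunction~(\ref{e-winn}).

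Three small normalizations are needed first.
\begin{itemize}
\item Permute inferences so each $\exists$:right on a formula is immediately followed, reading upward, by the $\forall$:right on the same occurrence. This makes Teacher answer at once, as the definition requires.
\item Merge occurrences that receive identical terms below the same node, giving condition~3.
\item Supply a constant where a term would otherwise be closed but none exists.
\end{itemize}

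The main obstacle is verifying condition~2: every variable in an edge term must label a node earlier in~$\prec$. Free variables of~$t$ that are not eigenvariables can be replaced by a constant (or by the root variable) without harming the tautology, since tautologies are closed under substitution. For eigenvariables, the eigenvariable condition is what does the work. If the $\forall$:right introducing~$z$ lies below the $\exists$:right introducing~$t$, then $z$ precedes $t$'s edge in~$\prec$, as needed. It remains to rule out the opposite order, where $t$ contains $z$ but the $\forall$:right for~$z$ lies above $t$'s $\exists$:right. In that case $z$ occurs free in~$t$, which appears in the lower sequent of the $\forall$:right. That violates the eigenvariable condition, so this order cannot occur. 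I expect this bookkeeping, done carefully together with the permutation step, to be the bulk of the work.
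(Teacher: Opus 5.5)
Your argument is correct in outline, but it is organized differently from the paper's. The paper (Appendix~\ref{sec:midsequent}) writes out only the direction ``valid $\Rightarrow$ winning tree'' and leaves the converse implicit. It works entirely inside the proof. It builds $P_1,\dots,P_r$, each split into an upper part $P'_\ell$ and a lower part $P''_\ell$ made of block inferences and $\exists$-block contractions, and it maintains the unique-auxiliary and immediate-$\forall$ conditions. Its two cases are exactly the repairs you need:
\begin{itemize}
\item Case~1 renames the eigenvariables of all $\forall$:right inferences on copies of one $\forall$-block formula to a single fresh $\vec u$, and replaces them by one $\forall$-block inference after contracting.
\item Case~2 pools all occurrences $\mathcal V$ of the lowest $\forall$-block formula, so that the corresponding Student play is made only once.
\end{itemize}
The payoff is a cut-free proof whose shape literally is the game tree, together with a uniform treatment of quantifier blocks (modification~c). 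You instead read a raw labeled tree off the unmodified lower part and repair it combinatorially. You also add a Skolem-function soundness argument, which is correct and supplies what the paper leaves unstated. For the single-quantifier statement, your route is lighter.

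Three adjustments are needed, though.
\begin{itemize}
\item \emph{The merge.} This is the step that carries real content, and it should be done on the tree, not on the proof. Two edges from one node with the same term $t$ can arise, e.g., from a contraction on $\forall y_i\,\psi(\dots,t,y_i)$; two roots can arise from a contraction on $\Phi$. Their children carry distinct variables $z\prec z'$. Merging means substituting $z$ for $z'$ everywhere, deleting $z'$ and its edge, re-hanging its children under $z$, and repeating until condition~3 holds. The disjunction stays a tautology, since it is a substitution instance. Condition~2 and parent-before-child are kept, since variables and subtrees only move to $\prec$-earlier nodes. The process terminates, since the node count drops. Doing this on the proof instead needs exactly the paper's renaming machinery.
\item \emph{The permutation.} This step is unnecessary. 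Definition~\ref{def:gametree} already places each edge immediately before its child, so condition~2 only has to be checked against the bottom-up order of $\forall$:right inferences, and your eigenvariable argument does that. The permutation also cannot always be carried out before merging, because a contraction on a $\forall$-formula can sit between an $\exists$:right and its $\forall$:right inferences.
\item \emph{Conditions 2 and 1.} The condition-2 check you call the main obstacle is short. The lower part has no weakening, so every free variable of a sequent occurs in every sequent above it. Hence an eigenvariable occurs in no sequent from the lower sequent of its $\forall$:right downward. The same fact shows that distinct $\forall$:right inferences have distinct eigenvariables, which gives condition~1; you did not address that condition.
\end{itemize}
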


A proof of Theorem~\ref{thm:Herbrand} is given in Appendix~\ref{sec:midsequent}.

For the sake of simplicity,
Definitions \ref{def:gametree}-\ref{def:winningStrategy} and Theorem~\ref{thm:Herbrand}
have been stated only for prenex formulas with quantifiers that start and end with $\forall$.
However, these are easily modified to accommodate prenex
formulas with any 
quantifier prefix.  The necessary modifications are obvious:
\begin{enumerate}
\item[\rm a.] If the sentence starts with $\exists$ the root is not labeled.
\item[\rm b.] If the quantifier block ends with $\exists$,
the leaves are not labeled since Teacher plays no values
for~$y_k$. In addition, we assume w.l.o.g.\ that the ordering~$\prec$
has all leaf nodes succeeding all internal nodes. This is justified
by the fact that Student gets no response from Teacher 
after playing a bottommost edges;
thus Student can wait to play those values at the end of the game 
without affecting any other aspect of the game. (See item~5.\ above.)
\item[\rm c.] If there is a block of existential (resp., universal) quantifiers, 
then the corresponding edges (resp., nodes) are labeled by multiple terms (resp., variables).
\end{enumerate}
Theorem~\ref{thm:Herbrand} still holds with these modifications. 

There are also modifications of trees that one could use.
In particular, one can consider a tree in which not all 
branches have full depth. This makes more sense when 
one uses the game in a concrete model. This paper will not consider such trees.

\paragraph{Copycat strategies.} A copycat strategy is 
a strategy in which one of the players always replies 
with the same move as their opponent. 
In the case of Herbrand games, 
it means that Student always plays the variable Teacher just played. Thus the strategy is uniquely determined by the order of the tree. 
Note, however, that since Student may play the variable at any edge, 
not necessarily at an edge going from the node where Teacher 
just played this variable, different $\prec$-orderings
give different copycat strategies.

In the context of Herbrand's theorem, a copycat strategy 
is something very special, because in the usual 
presentation of the Herbrand theorem function symbols are used 
and terms play important role. For languages with
no function symbols or constant symbols, 
Student has no choice of terms other
than to play some value (a variable) previously played by Teacher.
Even in this case, copycat strategies are something special,
since the copycat strategy restricts Student to playing the most recent value played
by Teacher.

For us, copycat strategies arise naturally in our proofs, and we use them
to slightly sharpen our results.

\begin{figure}[t]
\begin{center}
\begin{tabular}{|c|c|c|c|}
\hline
$b$ & $b$ & $b$ & $b$ \\
$a_1$ & $a_1$ & $a_2$ & $a_3$ \\
$b_1$ & $b_1$ & $b_2$ & $b_3$ \\
$a_{1,1}$ & $a_{1,2}$ & $a_{2,1}$ & $a_{3,1}$ \\
$b_{1,1}$ & $b_{1,2}$ & $b_{2,1}$ & $b_{3,1}$ \\
\hline
\end{tabular}
\end{center}
\caption{The matrix corresponding to the Herbrand game
tree of Figure~\ref{fig:treeExample}. \small The first, third, and fifth rows
contain moves by Teacher. The second and fourth rows contain moves
by Student. In this example,
the columns arranged according to the 
the left-to-right order of the tree. However, this is not
required.}
\end{figure}

\paragraph{Representation by matrices.} From 
Theorem~\ref{thm:Herbrand}, and its proof in Appendix~\ref{sec:midsequent}, 
we see that Herbrand game trees provide an intuitive, game-theoretic
method for justifying logical validity. 
We now introduce yet another representation that
represents labeled, totally ordered trees as matrices of terms. The reason is that it is easier to define such representations by first-order formulas than defining the Herbrand game trees, which we will need in the proofs of two main theorems.
However, Herbrand games trees still have an advantage
of representing the \emph{structure} of 
the Herbrand disjunction better than the matrix representation does.

The transformation from labeled trees to matrices is straightforward: 
the columns of the matrix are the branches of the labeled tree. 
Each row in the matrix corresponds either to a node or to an edge. 
The top row of the matrix corresponds to the first label along
branches in the tree (usually, the Teacher label on the root). 
The second row corresponds to the next label on a branch (usually 
the labels of edges from the root). In general, the $i$-th row contains
the $i$-th labels of branches in the tree.
If the root is not labeled, which is the case of a 
sentence starting with~$\exists$, we omit the first row, 
to avoid having an empty row; and similarly for the leaves.  
Note that the order of the columns in the matrix does not 
generally agree with the left-to-right order of branches 
in the Herbrand game tree. Since the order of columns is irrelevant, 
it would be more appropriate to talk about sets of 
(column) vectors, but when formalizing it we would 
still need to introduce some order because we want to 
index them by numbers.

The transition from a matrix back to a Herbrand game tree 
is straightforward: 
view the columns as paths, glue their initial parts together 
as much as possible, and add a root if needed.

To encode the order in which players played, 
we furthermore have to index the plays of Teacher. 
In general, this means that the matrix representation needs to include 
extra information
about the $\prec$-ordering.
However, in some cases, the order can be recovered 
without this indexing; in particular this is true 
for copycat strategies:
\begin{lemma}\label{lem:l-order}
    Suppose the quantifier prefix ends with~$\forall$. 
    When Student plays a copycat strategy, 
    i.e., always repeats Teacher's last played value, 
    the total order of the tree~$T$ 
    can be recovered without additional information, 
    even if we permute the columns of the matrix.
\end{lemma}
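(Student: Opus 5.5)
The plan is to show that in a copycat strategy each Student label is the Teacher variable labeling the immediately preceding node in the $\prec$-order. Under this identification, every edge label points back to that preceding node. So the matrix, even with its columns permuted, determines the successor relation $u_{i-1}\mapsto u_i$ on nodes, and hence the order.

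First I reconstruct the unordered labeled tree from the matrix, as described above: view the columns as paths, glue common initial segments, and add a root if needed. Because the prefix ends with $\forall$, every node, including the leaves, carries a Teacher label. If the prefix starts with $\exists$, the root is unlabeled, and I treat it as the unique minimal node $u_0$. Teacher's labels are distinct free variables, so each node is identified by its label. Definition~\ref{def:gametree}(3) guarantees that sibling edges carry distinct labels, so the gluing is unambiguous and the result does not depend on the order of the columns.

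Next I recover the order. Write the nodes as $u_0\prec u_1\prec\cdots\prec u_{n-1}$, with edges ordered as in~(\ref{nodes-edges-order}). In the copycat strategy, the edge $e_i$ is played right after Teacher labels $u_{i-1}$, so $e_i$ is labeled by the variable of $u_{i-1}$. (If the root is unlabeled, $e_1$ would have to be a closed term or null, and this case is handled by the convention that $u_0$ is the root.) Given the reconstructed tree, each non-root node $u$ has a unique incoming edge. The label of that edge names a unique node $p(u)$, and $p(u)$ is the predecessor of $u$ in $\prec$. The map $p$ is injective, because distinct nodes $u_i$ have distinct predecessors $u_{i-1}$. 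Starting from the root and iterating $p^{-1}$ therefore yields the sequence $u_0,u_1,\dots,u_{n-1}$, which is exactly $\prec$.

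The main obstacle is making sure this decoding is well defined. I must check that the root is the unique node that is not of the form $p(u)$, which holds because exactly one node, $u_0$, has no predecessor. I must also check that the last node $u_{n-1}$ is the unique node that is never named by an edge. This is where the hypothesis that the prefix ends with $\forall$ is needed. Without it the leaves would be unlabeled, and under convention~b the bottommost edges would be played at the end with no Teacher response, so the chain of predecessor links would break at the leaves. With leaf labels present, every node is labeled and every non-root node receives a copy, so the chain $u_0\to u_1\to\cdots$ is complete and the total order is recovered.
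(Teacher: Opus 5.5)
Your argument is correct and is essentially the paper's proof run backwards. The paper goes forward: after Teacher plays $y$, Student's next move is the edge where $y$ occurs on an $\exists$ row, and Teacher's next play is the node directly below it. You instead use the equivalent predecessor map $p(u_i)=u_{i-1}$ and iterate $p^{-1}$ from the root. One small slip in your last paragraph: the root is the unique node on which $p$ is \emph{undefined}, not the unique node outside the range of $p$ (that is $u_{n-1}$, as you say next), but this does not affect the argument.
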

\begin{proof}
When Student plays, Teacher's next play is directly below it 
in the matrix.
When Teacher plays a variable~$y$, Student's next 
move is where $y$ occurs on an $\exists$ row. This uniquely determines the order.
\end{proof}
Note that the opposite is trivially true: for a given total order on a tree, there is a unique way to play a copycat strategy.

\begin{corollary}\label{cor-matrix}
If Student plays a copycat strategy, then the corresponding matrix 
\emph{without its column ordering} completely determines the game.
\end{corollary}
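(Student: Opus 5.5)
The plan is to rebuild the labeled, totally ordered Herbrand game tree directly from the unordered set of columns. The three ingredients are the tree shape, the labels, and the total order~$\prec$; each should follow from either the gluing construction or Lemma~\ref{lem:l-order}.

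\emph{Step 1: labels and shape.} The columns are the branches of~$T$, each read from root to leaf, and they carry the node and edge labels. Following the transition described above, I glue the columns along their longest common initial segments. Two branches share a node exactly when their label sequences agree up to that node. The forward direction holds because labels are determined by position in the tree. The converse uses two facts: Teacher's labels are distinct free variables, so equal node labels mean equal nodes; and condition~3 of Definition~\ref{def:gametree} says distinct outgoing edges of a node carry distinct labels. Hence the prefix-gluing yields exactly the labeled tree~$T$ up to isomorphism, and the column order plays no role. In the copycat case the same reasoning applies, since each edge label equals some Teacher variable and no term equalities can be accidental.

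\emph{Step 2: the order.} Lemma~\ref{lem:l-order} states that, under a copycat strategy and a prefix ending with~$\forall$, the total order on~$T$ is recoverable even after permuting the columns. The procedure alternates two moves. After Student plays an edge, Teacher's next node is the one directly below it. After Teacher plays a variable~$y$, Student's next edge is the unique edge whose label is~$y$. This edge is unique because Student's plays are copies of distinct Teacher variables, each copied exactly once as the next move. Starting at the root, this alternation enumerates the nodes and edges in the order~(\ref{nodes-edges-order}), so $\prec$ is determined.

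\emph{Step 3: conclusion.} Together, the tree, its labels, and $\prec$ make up the whole game: the Herbrand game tree and Student's strategy, where the strategy is by definition the edge labeling. All of these are recovered from the unordered set of columns, which proves the corollary.

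The main obstacle is Step~1's gluing argument: showing that merging on equal prefixes neither identifies distinct nodes nor splits equal ones. The tool for this is the distinctness of node variables combined with condition~3. A secondary gap is the case where the prefix ends with~$\exists$. There I would either restrict to the lemma's hypothesis or use convention~b, under which leaf edges come last and their internal order does not matter.
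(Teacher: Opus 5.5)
Your proposal is correct and follows the paper's route. The paper treats the corollary as immediate from two ingredients: the column-gluing reconstruction of the tree described in the paragraph on matrix representations, and Lemma~\ref{lem:l-order}. These are exactly your Steps~1 and~2; your justification of the gluing (distinct Teacher variables plus condition~3) and your handling of the $\exists$-ending case via convention~b, which matches Lemma~\ref{lem:l-order-2}, fill in details the paper leaves implicit.
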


When the matrix corresponds to a Herbrand-game where Student plays a copycat strategy, 
then all the entries in matrix are variables played by Teacher. 
In such a case it will be more convenient to represent the matrix as follows. 
Let $z_1,z_2,\ldots, z_N$ be the variables played by Teacher. 
Then the matrix can be represented by the function $f(i,j)$ such that the matrix is $\{z_{f(i,j)}\}$;
that is, the entry in the $(i,j)$ position of the matrix is $z_{f(i,j)}$.
The function~$f$ captures the (copycat) strategy more naturally 
because it is irrelevant which variables we use to denote the moves of Teacher.

If the last quantifier is $\exists$ and Student plays in the last row of the matrix,
Teacher cannot respond in the same column. 
There are various possibilities for how to define the game
in this situation;
the simplest is that 
Student is allowed to play in the last row only at the very end of the game, 
and then she can play on arbitrary number of columns without 
alternating with Teacher. Intuitively it makes sense: 
by playing on the last row without a response from Teacher, 
she cannot get any new information. 
Hence she can postpone these moves to the very end. 
This corresponds to the fact that Student plays bottommost
edges only at the end of the Herbrand game in this situation
(per the discussion after Theorem~\ref{thm:Herbrand}).

With this modification the lemma trivially generalizes as follows.

\begin{lemma}\label{lem:l-order-2}
    Suppose the formula ends with $\exists$, 
    Student plays in the last row at the very end, uses a copycat strategy 
    except for the last moves on the last row. 
    Then the total order except for the order of Student's moves 
    on the last row can be recovered from the matrix without additional information.
\end{lemma}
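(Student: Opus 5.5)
The plan is to reduce to the argument of Lemma~\ref{lem:l-order}. The game splits into two phases. In the first phase, Student and Teacher alternate under the copycat rule, and every row except the last row is filled. In the second phase, Student fills the last ($\exists$) row without any Teacher responses. Since the matrix rows alternate between Teacher rows and Student rows, and the last row is a Student row, the submatrix $M'$ obtained by deleting the last row is exactly the matrix of a game whose prefix ends with~$\forall$. Columns of the original matrix that agree above the last row are merged in~$M'$, which corresponds to gluing the branches that share everything except their leaf edges. The first step is to check that the moves of the first phase, in their original order, form a legitimate copycat play on the tree determined by~$M'$. This holds because Teacher's responses only occur on rows above the last one. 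Also, the last-row moves, made at the very end, never influence the copycat choices: each copycat move by Student in the first phase copies the variable Teacher has just played, which is never a last-row entry.

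Next I apply Lemma~\ref{lem:l-order} to~$M'$. When Student plays an entry, Teacher's reply is the entry directly below it. When Teacher plays a variable~$y$, Student's next move is the unique position on an $\exists$ row of~$M'$ where $y$ occurs. This recovers the whole order of the first phase, hence the total order on all internal nodes and non-leaf edges of the tree, and it does not depend on how the columns are permuted. The last-row moves all come after everything in the first phase (this is the assumption that Student plays on the last row at the very end), so their relative position to the rest is known, and only their order among themselves is left undetermined. This matches the exception in the statement.

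The one point that needs care is uniqueness in the step ``Student's next move is where $y$ occurs on an $\exists$ row.'' A copy of~$y$ may also appear in the last row, since the final moves are arbitrary terms and might coincide with~$y$. That is exactly why the last row has to be removed before applying the argument. Inside~$M'$, each Teacher variable is copied at most once by the copycat rule, so its occurrence on an $\exists$ row of~$M'$ is unique; the exception is the final Teacher move of phase one, which is not copied at all. Condition~3 of Definition~\ref{def:gametree} ensures that merging columns to form~$M'$ does not create ambiguous duplicate edges. With these checks the recovery is deterministic and the lemma follows.
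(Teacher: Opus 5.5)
Your proposal is correct and is essentially the paper's argument: each Teacher value locates the unique non-last-row edge where Student copies it, and Teacher's next move sits at the lower endpoint of that edge, which is the proof of Lemma~\ref{lem:l-order} applied above the last row. Deleting the last row explicitly is a detail the paper leaves implicit; it matters because Teacher's variables can reappear there, as in the proof of Theorem~\ref{thm:arbitrary}. The same goes for your stopping rule at the one uncopied, final Teacher move of the first phase.
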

\begin{proof}
If Teacher plays $x$ on a node~$u$, Student plays~$x$ on some edge~$e$ because she plays a copycat strategy. So the position of the next Student's move is determined. Then Teacher has to respond on the bottom node of~$e$, hence his next move is also determined.
\end{proof}

\noteSam{Conventions:
Teacher ``responds'' or ``plays a value for a variable''.
Student ``proposes'' or  ``plays a value for a variable''.}

\section{Formulas hard for arbitrary game trees}\label{sec:arbitrary}

The next theorem states that every totally ordered tree~$T$ has a corresponding 
prenex formula~$\Phi$ such that any Herbrand game tree for~$\Phi$
with a winning strategy for Student is at least as complex as~$T$.
The tree~$T$ will have all leaves at depth~$k$. Since the nodes and edges
in~$T$ correspond to moves by Teacher and Student respectively, the
Student-Teacher game played on~$T$ would correspond to the question
of whether a prenex formula with $2k+1$ quantifiers blocks, of the
form
\begin{equation}{\label{eq:fmlaForT}}
\forall y_0\,\exists x_1\,\forall y_1 \,
      \cdots\,\forall y_{k-1}\,\exists x_k\,\forall y_k \, 
      \varphi
\end{equation}
with $\varphi$ quantifier-free, is logically valid. 

However, we are not able to
establish the existence of a formula exactly in the form~(\ref{eq:fmlaForT})
for every depth~$k$ totally ordered tree~$T$. Instead, we produce a
formula of the form
\begin{equation}\label{eq:fmlaTbis}
\Phi:=\forall y_0\,\exists x_1\,\forall y_1 \,
      \cdots\,\forall y_{k-1}\,\exists x_k\,\forall y_k \, \exists\vec z \,
      \varphi^*
\end{equation}
with an extra block $\exists \vec z$ of existential quantifiers and with
$\varphi^*$~quantifier-free.  Since there is no universal quantifier
in the scope of the $\exists \vec z$ quantifier, Teacher does not
reply to values that Student plays for~$\vec z$.  Therefore, 
Student can wait until
the Student-Teacher game is otherwise finished before
giving values for the existentially quantified $\vec z$ variables.
The game
ends after Student finishes giving values for the variables~$\vec z$
at all of the leaves of the tree.
In general, Student may play different strings of values for the
variables~$\vec z$ at different leaves, but in the winning strategy Student will play the same strings.


\begin{theorem}\label{thm:arbitrary}
Let $T$~be a totally ordered tree with all leaves at depth~$k$.
There is a logically valid formula~$\Phi$ of the form~{\rm (\ref{eq:fmlaTbis})}
with $\varphi^*$ quantifier-free, so that the following hold:
\begin{enumerate}
\item[\rm (a)] There is a depth~$k+1$ Herbrand game tree $T^*$ for $\Phi$ 
which consists of~$T$ plus one additional
edge below each leaf node of~$T$ (for Student's plays of values for the last block of 
existential quantifiers) so that
Student has a winning strategy~$S$. Furthermore, this strategy~$S$ is copycat.
\item[\rm (b)] Vice versa, suppose that $S'$ is a winning strategy for Student (not necessarily 
a copycat strategy)
on a Herbrand game tree~$T'$. Then there is an embedding~$\tau$ of~$T$ into~$T'$
such that the strategy~$S'$, when restricted to the subtree~$\tau(T)$, is identical to~$S$ except for the bottom part.
\end{enumerate}
\end{theorem}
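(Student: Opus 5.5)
We will build $\Phi$ so that its extra existential block $\exists\vec z$ forces Student to name the whole tree $T$. Enumerate the nodes of $T$ as $u_0\prec u_1\prec\dots\prec u_{n-1}$, write $d(i)$ for the depth of $u_i$, and $p(i)$ for the index of its parent. Introduce a binary relation symbol $E$ and one unary predicate $P_i$ for each $i<n$; the language has no function or constant symbols. Take $\vec z=(z_0,\dots,z_{n-1})$ and let $\varphi^*$ say that $\vec z$ witnesses a copy of $T$. More precisely, let $\varphi^*$ be the disjunction of the following:
\begin{itemize}
\item $\neg P_0(z_0)$ or $z_0\ne y_0$ (coded propositionally, since we avoid equality; see below);
\item for each $i\ge 1$, $\neg E(z_{p(i)},z_i)$ or $\neg P_i(z_i)$;
\item the \emph{closing} disjunct $\bigvee_{j\le k}\neg E(y_{j-1},x_j)\ \vee\ \bigvee_j\,\cdots$.
\end{itemize}
We will in fact replace this blunt version by the intended design, in which $\Phi$ says
\[
\text{``if }E\text{ is a functional edge relation along the played branch, then some }u_i\text{ is not realized.''}
\]
Concretely, the matrix will assert that along the branch $y_0,x_1,y_1,\dots,x_k,y_k$, Teacher's $y_j$ is an $E$-child of Student's $x_j$ (the clause $\neg E(x_j,y_j)$ makes every other branch trivially true), and that the plays $x_j$ must equal earlier Teacher values. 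To simulate dependence on order without equality, we use the predicates $P_i$ together with an order relation $<$ whose axioms, instantiated at the $\vec z$ values, appear as negated disjuncts. The disjunct $\neg(z_{i-1}<z_i)$ encodes $u_{i-1}\prec u_i$.

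For part (a), Student plays copycat in the order of $T$. The branch disjunction is then a tautology, because the clauses $E(z_{p(i)},z_i)$, $P_i(z_i)$ and $z_{i-1}<z_i$ with $z_i:=$ the variable at $u_i$ each appear negated in some branch instance. We check this by the matrix representation and Lemma~\ref{lem:l-order-2}: the resulting disjunction is exactly a chain of literals and their negations.

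For part (b), assume $S'$ on $T'$ is winning. Build the countermodel (Herbrand model) whose universe is the set of terms occurring in $T'$, with $E$ holding only between edge-term and child-node variable. Let $<$ be the order in which Teacher variables appear, and let $P_i$ be chosen adversarially. Since the disjunction is valid, it is true in this model, so some leaf's $\vec z$-instance satisfies all positive structure. This forces the terms $z_0,\dots,z_{n-1}$ to be Teacher variables forming a parent-respecting, $\prec$-respecting copy of $T$. That copy is the embedding $\tau$, and the copycat agreement follows because in this model $E(x,y)$ holds only if $x$ is literally the variable played.

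The main obstacle is the genericity argument in (b). We must choose the model, and in particular the interpretation of the $P_i$ and the assignment to the variables, so that satisfaction really does pin the $z_i$ to distinct Teacher variables in the correct order. This also has to rule out non-injective or order-violating witnesses. We would first try the free term model with $<$ as the play order and injectivity forced by the distinct $P_i$.
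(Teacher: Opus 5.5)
You never actually define $\Phi$. The ``blunt version'' of $\varphi^*$ has an elided disjunct and is then dropped for an ``intended design'' described only in words, so neither (a) nor (b) can be checked as written. Worse, the design cannot work as far as you specify it. Apart from the $<$-literals, every literal you write in $\varphi^*$ is a \emph{negated} $E$- or $P_i$-atom ($\neg E(x_j,y_j)$, $\neg E(z_{p(i)},z_i)$, $\neg P_i(z_i)$, \dots). The $<$-part $\chi(\vec z)$ (the disjuncts $\neg(z_{i-1}<z_i)$ and the negated order axioms) mentions only $\vec z$. Evaluate $\Phi$ in the two one-element structures in which $E$ and all $P_i$ are true. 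Every other disjunct is false there, so validity of $\Phi$ forces $\chi(y_0,\ldots,y_0)$ to be a propositional tautology in the single atom $y_0<y_0$. But then Student wins on a game tree with a \emph{single} branch by playing $z_i:=y_0$ for all $i$, so (b) fails for every $T$ with two or more leaves. If instead $\chi(y_0,\ldots,y_0)$ is not a tautology, $\Phi$ is not valid and (a) fails.

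The individual steps show the same polarity error. In (a), atoms that ``appear negated in some branch instance'' do not give a tautology; that needs complementary pairs, and making all $E$- and $P_i$-atoms true falsifies those disjuncts. In (b), truth of a disjunction of negated atoms in your term model only tells you some atom is \emph{false}; it cannot force ``all positive structure'' to hold. Also, in that model $E(z_{p(i)},z_i)$ only says that the edge above the node labelled $z_i$ carries the term $z_{p(i)}$. It says nothing about which node of $T'$ is that node's parent, and copycat puts the variable of $u_{i-1}$, not of $u_{p(i)}$, on that edge. Nor can any predicate in the matrix see the play order, since that order is a syntactic constraint on Student's terms.

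The missing idea is to let $\vec z$ occur \emph{only} in an antecedent whose atoms have the shape of whole branches, while the played branch occurs \emph{positively}. The paper uses one $(2k{+}1)$-ary predicate $P$. It takes $\Phi$ to be the prenex form of $\forall\vec z\,A(\vec z)\to\forall y_0\,\exists x_1\cdots\forall y_k\,P(y_0,x_1,\ldots,x_k,y_k)$, where $A(\vec z)=\bigvee_j P(z_{f(1,j)},\ldots,z_{f(2k+1,j)})$ lists the branches of the copycat play on $T$. Part (a) is then immediate. After copycat play, Student puts $\vec z:=\vec b$ on every leaf edge, and the Herbrand disjunction is equivalent to $A(\vec b)\to\bigvee_\ell P(b_{f(1,\ell)},\ldots,b_{f(2k+1,\ell)})$, whose consequent is literally $A(\vec b)$.

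For (b), Lemma~\ref{lem:l-propositional} shows that $\bigvee_\ell\bigl((\bigvee_j p_{j,\ell})\to q_\ell\bigr)$ is a tautology only if, for some $\ell_0$, every $p_{j,\ell_0}$ is syntactically one of the $q_\ell$. Since these atoms contain entire branches, this gives a label-preserving homomorphism of $T$ into $T'$. It is injective because copycat edge labels are distinct Teacher variables. It preserves $\prec$ because the image of the edge into $u_i$ carries the variable at the image of $u_{i-1}$, which must therefore already have been played.

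A path-following design with $E$ can be made to work, as in Section~\ref{sec:SecondLBproof}. It needs a positive disjunct $G_k(y_0,x_1,\ldots,x_k,y_k)\wedge\mathit{Time}(N(w),y_k)$, plus an axiom letting $G_\ell$ and $\mathit{Time}$ advance only when Student copies the most recent Teacher value below the correct parent. Your $P_i$ and $<$ provide no such link to the played branch.
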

The consecutive steps in $T$ do not have be consecutive steps in $T'$; 
Student can leave $\tau(T)$ and return there later. 
The size of~$\Phi$ will be comparable to the size of~$T$. 
This is unavoidable, because of 
the number of non-isomorphic totally ordered trees of a given size. 

Theorem~\ref{thm:arbitrary} is formally a consequence of Theorem~\ref{thm:mainPsiN} 
that we prove later,
however the formula used in it and the proof is much simpler. In
particular, 
it does not need the use of a fragment of
arithmetic.

\begin{proof}
Let $T$ be a totally ordered tree with $n$ leaf nodes, all at depth~$k$. 
Student's copycat strategy is determined by~$T$. 
The formula~$\Phi$ is formulated in the first-order language
with a single non-logical symbol, a $(2k{+}1)$-ary predicate
symbol~$P(y_0, x_1,\ldots, x_k,y_k)$. In this language, the only terms are variables and a copycat strategy does not use other terms if they are present anyway. However, part (b) holds even if we allow arbitrary constant and function symbols.

The formula~$\Phi$ is a
prenex form of a formula
\begin{equation}\label{eq:ExAImpP}
\forall \vec z \, A \rightarrow 
  \forall y_0\,\exists x_1\,\forall y_1 \,
      \cdots\,\forall y_{k-1}\,\exists x_k\,\forall y_k \,
    P(y_0, x_1,\ldots, x_k,y_k)
\end{equation}
where $A$~is a quantifier-free formula  
and $\forall \vec z A$ says in effect 
(loosely speaking) that
the copycat strategy is a winning strategy for Student for proving the formula~(\ref{eq:fmlaForT}). Essentially, the formula is a schema of reflection; a schema because $P$ is only a predicate symbol, not a formula. This also explains why it is logically valid, but we will prove it formally by showing that $S$ is a winning strategy for Student.  




According to Corollary~\ref{cor-matrix}, the tree, its total ordering, and Student's strategy~$S$
are determined by the representing matrix, in which a column corresponds to a branch in~$T$ and a row corresponds to a node of an edge on the branch. 
Let $N$ be the number of nodes in the tree, and $n$~be the number of branches. 
Teacher always plays a (new) variable, and these variables can be enumerated as
$z_1,\ldots,z_N$.
The order in which the variables are listed is not so important,  
but we assume for definiteness that they are listed 
in the order in which they are played. Since Student plays a copycat strategy, each
entry in the matrix is one of the $z_i$'s.
Therefore the matrix can be described by assigning variables to positions in the
matrix. For this, we use a function~$f$ with $f(i,j)$ equal to the index
of the variable placed in row~$i$ and column~$j$ of the matrix.
This function is determined by Student's strategy.
It is useful to interpret this function in terms of branches, and nodes and edges on the branches.
For an odd number~$i$, with $1\leq i\leq 2k{+}1$, and $1\leq j\leq n$, $f(i,j)$~is the index of the variable played by Teacher on the $((i{+}1)/2)$th node of the $j$th~branch. For $i$ even, $f(i,j)$ is the index of the variable played by Student on the $(i/2)$th edge of the $j$th branch. 
Hence, $z_{f(1,j)},\ldots, z_{f(2k+1,j)}$ is the sequence of variables 
that appear on the nodes and edges of the $j$th branch.

Define the formula $A$ to be
\[
A(\vec z) ~ := ~ \bigvee\nolimits_{j=1}^n P(z_{f(1,j)}\dts z_{f(2k+1,j)}).
\]
It is important to interpret this formula correctly: In $z_{f(i,j)}$, the \emph{expression} $f(i,j)$ is \emph{not} the index of $z$; the index of $z$ is the \emph{number} that $f(i,j)$ represents. For example, if $z_1$ is the label of the root, then $z_{f(1,1)}$ is this variable.
Think of~$A$ as expressing that the Herbrand game played on~$T$
yields a winning position for Student, provided Student was playing a copycat strategy.  

Define~$\Phi$ to be the prenex form of~(\ref{eq:ExAImpP}):
\[
\Phi:=\forall y_0\,\exists x_1\,\forall y_1
      \cdots\forall y_{k-1}\,\exists x_k\,\forall y_k\,\exists\vec z \,
           [ A(\vec z)\rightarrow P(y_0,x_1,y_1,\dots,y_{k-1},x_k,y_k) ].
\]

We start by proving part (a) of the theorem. 
Let $T^*$ be a tree defined by adding exactly one edge below
each leaf node; this makes $T^*$ depth~$k{+}1$, except the bottom
edges do not terminate in nodes where Teacher can respond. 
In the $\prec$-ordering of~$T^*$, the newly added leaves follow 
all the nodes of~$T$. We will show that Student has a winning strategy in which she first plays the copycat strategy~$S$ following the total ordering of~$T$, and then at the end of the game, 
plays on the bottommost edges in~$T^*$ in an arbitrary order.
On each of these edges, she will play the string of variables played by Teacher in the first part of the game. Thus the string will be the same for all these edges. The resulting strategy is copycat only until the phase when Student plays on leaves. Then Student copies what Teacher has played, but everything at once. We will show that this is a winning strategy.

Let $b_1\dts b_N$ be the variables played by Teacher, let $\vec b:=(b_1\dts b_N)$. Then on each bottommost edge, Student plays the vector $\vec b:=(b_1\dts b_N)$.
Since Student follows the given copycat strategy, in particular, the players follow the total order of~$T$, the values played on the $\ell$th branch of $T$ are $b_{f(1,\ell)}\dots,b_{f(2k+1,\ell)}$; hence the values played on the $\ell$th branch of $T^*$ are
\[
b_{f(1,\ell)},\dots,b_{f(2k+1,\ell)},\vec b.
\]
If we substitute these values into the quantifier free part of $\Phi$ we get the formula
\[
A(\vec b)\to P(b_{f(1,\ell)},\dots,b_{f(2k+1,\ell)}),
\]
which is
\[
\bigvee_{j=1}^n P(b_{f(1,j)},\dots,b_{f(2k+1,j)})\to
P(b_{f(1,\ell)},\dots,b_{f(2k+1,\ell)}).
\]
According to the definition, this defines a winning strategy for Student iff the disjunction of these formulas over all branches is a tautology; i.e., the following formula should be a tautology
\[
\bigvee_{\ell=1}^n\left(\bigvee_{j=1}^n P(b_{f(1,j)},\dots,b_{f(2k+1,j)})\to
P(b_{f(1,\ell)},\dots,b_{f(2k+1,\ell)})\right).
\]
This is, clearly, a tautology because it is equivalent to
\[
\bigvee_{j=1}^n P(b_{f(1,j)},\dots,b_{f(2k+1,j)})\to
\bigvee_{\ell=1}^n P(b_{f(1,\ell)},\dots,b_{f(2k+1,\ell)}).
\]
This establishes part~(a) of Theorem~\ref{thm:arbitrary}. 

\medskip
The next lemma will help with proving part~(b).
\begin{lemma}\label{lem:l-propositional}
Let $p_{j,\ell}$ and $q_j$, $\ell\in L,j\in J$, be propositional variables, not necessarily distinct. Then
\begin{equation}\label{eq:e-A}
\bigvee\nolimits_{\ell\in L} \Bigl( \Bigl (\bigvee\nolimits_{j\in J}p_{j,\ell} \Bigr) \rightarrow q_\ell \Bigr)
\end{equation}
is a tautology if and only if for some $\ell_0\in L$
\begin{equation}\label{eq:e-B}
\{p_{j,\ell_0}:j\in J\}\subseteq \{q_\ell:\ell\in L\}.
\end{equation}
\end{lemma}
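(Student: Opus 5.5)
We prove the two directions separately, treating (\ref{eq:e-A}) as a propositional formula in the atoms $p_{j,\ell}$, $q_\ell$, where coincidences among atoms are exactly those given by equality of the variable names. We assume $L$ and $J$ are finite, and for the direction ``only if'' also that $J\neq\emptyset$. (If $J=\emptyset$, both sides hold trivially for any $\ell_0\in L$, provided $L\neq\emptyset$.)

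\emph{Sufficiency.} Suppose (\ref{eq:e-B}) holds for $\ell_0$. Take any truth assignment $\alpha$. If $\alpha$ makes some $q_\ell$ true, then the disjunct indexed by that $\ell$ is true, since its consequent holds. Otherwise every $q_\ell$ is false. By (\ref{eq:e-B}), every $p_{j,\ell_0}$ is literally one of the $q_\ell$, so every $p_{j,\ell_0}$ is false. Hence the antecedent $\bigvee_j p_{j,\ell_0}$ is false, and the disjunct indexed by $\ell_0$ is true. In either case (\ref{eq:e-A}) is true under $\alpha$, so it is a tautology.

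\emph{Necessity, by contraposition.} Suppose that for every $\ell_0\in L$ there is some $j(\ell_0)\in J$ with $p_{j(\ell_0),\ell_0}\notin Q:=\{q_\ell:\ell\in L\}$. Define the assignment $\alpha$ that makes every variable in $Q$ false and every other variable true. This is consistent, because a variable is in $Q$ or not, and that is all $\alpha$ depends on.

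Under $\alpha$, consider the disjunct indexed by any $\ell$. Its consequent $q_\ell$ is false. Its antecedent contains $p_{j(\ell),\ell}$, which is not in $Q$ and hence true. So the disjunct is false. This holds for every $\ell$, so (\ref{eq:e-A}) is false under $\alpha$ and is not a tautology.

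The argument has no real obstacle. The only point needing care is the \emph{not necessarily distinct} clause: the falsifying assignment must be defined on variables themselves, not on index positions. Defining it by membership in $Q$ is exactly what handles coincidences between some $p_{j,\ell}$ and some $q_{\ell'}$.
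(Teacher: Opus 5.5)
Your proof is correct and follows the paper's approach: for the ``only if'' direction the paper uses the same falsifying assignment, making all $q_\ell$ false and making true, for each $\ell_0$, some chosen $p_{j,\ell_0}\notin\{q_\ell:\ell\in L\}$ (your assignment, false exactly on $\{q_\ell:\ell\in L\}$, is one completion of it). The paper leaves the converse to the reader as easy, and your case split on whether some $q_\ell$ is true handles it correctly.
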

\begin{proof}
Suppose that (\ref{eq:e-B}) is false for every~$\ell_0$. 
Then for each~$\ell_0$, we can pick $p_{j_{\ell_0},l_0}$ that is not in 
$\{q_\ell : \ell\in L\}$, set its truth value to {\em True}, 
and set the truth values of all $q_\ell$, $\ell\in L$, to {\em False}. 
This will make (\ref{eq:e-A}) false. 

The other direction (which we do not need) is easy and left to the reader.
\end{proof}

We now prove part~(b) of Theorem~\ref{thm:arbitrary}.
Consider a game tree~$T'$ for~$\Phi$ in which Student has a winning strategy
against Teacher who, according to the definition, always plays new variables.
Let $b_{i,\ell}, a_{i,\ell}$ be the values played
by Teacher and Student for the variables $y_i, x_i$ on the $\ell$-th branch (column
of the matrix representation)  with 
Student's winning strategy. Here $1\le \ell \le n'$ where $n'$ is the number 
of leaves of~$T'$, and $i\le k$. Let $c_{i,\ell}$, $i=1,\ldots,N$, be
the values played by Student in the final answer in the $\ell$-th branch 
giving values to the variables~$z_{i}$. 

The fact that Student wins the game means that the formula
\[
\bigvee\nolimits_{\ell=1}^{n'} \, \Bigl( \Bigl( \bigvee\nolimits_{j=1}^n \, 
      P(c_{f(1,j),\ell},\dots,c_{f(2k+1,j),\ell}) \Bigr)
 \rightarrow P(b_{0,\ell},a_{1,\ell},b_{1,\ell},\dots, a_{k,\ell},b_{k,\ell}) \Bigr)
\]
is a tautology,
where $n$ is still the number of leaf nodes of~$T$ and $n'$~is the number of leaf nodes of~$T'$.
By Lemma~\ref{lem:l-propositional}, this formula is a tautology if
and only if there are branches 
$\ell_0, \ell_1, \dots, \ell_n \le n'$ 
such that for each $j=1,\ldots,n$, 
\[
P(b_{0,\ell_j},a_{1,\ell_j},b_{1,\ell_j},\ldots,a_{k,\ell_j},b_{k,\ell_j})
\]
is equal to
\[
P(c_{f(1,j),\ell_0},\dots,c_{f(2k+1,j),\ell_0}).
\]
This means that labels on the $\ell_j$-th branch up to the last node before the added leaf are $c_{f(1,j),\ell_0},\dots,c_{f(2k+1,j),\ell_0}$. 
This defines a mapping from the matrix of~$T$ onto the submatrix of~$T'$ 
consisting of the columns $\ell_1\dts \ell_n$ and all rows
except for the last one. This, in turn, defines a
homomorphism from $T$ to the subtree of $T'$ consisting of 
branches $\ell_1, \ldots, \ell_n$ with their leaf edges removed. 
This homomorphism preserves the parent-child relationship in the trees.
It also preserves the terms labeling the nodes 
and edges of the tree. It will follow from the next lemma, that the
homomorphism preserves the tree ordering, but we do not assume this.
It is not immediately clear that the homomorphism is an embedding (i.e., is injective) 
because Student is not obliged to play different values for variables~$z_{i}$. 
So it is even not clear that the branches 
$\ell_0, \ell_1, \dots, \ell_n \le n'$  are different. 
However, it is not difficult to prove that the mapping must, indeed, be one-to-one. 
To this end, we prove: 

\begin{lemma}
No Herbrand game tree~$T$ labeled according to Student's copycat strategy
can be homomorphically embedded into a smaller Herbrand tree.
\end{lemma}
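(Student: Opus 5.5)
The plan is to show directly that the homomorphism $h$ from $T$ (labeled by Student's copycat strategy) into the Herbrand tree $T'$ is \emph{strictly increasing} with respect to the total orders. Injectivity then follows at once, and so does preservation of~$\prec$. I would use three facts about $h$, all available from the construction just before the lemma:
\begin{itemize}
\item $h$ preserves the parent/child relation.
\item $h$ sends the label $z_i$ of a node or edge of $T$ to the corresponding label $c_{i,\ell_0}$ of the image node or edge. The substitution $z_i\mapsto c_{i,\ell_0}$ is uniform across the whole tree.
\item In $T'$, as in any Herbrand game tree (Definition~\ref{def:gametree}), the nodes carry pairwise distinct free variables. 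Moreover, a variable can occur in an edge label only if the node carrying that variable precedes the edge in the order~(\ref{nodes-edges-order}) of~$T'$.
\end{itemize}

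Enumerate the nodes of $T$ as $u_0\prec u_1\prec\cdots\prec u_{N-1}$, with $u_i$ labeled by the Teacher variable $z_{i+1}$. The key step uses the copycat property. For each $m\ge 0$ with $m+1<N$, the edge $e_{m+1}$ entering $u_{m+1}$ (from its parent $u_p$, $p\le m$) is labeled by the variable just played by Teacher, namely the label $z_{m+1}$ of $u_m$. Apply $h$. The image node $h(u_m)$ is labeled by the term $c_{m+1,\ell_0}$. Since it is a node of $T'$, this term is a single Teacher variable of $T'$, and by distinctness of node labels $h(u_m)$ is the \emph{unique} node of $T'$ carrying it. The image edge $h(e_{m+1})$, from $h(u_p)$ to $h(u_{m+1})$, carries the same term $c_{m+1,\ell_0}$. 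By condition~2 of Definition~\ref{def:gametree}, the node carrying this variable precedes that edge in $T'$. In the order (\ref{nodes-edges-order}) of $T'$ an edge immediately precedes its lower endpoint, so
\[
h(u_m)\prec_{T'} h(e_{m+1})\prec_{T'} h(u_{m+1}).
\]

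Chaining these inequalities for $m=0,\dots,N-2$ gives $h(u_0)\prec_{T'} h(u_1)\prec_{T'}\cdots\prec_{T'} h(u_{N-1})$. Hence $h$ is strictly monotone, so it is injective and respects $\prec$. It also maps the root to the root, since $u_0$ is the $\prec$-least node and its image is an ancestor of every image node. Therefore $h$ is an embedding in the sense of Definition~\ref{def:embedding}. In particular, $T$ cannot be mapped homomorphically onto a smaller tree: the branches $\ell_1,\dots,\ell_n$ are distinct and the image has exactly as many nodes as~$T$.

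The point I expect to need the most care is the claim that the image label $c_{m+1,\ell_0}$ on a node is a single variable, and that the identical term on the edge forces the order relation. This rests on two things: the homomorphism is defined by one uniform substitution $z_i\mapsto c_{i,\ell_0}$, and node labels in $T'$ are always Teacher's fresh, distinct variables. I would also check the boundary cases separately: the root, where there is no preceding edge, and the last node $u_{N-1}$, whose variable is copied only onto the final $\exists\vec z$ edges. Neither case is needed for the chain of inequalities.
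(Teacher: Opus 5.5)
Your argument is correct, but it takes a different route from the paper.

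\textbf{The paper's route.} The paper argues by minimal counterexample. It takes the $\prec$-first node $v$ of $T$ that collides with some $u\neq v$ under $h$, and notes that $v$ is not the root. The parent edges of $u$ and $v$ carry distinct copycat variables, yet both would have to equal the label of the single parent edge of $h(u)=h(v)$. That argument yields only injectivity. Preservation of $\prec$ is then obtained separately, and only briefly, by appeal to the recoverability of the order from copycat labels (Corollary~\ref{cor-matrix} and Lemma~\ref{lem:l-order-2}).

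\textbf{Your route.} You use condition~2 of Definition~\ref{def:gametree} in the \emph{target} tree, together with distinctness of node variables there, to get $h(u_m)\prec h(u_{m+1})$ directly. So injectivity and order preservation come out in one stroke.

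\textbf{What each buys.} Your version also handles the fact that $h$ transports labels through the substitution $z_i\mapsto c_{i,\ell_0}$ rather than preserving them literally. The paper's one-line contradiction tacitly needs distinct copycat variables to have distinct images. That is true, because node labels of $T'$ are distinct and $v$ is $\prec$-minimal, but the paper does not say it. In exchange, the paper's argument does not depend on the temporal condition~2. Read with literal label preservation, it works for homomorphisms into any labeled tree, whereas yours needs the target to be a genuine Herbrand game tree.

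\textbf{A small fix.} $h(u_0)$ being an ancestor of all image nodes does not by itself make it the root of $T'$. The root-to-root property really comes from $h$ preserving depth: row $i$ of the matrix of $T$ is sent to row $i$ of the matrix of $T'$.
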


\begin{proof}
We argue by contradiction. Suppose there is a homomorphism mapping~$T$ into a smaller
tree~$S$. Let $v$ be the first node in the $\prec$ ordering of~$T$ such that
for some $u\not= v$, $h(v) = h(u)$. Since $h$ preserves edges, $v$~cannot be the root.
Let $a$ and~$c$ be the labels of the parent edges of $v$ and~$u$, respectively. 
These represent plays by Student using the copycat strategy on~$T$; hence, 
$a$ and~$c$ are distinct variables $b_{i,j}$ played by Teacher. 
This is a contradiction since they both must equal
the label on the parent edge of $h(v)=h(u)$ in~$S$. That proves the lemma.
\end{proof}

Thus there is an embedding of the labeled tree $T$ into the labeled tree~$T'$. 
Now, recall that the ordering can be reconstructed from the labels 
when they come from a copycat strategy 
(see Corollary~\ref{cor-matrix} and Lemma~\ref{lem:l-order-2}). 
Hence, the $\prec$ ordering is also preserved by the embedding.
This finishes the proof of the theorem.
\end{proof}

\section{Small formulas requiring large Herbrand game trees}\label{sec:LargeTrees}

Theorem~\ref{thm:arbitrary} established that given an arbitrary totally ordered
tree~$T$, there is a formula~$\Phi$ for which the Herbrand game tree
is at least as complex as~$T$. The formula~$\Phi$, however, has size
approximately the same as the size of~$T$.  We now consider the question
of giving small formulas~$\Phi$ such that any Herbrand game tree is
large and complex.  By ``large'', we show that the size of the game
tree cannot be computably bounded in the size of~$\Phi$.  By ``complex'',
we mean that the $\prec$-ordering on the tree can be arbitrarily complex.
(An explicit example of this is shown in Appendix~\ref{sec:UnivTrees}.)
To make this formal, we assume that $k>0$ and that $T(r)$ is a partial computable function
that outputs (codes for) totally ordered trees. When $T(r)$ converges, it outputs
a totally ordered depth~$k$ tree, denoted~$T_r$.
We shall prove that, when $T_r$ exists, there is a
small formula~$\Phi_r$ which requires a Herbrand game tree at least as
complex as~$T_r$.  The size of~$\Phi_r$ will be polynomial in~$r$. 
Hence in general
there is no computable function bounding the size of~$T_r$ in
terms of the size of~$r$.

Sections \ref{sec:FirstLBproof} and~\ref{sec:SecondLBproof} will give
two different proofs that there are short formulas which require large,
complex Herbrand game trees. The first one, in Section~\ref{sec:FirstLBproof},
is based on the idea of Theorem~\ref{thm:arbitrary}, including
constructing a formula $\exists z\, A$ that asserts the existence of 
correct play of a copycat game.  The second proof uses a more ``brute-force''
approach where Student and Teacher are supposed to follow a path
in a directed graph defined by a relation~$E(\cdot,\cdot)$; the length
of the path cannot be generally bounded by a computable function.
Theorem~\ref{thm:arbitrary} added an extra existential quantifier block~($\exists \vec w)$,
increasing the Herbrand game query depth from $2k{+}1$ to $2k{+}2$. 
The first proof (Section~\ref{sec:FirstLBproof}) instead adds two
quantifiers blocks, increasing the Herbrand game query depth to $2k{+}3$.
The second proof (Section~\ref{sec:SecondLBproof}) uses pure first-order
instead of working relative to a universal theory, and adds only a single,
existential quantifier block.

\subsection{An arithmetic theory \protect{$\mathcal T$}} \label{sec:TheoryT}

For both proofs, 
it will be useful to work with a (weak) first-order
theory~$\mathcal T$ of the integers
to express properties about the
trees~$T_r$ in the range of a partial computable function.  The theory~$\mathcal T$ must
be axiomatized with a finite set of universal axioms. 
For convenience, we can take $\mathcal T$~to use the language
of the bounded arithmetic
$T^0_2(\MSP)$~\cite{Jerabek:sharplybounded}, namely,
$0$, $S$, $+$, $\cdot$, $\#$, $\le$, $| \cdot |$, $\lfloor \frac12 \cdot \rfloor$,
and~$\MSP$.  (This is the language of~$S^1_2$~\cite{Buss:bookBA} plus
the $\MSP$ symbol for ``most significant part'',
where $\MSP(x,i) := \lfloor x/2^i \rfloor$.) 
However, using the language of $T^0_2(\MSP)$
is only a convenience, not crucial.   The axioms
of~$\mathcal T$ can be taken to be a finite set of universal axioms
that describe the basic properties of the function symbols, e.g.,
as in~\cite{Jerabek:sharplybounded}, with some additional axioms and non-logical symbols
as described next.

The crucial properties of $\mathcal T$ needed for our proofs are:
\begin{itemize}
\item[\rm (1)] The language of $\mathcal T$~contains (among other symbols)
the constant~$0$,
the unary successor function~$S$, and the seven axioms of Robinson's theory~$Q$.
For $n\in \mathbb N$, the notation~$\overline n$ denotes
the canonical term $S(S(\cdots S(0)\cdots ))$ with $n$ applications of~$S$
that has value~$n$.  $\mathcal T$~also includes the $\le$~relation
and, for each $n\in \mathbb N$, $\mathcal T$~proves
\begin{equation}\label{eq:Raxiom}
\forall x \, ( x \le \overline n 
   \, \liff \,  
          x=0 \lor x = \overline 1 \lor x = \overline 2 \lor 
         \cdots \lor x = \overline n ) .
\end{equation}
Thus any model of~$\mathcal T$ contains an initial segment
which is a copy of the standard integers.

For any closed (i.e., variable-free) term~$t$, 
$\mathcal T \vdash t =\overline n$, 
where $n\in\mathbb N$ is the value of~$t$.

\item[\rm (2)] There is a term $\langle x,y \rangle$ that defines ordered pairs
and two terms $\TPr_1$ and~$\TPr_2$ so that,  
$\mathcal T$~proves
\[
 \forall x \, \forall y \, (\TPr_1(\langle x, y\rangle ) = x 
  \,\land\, \TPr_2(\langle x,y\rangle) = y) .
\]
If $x$ is not an ordered pair, we define $\pi_1(x)=\pi_2(x)=\overline 0$.
Since $\langle x, y\rangle$ is a term, we have that, for any integers $m,n$, 
$\langle m , n \rangle \in \mathbb N$, and $\mathcal T$~proves
$\langle \overline m, \overline n \rangle = \overline {\langle m ,n \rangle}$.

\item[\rm(3)]  Nested use of the pairing function $\langle \cdot, \cdot \rangle$ can be
used to code sequences $z_0,\ldots,z_{k-1}$ of arbitrary fixed length $k \in \mathbb N$.
The idea is to encode this sequence with the value
$\langle z_0, \langle z_1, \langle z_2, \ldots \langle z_{k-2}, z_{k-1} \rangle \cdots \rangle \rangle \rangle$.
The language of~$\mathcal T$~includes a binary function symbol $\Tbeta(a,z)$
and the two axioms 
\[
  \forall z \, (\Tbeta(0,z) = \TPr_1(z) ) \quad \hbox{and} \quad \forall x\, \forall z\, (\Tbeta(S(x),z) = \Tbeta(x,\TPr_2(z))) .
\]
From this, $\mathcal T$~proves, for each fixed $k>0$,
\[
\forall z_0 \,\cdots\, \forall z_{k-1} \, \exists z \, \Bigl(
   \bigwedge\nolimits_{i=0}^{k-1} \Tbeta(\overline i, z) = z_i \Bigr).
\]
Note that $\beta(x,z)$ is always defined, but for a concrete $n \in \mathbb N$, the infinite sequence defined by $\beta(x,\overline n)$ will be zero from some index on.
\item[\rm (4)] \label{page:recQf} 
Every computable predicate $R(x_1,\ldots,x_k)$ can be defined
in~$\mathcal T$ with a quantifier-free representation as follows: 
There is a quantifier-free formula $\phi_R(z, x_1,\ldots,x_k)$
so that, for all $n_1,\ldots,n_k \in \mathbb N$, 
\begin{itemize}
\item There is a unique $m\in \mathbb N$  so that
${\mathcal T} \vdash \phi_R(\overline m, \overline n_1,\ldots, \overline n_k)$,
\item For this~$m$, $\mathbb N \vDash R(n_1,\ldots,n_k)$ holds if and only if
$\TPr_1(m)=0$ (and thus if and only if 
$\mathcal T\vdash \TPr_1(\overline m)=0$,
and if and only if $\mathcal T \nvdash \lnot \TPr_1(\overline m)=0$).
\end{itemize}
\item[\rm (4$'$)] Every computably enumerable predicate $R(x_1,\ldots,x_k)$
can be defined
in~$\mathcal T$ with a quantifier-free representation as follows: 
There is a quantifier-free formula $\psi_R(z, x_1,\ldots,x_k)$
so that, for all $n_1,\ldots,n_k \in \mathbb N$, 
\begin{itemize}
\item $R(n_1,\ldots,n_k)$ holds if and only if
$\mathbb N \vDash \psi_R(m, n_1,\ldots, n_k)$
for some $m \in \mathbb N$;
and hence if and only if 
$\mathcal T \vdash \psi_R(\overline m, \overline{n_1},\ldots,\overline{n_k})$ 
for some~$m \in \mathbb N$.
\end{itemize}
\end{itemize} 
Since the pairing and projection functions can be used to
code fixed length tuples, and $\mathcal T$ extends Robinson's
theory~$Q$, the formulas $\phi_R$ and~$\psi_R$ of conditions (4) and~(4$'$) 
can be obtained with the aid of Diophantine representations
of computable predicates with polynomials over~$\mathbb N$ (or, alternatively
using formulas over the language of~$T^0_2(\MSP)$).

Note that conditions (4) and~(4$'$) are akin to numeralwise realizability of~$R$;
however, we use a
modified notion of numeralwise realizability with the inclusion of~$m$ to be able to
to use quantifier-free formulas for $\phi_R$ and~$\psi_R$.

Conditions (2) and~(3) can be realized by adding additional function
symbols and their defining equations to the language of~$\mathcal T$. 
If, however, $\mathcal T$ includes the language of $T^0_2(\MSP)$, 
no additional function symbols are needed.\footnote{%
For readers familiar with $T^0_2(\MSP)$: 
In $T^0_2(\MSP)$, a pair $\langle m,n \rangle$ can be encoded
as $w = 2^{2 \ell} + m\cdot 2^\ell + n$ where $2^\ell > \max\{m, n\}$.
E.g., 
defining $Ell(a,b) := |a+b|$ and $TwoEll(a,b) := (a+b) \# 1$ (so
$TwoEll(a,b) = 2^{Ell(a,b)} > max\{a,b\}$) and then
$\langle a,b \rangle := TwoEll(a,b)^2 + TwoEll(a,b)\cdot a + b$.
The projection functions can be defined by letting
$Ell'(w) := \lfloor |w|/2 \rfloor$ and
$TwoEll'(w) := \MSP(\lfloor w/2\rfloor, Ell'(w)) \# 1$ (so 
$TwoEll'(w) = 2^{Ell'(w)})$ and then defining
$\TPr_1(w) := \MSP(w,Ell'(w)) - TwoEll'(w)$ and
$\TPr_2(w) := w - \MSP(w, Ell'(w))\cdot TwoEll'(w)$.
}

The language of~$\mathcal T$ needs to be rich enough so that there
are {\em quantifier-free} formulas and terms that express properties of
the tree~$T_r$ generated by the partial computable function $T(r)$.
The output of~$T(r)$ (when defined) is an integer~$w$
such that $w$~encodes the totally ordered tree~$T_r$ in a canonical form.
Claim~\ref{clm:qfMethod} below will describe how to pad $w=T(r)$ with 
additional information so that the following
relations and functions are definable in~$\mathcal T$ by terms and by 
quantifier-free formulas.  The padded form of $w=T(r)$ is denoted~$w_r$
and the mapping $r \mapsto w_r$ is also partial computable.
\begin{itemize}
\item The relation $W(r,w)$ holds if and only if $w = w_r$.
\item The number of nodes of $T_r$ is equal to $N_r := N(w_r)$.
\item The number of leaves of~$T_r$ is equal to $n_r := n(w_r)$.
\item The ``parent'' relation on the nodes of~$T_r$ is defined
by letting $\Tparent(i,j,w_r)$ be true exactly
if node~$u_i$ of~$T_r$ is the parent of node~$u_j$ in~$T_r$.
Here $u_i$ means the $i$-th node in the $\prec$-ordering of
the nodes of~$T_r$.
\end{itemize}

Note that in $\cal T$ we can represent any concrete finite structure because we can code sequences. Thus we can define each $T_r$ and argue about its properties, strategies, etc. In particular, for every fixed $r$, we can prove that $f(\overline r,i,j)$ defines a copycat strategy on (the formalization of) $T_r$.

\begin{claim}\label{clm:qfMethod}
Conditions {\rm (1)-(4$'$)} above are
sufficient to ensure that, with a suitable way of encoding the trees~$T_r$
with a $w_r \in \mathbb N$, the theory~$\mathcal T$ has quantifier-free formulas
defining $W(\cdots)$ and $\Tparent(\cdots)$, 
and terms that define the functions $N(w_r)$
and~$n(w_r)$.
\end{claim}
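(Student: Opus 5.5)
The idea is to make $w_r$ a \emph{self-describing} code: $w = T(r)$ is padded with enough explicitly listed data that every relation we need becomes a bounded lookup via $\Tbeta$. Concretely, I would let $w_r$ be the nested-pair code
\[
w_r = \langle N_r, \langle n_r, \langle p, \langle c_W, c_T\rangle\rangle\rangle\rangle,
\]
where $p$ is the sequence (coded via $\Tbeta$) whose $j$th entry is the index $i$ of the parent of $u_j$. Here $c_T$ is the computation witness, i.e.\ the number $m$ from condition (4$'$) certifying, via $\psi_R$, that the graph $R(r,w)$ of $r\mapsto T(r)$ holds. The map $r\mapsto w_r$ is then partial computable: run $T(r)$, search for a witness $m$, and compute $N_r$, $n_r$, and $p$ from the output.

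Next, I would define the terms and the $\Tparent$ formula. With $N(w) := \TPr_1(w)$ and $n(w) := \TPr_1(\TPr_2(w))$ we get terms, and by (1) and (2) $\mathcal T$ proves $N(\overline{w_r}) = \overline{N_r}$. For the parent relation I would set
\[
\Tparent(i,j,w) :\equiv \Tbeta(j,\TPr_1(\TPr_2(\TPr_2(w)))) = i \land 0<j .
\]
This is quantifier-free, and for numerals $\mathcal T$ decides it correctly, because closed terms evaluate to numerals by (1), and (3) gives the $\Tbeta$ values for codes built from nested pairs.

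For $W(r,w)$ I would take the conjunction of two parts. The first is $\psi_R(c, \overline r, w_0)$, where $c$ and the raw output $w_0$ are extracted from $w$ by projection terms; this ensures $w_0 = T(r)$. The second is a quantifier-free consistency check that $N$, $n$, and $p$ were correctly computed from $w_0$. The check is a computable predicate $C(w_0,N,n,p)$, so by (4) it has a representation $\phi_C(m',\ldots)\land \TPr_1(m')=0$ with a witness $m'$ that I would also store in $w$. Uniqueness of $w_r$ is then enforced by the uniqueness of $m'$ in (4). For (4$'$) I would choose the canonical least witness $m$, and make $R$ itself encode ``$m$ is the least witness'' as a computable condition, so that $W$ has exactly one solution $w_r$.

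The main obstacle is exactly this step: ensuring $W(r,w)$ is a single quantifier-free formula that holds (and is provable for numerals) for \emph{exactly} $w=w_r$, even though $T$ is only partial. The approach of embedding witnesses into $w_r$ and using a computable ``least witness'' predicate via (4) is what I would try first. The truth in $\mathbb N$ is then immediate, and $\mathcal T$-provability for numerals follows from (1), since closed quantifier-free statements are decided by evaluating the closed terms to numerals.
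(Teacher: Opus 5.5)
Your proposal is correct and rests on the same core idea as the paper. You pad $T(r)$ with the derived data and with witnesses, so that $W(r,w)$ becomes a conjunction of quantifier-free witness relations applied to projections of~$w$, while $N$, $n$ and $\Tparent$ become projection and $\Tbeta$-lookups. The decomposition differs, though.

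\begin{itemize}
\item The paper applies (4$'$) four times, to the c.e.\ graphs $r\mapsto T(r)$, $r\mapsto N_r$, $r\mapsto n_r$ and $r\mapsto v_{\Tparent}$. The last of these is a characteristic array indexed by $\langle i,j\rangle$. It stores one witness for each and explicitly leaves $w_r$ non-unique, noting that this is harmless for the later uses of~$W$.
\item You apply (4$'$) only to the graph of~$T$. You then certify $N,n,p$ against the stored output $w_0$ through a single decidable check represented via~(4). Finally, you make the (4$'$)-witness canonical by a least-witness condition, which is decidable and so can again be certified via~(4).
\end{itemize}

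The paper's route is shorter and uses only~(4$'$). Yours costs a few more stored certificates, but it delivers the literal requirement that $W(r,w)$ hold if and only if $w=w_r$. It also uses a more compact parent-index array.

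Some bookkeeping needs tidying.
\begin{itemize}
\item Your displayed tuple $\langle N_r,\langle n_r,\langle p,\langle c_W,c_T\rangle\rangle\rangle\rangle$ has no slot for $w_0$ or for the (4)-certificates, and $c_W$ is never explained.
\item For literal uniqueness, add a conjunct saying that $w$ equals the nested pair rebuilt from its extracted components. Since non-pairs project to~$0$, distinct codes could otherwise share all projections.
\item $\Tparent$ should carry a guard $j<N(w)$. Beyond the end of the coded sequence $\Tbeta(j,p)=0$, which would otherwise make $u_0$ the ``parent'' of nonexistent nodes. The paper's own encoding has the same blemish.
\end{itemize}
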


In particular, with the ``suitable'' encoding~$w_r$ of the Claim, 
the graph of the function $r\mapsto w_r$ is defined
by the quantifier-free $W(r, w_r)$ and hence the function $w \mapsto w_r$ is partial computable.
\begin{proof}
Let $w_r^-$ denote~$T(r)$ (when defined). Since $T(\cdot)$~is partial computable,
the set~$G_T$ of pairs $(r, w_r^-)$ is computably enumerable. By (4$'$), there is
a quantifier-free $\psi_{G_T}(m,r,w)$ so that for $r,w$, we have $(r,w) \in G_T$
if and only if there is some $m\in \mathbb N$ such that $\psi_{G_T}(m,r,w)$ holds.
Likewise, restricting $r$'s to be in the domain of~$T$,
the set~$G_N$ of pairs $(r,N_r)$ and the set~$G_n$ of pairs $(r,n_r)$ 
are computably enumerable. Hence there are quantifier-free formulas $\psi_{G_N}$
and $\psi_{G_n}$ such that, for all $r,n$, 
$\psi_{G_N}(m,r,n)$ holds for some~$m\in \mathbb N$ if and only
if $n = N(r)$, and similarly for $G_n$ and~$n$.

For $\Tparent$, we work with a sequence encoding all the
truth values of $\Tparent(i,j,w_r)$: Consider the set~$G_\Tparent$ of pairs $(r,v)$ such that $T(r)$ exists and
such that for all $i, j < N(r)$, $\Tbeta(\langle i, j \rangle, v) = 0$
if and only if $u_i$~is the parent node of~$u_j$ in~$T_r$. This set is computably enumerable;
hence there is a quantifier-free formula $\psi_\Tparent(m, r, v)$ such that
$(r,v)$ is in this set of pairs if and only if, for some~$m$, $\psi_\Tparent(m,r,w)$ holds.

For values~$r$ such that $w_r^- = T(r)$ exists, we wish to define $w_r$ as a sequence
of four pairs:
\begin{equation}\label{eq:wDefn}
\langle ~ \langle m_W, w_r^- \rangle, \, \,
   \langle m_N, N_r \rangle, \, \,
   \langle m_n, n_r \rangle, \, \,
   \langle m_\Tparent, v_\Tparent \, \rangle ~ \rangle.
\end{equation}
The condition that $w=w_r$, namely that $w$ has this form, can be expressed as the conjunction
\begin{equation}\label{eq:wrCondition}
\psi_T(m_W, r, w_r^-) \,\land\, 
    \psi_N(m_N,r,N_r) \,\land\, 
    \psi_n(m_n,r, n_r) \,\land\, 
    \psi_\Tparent(m_\Tparent, r, v_\Tparent) .
\end{equation}
Since the constituents $m_N, w^-_r, m_n, \ldots$ of~$w_r$ can be extracted with 
the terms using $\TPr_1, \TPr_2, \Tbeta$, the property that $w$ has the correct form~(\ref{eq:wDefn})
satisfying~(\ref{eq:wrCondition}) can
be expressed as a quantifier-free~$T_r$ formula $W(r, w)$.  As written, the $w_r$ so defined
by $(r, w_r)$
may not be unique: this could be rectified by taking the least such value~$w$, 
but since it causes no problems, we leave $W(r, w_r)$ in its present form.

Suppose $W(r,w_r)$ holds. The functions $N(w_r)$ and~$n(w_r)$~can be defined as
the terms $\TPr_2(\Tbeta(1, w_r))$ and $\TPr_2(\Tbeta(2, w_r))$.
Similarly, $\Tparent(i,j,w_r)$
can be defined as the quantifier-free formula
$\Tbeta(\langle i,j \rangle, \TPr_2(\Tbeta(3,w_r)))=0$.
\end{proof}

In the next sections, we state and prove the two theorems about the complexity of Herbrand games. Both theorems use extensions of $\mathcal T$ with new function and predicate symbols designed for particular sequences of totally ordered trees~$\{T_r\}$.

In the first theorem, the theory~$\mathcal T$ is augmented
to form a theory~$\ThTPf$ which consists of~$\mathcal T$
plus a $(2k{+}1)$-ary predicate~$P(y_0,x_1,\ldots, x_k, y_k)$,
a ternary function $f(r,x,y)$ representing trees in a given computable sequence~$\{T_r\}$,
and a ``Skolemizing'' function~$f^*$ that allows $\ThTPf$ to be universal. 
Recall that a Herbrand game tree in which Student plays a copycat strategy is fully determined 
by a function $f:[2k+1]\times[n]\to Z$, where $k$ is the depth of the tree, 
$n$~is the number of leaves, and 
$Z$~is a string of free variables of length~$N$, where $N$ is the number of nodes. 
In~$\ThTPf$, we will formalize it as a function of three variables 
$f(r,i,j)$ mapping $[2k+1]\times[n]$ to the interval $[1,N]$; the value of~$f$
is equal to the index of a free variable. 
The ``free variables'' will be a string of arbitrary elements 
encoded by an element~$z$. 

Since $\{T_r\}$ is computable, $f$~is also computable.  Thus, by
condition~(4) on page~\pageref{page:recQf}, there is a 
quantifier-free formula~$\psi_f$ such that
\[
f(r,x,y)=z ~\Leftrightarrow~ \exists m\, \psi_f(m,r,x,y,z) .
\]
Accordingly, the theory~$\ThTPf$ is defined as $\mathcal T$ plus two axioms
\[
\forall r \, \forall x\, \forall y\, \forall z\, 
      [ f(r,x,y)=z ~\liff~ \psi_f(f^*(r,x,y), x, y, z) ]
\]
and 
\[
\forall m\, \forall r \, \forall x\, \forall y\, \forall z\, 
      [ \psi_f(m,x,y,z) ~\rightarrow~ \psi_f(f^*(r,x,y),x,y,z) ].
\]
Here $f^*$ is a new 3-ary function symbol that skolemizes the 
$\exists m$ quantifier for~$\psi_f$.
Note $\ThTPf$ is universal.

By the properties of~$\mathcal T$ and since $\psi_f$ is
quantifier-free, for all $r,m,n,p\in\N$,
\[
\ThTPf\vdash\ f(\overline r,\overline m,\overline n)=\overline p\quad\mbox{ iff }\quad f(r,m,n)=p.
\]
There are no axioms about~$P$ in $\ThTPf$ other than the equality axioms.


The second proof uses theories~$\ThTG_r$, for $r\in \mathbb N$, that have a predicate $\TN(x)$ that defines
a number sort
satisfying the theory~$\mathcal T$, and in addition have
new predicates
$G_\ell(\cdots)$, $\TTm(\cdot,\cdot)$ and~$E(\cdot, \cdot)$ and some associated axioms. $\ThTG_r$~will 
be described later in Section~\ref{sec:SecondLBproof}.

\subsection{Short formulas for large Herbrand trees I}\label{sec:FirstLBproof}

This section gives the first construction of formulas for every computable sequence of totally ordered trees. For every such sequence of trees $T_1, T_2, T_3, \ldots$, we construct a formula $\Phi(r)$ such that the formulas $\Phi(\overline r)$ have a similar relation to the trees as in Theorem~\ref{thm:arbitrary} in the previous section. The size of $\Phi(\overline r)$ increases very slowly with increasing~$r$, whereas the size of the trees~$T_r$ may grow as fast as any computable function. The idea of the proof is to use the same formula as in Theorem~\ref{thm:arbitrary}, but represent the part that depends on the tree succinctly. To this end we need a theory in which we can code a sequence by a single number. We will use the theory~$\ThTPf$ introduced in the previous section.

The key part of the formula used in Theorem~\ref{thm:arbitrary} was the disjunction
\[
\bigvee\nolimits_{j=1}^n P(z_{f(1,j)}\dts z_{f(2k+1,j)}),
\]
where $n$ was the number of branches of the tree we wanted to represent. Since we need a concise representation, we must use an existential quantifier instead of the disjunction. This quantifier will be in the antecedent of an implication. Hence it will become a universal quantifier in the prenex form of the formula.

\begin{theorem}\label{thm:firstLBproof}
For every computable sequence of totally ordered trees $\{T_r\}$, it is possible to construct a formula $\Psi(r)$ of the form 
\begin{equation}\label{eq:minusPsi_r}
\forall y_0\,\exists x_1\,\forall y_1\,\cdots\,\exists x_k\,\forall y_k\,\exists z\,\forall u\,
    \varphi(r,y_0,x_1,y_1,\ldots,x_k,y_k,z,u)
\end{equation}
such that 
the following holds for every $r\geq 1$.
\begin{enumerate}
\item[\rm (a)] There is a Herbrand game tree~$T$ of depth~$k+1$ with a winning strategy
for~$\Psi(\overline r)$ provably in~$\ThTPf$ such that the first $k$ levels of~$T$, including the total order, are identical to~$T_r$.
Consequently, $\Psi(\overline r)$ is provable in~$\ThTPf$.
\item[\rm (b)] Vice versa, suppose $T'$ is a totally ordered tree 
that admits a winning strategy~$S'$ for~$\Psi(\overline r)$ provably in~$\ThTPf$.
Then there is an embedding~$\tau$ of $T_r$ into~$T'$, 
and $S'$ is the same as~$S$ when restricted to the subtree $\tau(T_r)$.
\end{enumerate}
\end{theorem}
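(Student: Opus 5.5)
We follow the proof of Theorem~\ref{thm:arbitrary}, replacing the explicit disjunction $A(\vec z)$ by a succinct quantified statement inside $\ThTPf$. The function $f(r,i,j)$ gives the copycat matrix of $T_r$, and a single element $z$ codes the string of Teacher's variables via $\Tbeta$. So we let $\Psi(r)$ be a prenex form of
\[
\forall z\,\bigl[\exists u\,( u<n(r) \land P(\Tbeta(f(r,\overline1,u),z)\dts\Tbeta(f(r,\overline{2k+1},u),z)))\bigr]\rightarrow \forall y_0\exists x_1\cdots\exists x_k\forall y_k\, P(y_0,x_1\dts x_k,y_k),
\]
where $n(r)$ is a term (obtainable as in Claim~\ref{clm:qfMethod}, or folded into $f$) giving the number of leaves. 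Moving $\forall z$ and $\exists u$ outward through the implication turns them into $\exists z\,\forall u$ placed innermost, which gives the form (\ref{eq:minusPsi_r}). The matrix $\varphi$ is quantifier-free in the language of $\ThTPf$ because $f$ is a function symbol.

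\textbf{Part (a).} Student plays the copycat strategy on $T_r$, so Teacher's variables are $b_1\dts b_N$. At each leaf Student plays on the extra edge a single term $z:=t(b_1\dts b_N)$, namely a nested pairing term with $\Tbeta(\overline i,t)=b_i$, which $\ThTPf$ proves by property~(3). Teacher then answers with a fresh $u_\ell$ on branch $\ell$. Winning means that $\ThTPf$ proves the disjunction over $\ell$ of
\[
\bigl(u_\ell<\overline{n_r}\land P(\Tbeta(f(\overline r,\overline1,u_\ell),t)\dts)\bigr)\rightarrow P(b_{f(1,\ell)}\dts b_{f(2k+1,\ell)}).
\]
To prove it, reason in $\ThTPf$ as follows. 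By (\ref{eq:Raxiom}), $u_\ell<\overline{n_r}$ forces $u_\ell=\overline j$ for some standard $j$. Each $f(\overline r,\overline i,\overline j)$ provably equals the correct numeral, so the antecedent becomes $P(b_{f(1,j)}\dts b_{f(2k+1,j)})$, which is the consequent of branch $j$. The disjunction therefore follows by a case analysis on the $u_\ell$. This works because the same $z$ is played at every leaf. (The extra $\forall u$ level means $T$ has depth $k+1$ plus Teacher leaf nodes; the statement's ``depth $k+1$'' should be read this way.)

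\textbf{Part (b).} This is the main obstacle, since the winning condition is now provability in $\ThTPf$ rather than tautologyhood, so Lemma~\ref{lem:l-propositional} does not apply directly. The plan is to build a countermodel. Suppose no branch $\ell_0$ has all $n_r$ atoms $P(\Tbeta(\overline{f(i,j)},c_{\ell_0}),\dots)$ occurring, up to provable equality, among the consequents $P(b_{0,\ell},a_{1,\ell},\dots)$. Take a model of $\mathcal T$ (with $f,f^*$ interpreted by their true values) whose domain contains the free variables as generic elements, for instance a term/free model of the $P$-free part. Interpret $P$ as true exactly on the tuples $\Tbeta(\cdot,c_{\ell_0})$ witnessing failure for each $\ell_0$, and false on every consequent tuple; consistency of this choice needs the atoms to be distinct as elements. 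Choosing the Teacher values $u_\ell$ to be the corresponding numerals $\overline j$ falsifies the disjunction. To make this rigorous, I would use the fact that $P$ is uninterpreted, so provability in $\ThTPf$ reduces to propositional entailment modulo the equalities provable in the $P$-free theory (Herbrand for the universal theory $\ThTPf$ without $P$-axioms). Substituting numerals $\overline j$ for the $u$'s, which is allowed because the $u$'s are universally quantified free variables, and simplifying $f$, yields exactly the situation of Lemma~\ref{lem:l-propositional}, with atoms identified up to equality in a generic model of $\mathcal T$.

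From this we obtain branches $\ell_1\dts\ell_{n_r}$ whose labels equal, in that generic model, the $\Tbeta(\overline{f(i,j)},c_{\ell_0})$. This gives a label-preserving homomorphism of $T_r$ into $T'$. Injectivity follows by the argument of the homomorphism lemma in the proof of Theorem~\ref{thm:arbitrary}: Teacher's variables are distinct free variables, hence distinct in the generic model. Order preservation then follows from Corollary~\ref{cor-matrix} and Lemma~\ref{lem:l-order-2}.
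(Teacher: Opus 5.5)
Your part (a) is essentially the paper's argument. Part (b), however, has a genuine gap, and it sits exactly where the new $\forall u$ block makes the situation differ from Theorem~\ref{thm:arbitrary}.

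In $T'$ the bottom nodes at which Teacher plays $u_1,\dots,u_{N'}$ are interleaved in the $\prec$-order with the rest of the play. So Student's later terms may contain earlier $u$'s, for instance through a selector such as $\Tbeta(u_1,\cdot)$. This applies both to the codes $Z_\ell$ on later leaf edges and to labels $a_{i,\ell}$ on edges higher up in $T'$. Hence neither the $j$ witnessing failure for a branch $\ell_0$ nor the set of consequent tuples is determined until the $u$'s have values. But you choose the values of the $u$'s from those failure witnesses, which is circular. Substituting $\overline{j_1}$ for $u_1$ can make the antecedent tuple chosen for a later branch equal, in the model, to some consequent tuple, and then your assignment of $P$ is inconsistent.

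For the same reason the reduction to Lemma~\ref{lem:l-propositional} does not go through. Once every $u_\ell$ is replaced by a numeral, each antecedent is a single atom rather than a disjunction over $j\in J$. If instead you keep the case split $\bigvee_j u_\ell=\overline j$, the remaining terms still depend on $u_\ell$, so the formula is not of the lemma's form.

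The missing idea is an adaptive Teacher argument that follows the order of play in $T'$.
\begin{enumerate}
\item Take, by compactness, a model of $\ThTPf$ (with $P$ fixed only at the end) in which no Teacher variable $b$ equals a term in the other $b$'s.
\item Assign numerals to $u_1,u_2,\dots$ one at a time, in the order they are played. At step $h$ the earlier $u$'s are already fixed, so $Z_h$ is determined.
\item If for some $j$ the tuple $(\Tbeta(\overline{f(1,j)},Z_h),\dots,\Tbeta(\overline{f(2k{+}1,j)},Z_h))$ equals no branch tuple $(b_{0,\ell},a_{1,\ell},\dots,a_{k,\ell},b_{k,\ell})$, pick such a $j_h$ and set $u_h:=\overline{j_h}$.
\end{enumerate}
You then need a persistence claim: the inequality obtained at step $h$ survives all later substitutions. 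This holds for three reasons.
\begin{enumerate}
\item The tuple from $Z_h$ involves only variables played before $u_h$.
\item A branch whose last node was played before $u_h$ has a tuple unaffected by later substitutions.
\item A branch finished after $u_h$ ends in a fresh $b_{k,\ell}$, which by independence cannot equal a term in earlier variables.
\end{enumerate}
Only with this claim does ``the procedure never stops'' yield a countermodel: make $P$ false on all consequent tuples and true elsewhere. Contraposing, some $\ell_0$ has all its tuples matched, which gives the embedding.

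Incidentally, a term model of a universal theory whose axioms contain disjunctions need not satisfy those axioms. So your ``generic model'' should also come from compactness rather than from free terms.
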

Recall that given a total order on a tree, there is a unique copycat strategy that uses this order. We will use such a strategy in~(a). In~(b) we will also use a copycat strategy $S$, but in what sense $S'$ is the same as $S$ needs an explanation. Strategy $S'$ restricted to $\tau(T_r)$ is only \emph{copycat with respect to theory} $\ThTPf$, which means that Student plays a \emph{term provably equal in $\ThTPf$ to the variable played by Teacher}. It is clear that it is impossible to force Student to play the same variable when $\ThTPf$ has nontrivial terms $t(x)$ such that $t(x)=x$ is provable.

In this theorem we only consider games in theories~$\ThTPf$ 
specially constructed for each computable sequence of trees~$\{T_r\}$. 
In the next subsection~\ref{sec:SecondLBproof}, we will present another 
construction that works already in pure logic. 
(It is also based on a special theory, $\ThTG$, but in that construction, 
the axioms needed are incorporated into the formula.)

\begin{proof}
The idea of the proof is, essentially, the same as in Theorem~\ref{thm:arbitrary}.
The formula~$\Psi(r)$ is a formalization of a certain form of the reflection principle, the statement that 
the existence of a Herbrand game tree with a winning strategy for Student 
implies the truth of
\begin{equation}\label{eq:e-sentence}
\forall y_0\,\exists x_1\,\forall y_1\,\cdots\,\exists x_k\,\forall y_k\,P(y_0,x_1,y_1,\dots,x_k,y_k),
\end{equation}
Now the statement is not for 
an arbitrary totally ordered tree, but only for trees in a particular sequence~$\{T_r\}$.

Let $r$ be given and consider the tree~$T_r$. Recall that in~$\mathcal T$, hence also in~$\ThTPf$, we have the function symbols for denoting the number of leaves~$n(r)$ and the number of nodes~$N(r)$. We also have a function symbol~$\Tbeta$ for coding sequences; $\Tbeta(i,z)$~is the $i$th element of the sequence coded by~$z$.
Furthermore, we have a binary function symbol $f(r,i,j)$ for the function that represents a copycat strategy in $T_r$ played in the given total order of~$T_r$.  To improve readability,
we will henceforth suppress the parameter~$r$, and write $f(i,j)$ instead of~$f(r,i,j)$.

We define $\varphi(r,y_0,x_1,y_1,\ldots,x_k,y_k,z,u)$, the quantifier free part of $\Psi(r)$, by
\[
[u {\leq} N(r)\wedge P(\Tbeta(f(\overline 1,u),z),\ldots,\Tbeta(f(\overline{2k{+}1},u),z))]\to
P(y_0,x_1,y_1,\dots,x_k,y_k).
\]

To prove part (a), let $r$ be fixed, set $T$ to be~$T_r$ with one edge added to every leaf of~$T_r$, and let the total order be extended so that the new edges are after all nodes and edges of~$T_r$. Student's strategy will be to play first the copycat strategy determined by the total order on~$T_r$ and then play $z$ that encodes the values played by Teacher in the previous part of the game. Formally, she will play the same term 
$\zeta:=\langle z_1, \langle z_2, \ldots \langle z_{N-1}, z_{N} \rangle \cdots \rangle \rangle$ on every leaf edge, where $z_1\dts z_{N}$ are the values played by Teacher in the previous part of the game, and $N=N(r)$. 
Let $u_j$ be the variable played by Teacher on the leaf of branch~$j$ (after Student played $\zeta$ on the leaf edge connected to it). Then the resulting formula on branch~$j$ has the form
\begin{equation}\label{e-imp}
\begin{aligned}\relax
[u_j {\leq} N(\overline r)\wedge P(\Tbeta(f(\overline 1,u_j),\zeta),\Tbeta(f(\overline 2,u_j),\zeta),&\ldots,\Tbeta(f(\overline{2k{+}1},u_j),\zeta))]\to\\
&P(z_{f(1,j)},z_{f(2,j)},\dots,z_{f(2k+1,j)}).
\end{aligned}
\end{equation}
Note that in the antecedent, $f(\overline i,u_j)$ are terms in the language of $\ThTPf$, while in the consequent, $f(i,j)$ are only indices of variables $z_1\dts z_N$. 
To prove that this strategy is winning, we need to show that the disjunction of these formulas for $j=1\dts N$ is provable in $\ThTPf$.

Since $\ThTPf$ proves $N(\overline r)=\overline{N(r)}$, we also know that $\ThTPf$ proves
\[
u_j\leq N(\overline r)\ \equiv\ \bigvee_{\ell\leq N(r)} u_j=\overline\ell.
\]
Thus the antecedent of (\ref{e-imp}) is equivalent to
\[
\bigvee_{\ell\leq N(r)} [u_j=\overline\ell\wedge P(\Tbeta(f(\overline 1,\overline\ell),\zeta),\ldots,\Tbeta(f(\overline{2k{+}1},\overline\ell),\zeta))].
\]
Using the properties of~$\beta$ and the definition of term~$\zeta$, we can further show that the formula is equivalent to
\[
\bigvee_{\ell\leq N(r)} [u_j=\overline\ell\wedge P(z_{f(1,\ell)},z_{f(2,\ell)},\dots,z_{f(2k+1,\ell)})].
\]
This disjunction trivially implies 
\[
\bigvee_{j\leq N(r)} P(z_{f(1,j)},z_{f(2,j)},\dots,z_{f(2k+1,j)}),
\]
hence the disjunction of formulas (\ref{e-imp}) is provable in~$\ThTPf$. This finishes the proof of part~(a).

\bigskip
We will now prove part (b). As in the proof of (a), we will closely follow the argument in the proof of Theorem~\ref{thm:arbitrary}. Let $T'$ be an arbitrary Herbrand game tree with Student's winning strategy $S'$, which does not have to be copycat. Let $N'$ be the number of leaves of~$T'$. For $\ell=1\dts N'$, let
\[
b_{0,\ell},a_{1,\ell},b_{1,\ell}\dts a_{k,\ell},b_{k,\ell},Z_\ell,u_\ell
\]
be the terms played on branch~$\ell$, where $b_{i,\ell}$ and~$u_\ell$ are variables. The assumption that $S'$ is a winning strategy means that the disjunction of formulas
\bel{e-imp2}
[u_\ell {\leq} N(\overline r)\wedge P(\Tbeta(f(\overline 1,u_\ell),Z_\ell),\ldots,\Tbeta(f(\overline{2k{+}1},u_\ell),Z_\ell))]\to
P(b_{0,\ell},a_{1,\ell},b_{1,\ell}\dts a_{k,\ell},b_{k,\ell})
\ee
is provable in~$\ThTPf$. In this formula, 
the variables $u_\ell$ and~$b_{i,\ell}$ denote free variables, 
and the $a_{i,\ell}$'s are terms.
We can replace the antecedent in the same way as in part~(a) with 
\[
\bigvee_{j\leq N(r)} [u_\ell=\overline j\wedge P(\Tbeta(f(\overline 1,\overline j),Z_\ell),\ldots,\Tbeta(f(\overline{2k{+}1},\overline j),Z_\ell))].
\]
We would like to use Lemma~\ref{lem:l-propositional}, but the situation is a bit more complicated because of Teacher's playing on the bottom nodes. Nevertheless, we will use the same principle.

We will construct a model~$M$  of theory~$\ThTPf$ based on the game tree~$T'$.  This model will
have interpretations for each of the variables $b_{i,j}$ and~$u_\ell$ played by Teacher during 
the play of a game.  The values of the variables $b_{i,j}$ are fixed ahead of
time. By compactness, we can require that for any such
variable~$b_{i_0,j_0}$, there is no term~$t$ involving 
only the rest of the variables~$b_{i,j}$ 
such that $b_{i_0,\ell_0} = t$ holds in~$M$.  
In general, a single $b$~value played by Teacher appears at multiple places
in the matrix, so we should be more precise: for any pair $(i_0,j_0)$,
there is no term~$t$ that involves only the free variables
$b_{i,j}$ such that $f(i,j) \not= f(i_0,j_0)$ such that
$t = b_{i_0,j_0}$ is true in~$M$.
The variables~$u_\ell$ will be assigned
values in~$M$ during the course of a run of the Student-Teacher game: these will
be assigned a (standard) integer value~$j_\ell \in \mathbb N$, and thereby 
are represented by the closed term~$\overline j_\ell$.

In addition, the model~$M$ has
an interpretation for the predicate~$P$. Since there are no axioms that 
apply to~$P$ other than the equality axioms, we can interpret $P$ in $M$ arbitrarily, 
except that we have to satisfy the axioms of equality in~$M$.


Suppose, w.l.o.g., that Teacher plays on the bottom nodes in the order 
given by the indices of~$u_\ell$. In other words, the total order in~$T'$ 
on the bottom nodes is $u_1\prec u_2\prec\dots\prec u_{N'}$.  (If not,
just renumber the branches of~$T'$.)
We abbreviate the 
terms $\Tbeta(f(\overline i,\overline j), Z_\ell)$ 
as~$\tau^\ell_{i,j}$. 

The Student-Teacher game can be viewed as being played in the model~$M$, where
the variables $b_{i,j}$ and~$u_\ell$ played by Teacher and the terms played by
Student represent elements in the model~$M$.  The values of~$b_{i,j}$ are set
before the game begins.
The other variables $u_\ell$ played by Teacher will be assigned values $j_\ell \in \mathbb N$
during the course of the game. Student plays terms $a_{i,j}$ or $Z_\ell$: initially
these may involve variables~$u_{\ell'}$, but at the stage where Student plays
a term $a_{i,j}$ or $Z_{\ell}$, all of the variables~$u_{\ell'}$ appearing
in that term will have been replaced with some standard integer term~$\overline j_{\ell'}$.
Therefore, Student always plays a term in which the only free variables
are values~$b_{i',j'}$ that were played by Teacher in earlier steps.

By assumption,
Student has a winning strategy; furthermore, this strategy must work
when playing in~$M$ since $M\vDash \ThTPf$.
Our aim is to find an~$\ell_0$ with the property: 
\ben
\item[(*)]  for every $j=1\dts N$, 
$(\tau^{\ell_0}_{1,j},\ldots,\tau^{\ell_0}_{2k+1,j})$ 
is equal to some $(b_{0,\ell},a_{1,\ell},b_{1,\ell}\dts a_{k,\ell},b_{k,\ell})$ 
in the model~$M$, where $\ell\le N'$.
\een

Consider the following procedure in which we try successively $\ell_0=1,2,\dots$. Suppose $\ell_0=1$ does not have property~(*). So there is some $j_1$, $1\leq j_1\leq N$ such that $(\tau^{1}_{1,j_1},\ldots,\tau^{1}_{2k+1,j_1})$ is not equal to any $(b_{0,\ell},a_{1,\ell},b_{1,\ell}\dts a_{k,\ell},b_{k,\ell})$. Then we set $u_1:=\overline j_1$ and proceed to $\ell_0=2$, for which we do the same thing, except that now $u_1$ in the terms $a_{i,j}$ and~$Z_\ell$ 
is substituted by~$\overline j_1$. We continue in this way until we find an $\ell_0$ with property (*), or define such $j_\ell$'s and set $u_\ell=\overline j_\ell$ for all $\ell=1,\ldots, N$. 

\begin{claim}
For every $h$ for which $j_h$ is defined, if $(\tau^{h}_{1,j_h},\ldots,\tau^{h}_{2k+1,j_h})$ is not equal  to $(b_{0,\ell},a_{1,\ell},b_{1,\ell}\dts a_{k,\ell},b_{k,\ell})$ at the step~$h$ of the procedure, then it remains unequal after all subsequent substitutions $u_h:=\overline j_h,u_{h+1}:=\overline j_{h+1},\dots$.
\end{claim}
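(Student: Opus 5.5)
The plan is to exploit the fact that substituting a standard numeral for a variable $u_{h'}$ can only specialize terms: it never creates new equalities in $M$ except those that already hold uniformly in the remaining variables. Precisely, fix $h$ and suppose that at step~$h$ the tuple $\vec\tau^h := (\tau^{h}_{1,j_h},\ldots,\tau^{h}_{2k+1,j_h})$ differs in~$M$ from the tuple $\vec\sigma^\ell := (b_{0,\ell},a_{1,\ell},\dots,a_{k,\ell},b_{k,\ell})$ for some~$\ell$. We show that the inequality persists under the later substitutions $u_{h}:=\overline j_h,\ u_{h+1}:=\overline j_{h+1},\dots$.

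The key observation concerns which variables can occur in these terms. The bottom node $u_h$ is played by Teacher after Student plays $Z_h$ on the leaf edge above it. Since $u_1\prec u_2\prec\cdots$, the term $Z_h$ can contain only $b$-variables and $u_1,\dots,u_{h-1}$, and these have already been replaced by numerals by step~$h$. Hence $\tau^h_{i,j_h}=\Tbeta(f(\overline i,\overline{j_h}),Z_h)$ is a term in the $b$-variables alone.

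For the other side, we use the convention, justified by item~5 and modification~b after Theorem~\ref{thm:Herbrand}, that every node in rows $0,\dots,k$ of $T'$ precedes every leaf node $u_\ell$. Under this convention each $a_{i,\ell}$ with $i\le k$ is played before any $u$ and so contains no $u$-variables. If the order of $T'$ interleaves some $u$'s earlier, the argument still goes through provided the procedure is run along the actual $\prec$-order of $T'$. In that case the terms $a_{i,\ell}$ evaluated at step~$h$ contain only variables already fixed, or variables $u_{h'}$ with $h'\ge h$ that have not yet been substituted.

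The comparison at step~$h$ is an equality in~$M$ between terms whose only unassigned symbols are variables. We therefore interpret ``equal at step~$h$'' as equality in~$M$ for the given values of the $b$'s, which are fixed in advance, together with the numerals already substituted. Under the convention above neither side of the comparison contains any not-yet-assigned $u$. The later substitutions change only variables that do not occur in either tuple, so the elements of~$M$ they denote are unchanged, and inequality persists trivially.

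The main obstacle is the case of a strategy that plays some $a_{i,\ell}$ after a leaf $u_{h'}$, so that later substitutions could affect it. In that case we would first normalize $T'$: move all leaf edges and leaf nodes to the end of the order. This is legitimate because leaf plays carry the last $\exists z\,\forall u$ block, but we must check that Student's other terms do not use the~$u$'s. If they do, we replace those occurrences by the numerals chosen by the procedure before comparing. The outcome of the comparison then depends only on values already fixed at step~$h$, and the claim follows.
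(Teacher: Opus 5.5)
Your argument is complete only under the convention that every node in rows $0,\ldots,k$ of $T'$ precedes every leaf node $u_\ell$, and that convention is not available here. Item~5 and modification~b apply when the innermost block is existential, so that Teacher makes no move at the leaves. $\Psi(\overline r)$ ends with $\exists z\,\forall u$, so the $u_\ell$ are genuine Teacher moves. Student may use them in later terms $a_{i,\ell}$ and $Z_\ell$; in $M$ these become the numerals $\overline{j_{h'}}$.

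Your proposed normalization does not repair this. Moving the leaves to the end makes such terms mention variables not yet played, which violates condition~2 of the definition of a Herbrand game tree, and there is no evident way to remove the dependence while keeping the strategy winning. Replacing $u_{h'}$ by $\overline{j_{h'}}$ for $h'\ge h$ does not make the comparison depend only on data fixed at step~$h$, because those numerals are chosen later. In the interleaved case the $a$-coordinates of $(b_{0,\ell},a_{1,\ell},\ldots,a_{k,\ell},b_{k,\ell})$ really can change under later substitutions. So your sentence ``the outcome of the comparison then depends only on values already fixed at step~$h$'' is precisely the unproved step.

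The missing idea is to split on when branch $\ell$ was completed, i.e., whether $b_{k,\ell}$ precedes or follows $u_h$.
\begin{itemize}
\item If $b_{k,\ell}\prec u_h$: since $a_{1,\ell}\prec b_{1,\ell}\prec\cdots\prec a_{k,\ell}\prec b_{k,\ell}$, every variable in that tuple precedes $u_h$. Each such variable is either a $b$, fixed in advance, or some $u_{h'}$ with $h'<h$, already replaced by a numeral. Both tuples therefore denote fixed elements of $M$, and later substitutions cannot make them equal.
\item If $u_h\prec b_{k,\ell}$: use your first observation in sharper form. $Z_h$ is played before $u_h$, so $\tau^h_{2k+1,j_h}$ is a term built from numerals and from $b$-variables played before $u_h$, none of which is $b_{k,\ell}$. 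By the choice of the $b$'s as independent elements of $M$, $\tau^h_{2k+1,j_h}\neq b_{k,\ell}$. The tuples thus differ in the last coordinate, whatever happens to the $a$-coordinates.
\end{itemize}
This is the paper's argument, and it needs no normalization of $T'$.
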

\begin{proof}
The tuple $(\tau^{h}_{1,j_h},\ldots,\tau^{h}_{2k+1,j_h})$ can be expressed in
terms of~$Z_h$, and thus it is expressible by a
term that involves only variables played by Teacher before step~$h$ of the procedure. Therefore
it does not involve~$u_{h'}$ for $h' \ge h$, and so is not affected by the substitutions.
If $b_{k,\ell}$ was played before step~$h$, then also the
tuple $(b_{0,\ell},a_{1,\ell},b_{1,\ell}\dts a_{k,\ell},b_{k,\ell})$
is expressible by a term that involves only variables played before step~$h$,
so it also is not affected by the substitutions. In this case,
the two tuples cannot become equal. On the hand,
if $b_{k,\ell}$ was played after step~$h$, then the term~$\tau^{h}_{2k+1,j_h}$
cannot equal~$b_{k,\ell}$, by the choice
of $b_{i,j}$'s as independent members of~$M$. 
So, again in this case, the two tuples again remain distinct.
\end{proof}


With this claim, we can now show that the procedure always finds an $\ell_0$ satisfying~(*). Suppose this is not the case. Then we finish the construction of the model~$M$ by setting $P(b_{0,\ell},a_{1,\ell},b_{1,\ell}\dts a_{k,\ell},b_{k,\ell})$ false for every $\ell=1\dts N'$
and setting $P$ true otherwise. Then in~$M$,
\[
u_\ell=\overline j_\ell\wedge P(\Tbeta(f(\overline 1,\overline j_\ell),Z_\ell),\ldots,\Tbeta(f(\overline{2k{+}1},\overline j_\ell),Z_\ell))
\]
is true for all $\ell$, while the consequent of formula (\ref{e-imp2}) is false. Hence the disjunction of formulas (\ref{e-imp2}) is false. But this contradicts to the fact that $S'$ is a winning strategy for Student.

The rest of the proof is the same as the proof of Theorem~\ref{thm:arbitrary}. 
Let $\ell_0$ satisfy~(*). Let the interpretation of~$Z_{\ell_0}$ in~$M$ be~$C$, 
and let $c_1,c_2\dts c_N$ be the sequence of length~$N$ defined by~$C$ using the function $\beta(x,z)$.\footnote{Recall that by our definition, every element defines an arbitrarily long sequence.} Then, for every $j \le N$, 
\[
M\models(\Tbeta(f(\overline 1,\overline j),Z_\ell),\ldots,\Tbeta(f(\overline{2k{+}1},\overline j),Z_\ell))=
(c_{f(1,j)}\dts c_{f(2k+1,j)}),
\]
and there exists an $\ell_j \le N'$ such that 
\[
M\models(c_{f(1,j)}\dts c_{f(2k+1,j)})=(b_{0,\ell_j},a_{1,\ell_j},b_{1,\ell_j}\dts a_{k,\ell_j},b_{k,\ell_j}).
\]
This defines a homomorphism of $T$ into $T'$, which is in fact an embedding because the strategy $S$ used on $T$ is copycat.
\end{proof}


\subsection{Short formulas for large Herbrand trees II}\label{sec:SecondLBproof}
We now give a second construction of formulas that require uncomputably large
Herbrand games. Unlike the previous construction, this construction
adds only a single block of existential quantifiers at depth~$k+1$. (The
previous construction added two blocks, namely $\exists \forall$, but worked
relative to~$\ThTP$ instead of in pure logic.) This second
construction differs essentially from the previous one in that it does
not seem to encode any statements about the soundness or reflection principle
for Herbrand games. Instead, it uses a ``brute-force'' construction to ensure
that Student must make a complicated sequence of queries that follow
the $\prec$-ordering of~$T_r$.

First fix a value for~$k$.  Second,
fix a partial computable function $r \mapsto T_r$
that produces totally ordered, depth~$k$ trees~$T_r$, as described earlier.
Equivalently, the set of pairs $(r,T_r)$ is computably enumerable.
We will construct formulas where the Herbrand game trees
for the $r$-th formula~$\Psi_r$ must be at 
least as complex as~$T_r$ (when $T_r$ exists).
The formulas, which are defined later in equation~(\ref{eq:PsiR}), have the form 
\begin{equation}\label{eq:PhiN}
\Psi_r ~:=~
\forall y_0\, \exists x_1\, \forall y_1 \cdots \exists x_k\, \forall y_k\, \exists \vec z\,
    \Psi^*(\overline r, y_0, x_1, \ldots, y_{k-1}, x_k, y_k, \vec z).
\end{equation}
The dependence of~$\Psi_r$ on $k$ and 
on the choice of the partial computable function $r \mapsto T_r$ 
is suppressed in the notation.

The formulas~$\Psi_r$ will be logically valid, but are defined
with the aid of a first-order theory~$\ThTG_r$ that is based on~$\mathcal T$. 
$\ThTG_r$~has a unary predicate~$\TN$ that defines a ``sort'' of integers; namely,
$\TN(x)$~is intended to mean that $x$ is an integer.  The elements of integer sort will
satisfy the axioms of~$\mathcal T$.  In addition
$\ThTG_r$~has $k+3$ new non-logical symbols $E$, $\TTm$, and $G_\ell$ (for $0\le \ell\le k$)
with $k+2$ new (universal) axioms.

The intuition for the lower bound is roughly as follows. 
The theory~$\ThTG_r$ has a new binary relation symbol $E(x, y)$.
Assume that
for every~$x$ there is a~$y$ such that $E(x,y)$.\footnote{There are other constructions that 
avoid using~$E$, and work instead with a function version of $\TTm$.  However, we prefer the intuition behind
the edge relation symbol~$E$.}
Recall $N_r = N(w_r)$ is the
number of nodes in the tree~$T_r$. Teacher wishes to
guide Student to find values $i_0, i_1, i_2, \ldots, i_{N_r}$ so 
that $E(0,i_0)$ and $E(i_j, i_{j+1})$ for all~$j$.  The 
Student's goal is to find the value~$i_{N_r}$. Teacher is willing to provide
the value~$i_{j+1}$ if Student plays the value~$i_j$, but
{\em only if} Student has used the copycat strategy
and made her plays at the edges of~$T_r$
following the $\prec$-ordering of~$T_r$. If Student plays out-of-order or
does not use the copycat strategy, then Teacher provides no useful
information (e.g., just answers~0).  As long as the
Student plays the copycat strategy in the correct order, 
she wins if (1)~she plays for $N_r$ steps, eventually obtaining
the correct value for~$i_{N_r}$, or (2)~at some point Student
has played a value~$x$ but Teacher plays some value~$y$ so that
$E(x,y)$ fails. On the other hand, if Student fails to play the
copycat strategy in the correct order, then she will not reliably
win the game.

(As a warning to the reader, the term ``edge'' is used in two ways:
it can refer to either an edge in the Herbrand game tree, or
to edges in the directed graph defined by the non-logical symbols $E(x,y)$. 
These two notions
are completely different, but it should always be clear from the context
which meaning is intended.)

For fixed $r\ge 1$, $\ThTG_r$ has a unary predicate~$\TN$ picking out an
integer sort. It also has predicates $G_\ell$, $\TTm$, and~$E$.
We write $\TN(x_1,\ldots,x_j)$ as an abbreviation for
$\bigwedge_{i=1}^j \TN(x_i)$. For each (universal) axiom
$\forall \vec x \varphi(\vec x)$ of~$\mathcal T$ where $\varphi$ is
quantifier-free,
$\ThTG_r$~has the axiom
\[
\forall \vec x\, [ \TN(\vec x) \rightarrow \varphi(\vec x) ].
\]
$\ThTG_r$ also has the axiom $\TN(0)$, and
for every function symbol $g$ of~$\mathcal T$, $\ThTG_r$ has the
axiom $\forall \vec x( N(\vec x) \rightarrow N(g(\vec x))$. That is,
the number sort is closed under all function symbols.
The 
other new non-logical symbols
$G_\ell$~for $\ell=0,\ldots, k$, $\TTm$ and~$E$ are described next.
There are $k+2$ associated new axioms in~$\ThTG_r$.
\begin{itemize}
\item $E(x,y)$ is a binary relation. Informally, we think of
$E$~as the basis for a ``line-following''
algorithm where the goal is to follow edges in the directed graph
defined by the relation~$E$. In general, $E(x,y)$~does not imply $\TN(x)$ or $\TN(y)$.
\item $\TTm(t,b)$ is also a binary relation.  The intuition is
that $t$~is an integer time value, and $b$~is the $t$-th
answer provided by Teacher when Student
follows a copycat strategy on the game tree based
on~$T_r$. In the cases of interest, $\TN(t)$ will hold, but
not $\TN(b)$.
\item For each $\ell = 0, \ldots, k$, 
\[
G_\ell( y_0, x_1, y_1, \ldots, x_\ell, y_\ell)
\]
is a $(2 \ell {+} 1)$-ary relation.
The intuitive meaning of~$G_\ell$ is that 
if Student is playing the copycat
strategy on the optimal Herbrand game tree~$T_r$, then 
$y_0, x_1, \ldots, x_\ell, y_\ell$ are correct Teacher and Student
labels on nodes and edges along some partial branch in~$T_r$.
In the cases of interest, $\TN(x_j)$ and~$\TN(y_j)$ will not hold.
\end{itemize}

For each $\ell > 0$, $\ThTG_r$~has an axiom relating the $E$, $G_\ell$ and~$\TTm$
predicates, namely the universal closure of\,\footnote{This axiom is the only 
place where $\ThTG_r$ depends
on~$r$. It is needed here since the $\Tparent$ relation needs $w=w_r$ as a parameter.}
\begin{eqnarray}
\nonumber
\lefteqn{[\TN(\alpha) \land \TN(\beta) \land G_{\ell-1}(y_0,x_1,\ldots,x_{\ell-1},y_{\ell-1}) \land
    \TTm(\alpha,y_{\ell-1})} \\
& & \land W(\overline r, w) \land \Tparent(S(\beta),\alpha, w) \land \TTm(\beta,y) \land E(y,y')] 
    \label{eq:TmainGaxiom}  \\
\nonumber
& & \qquad\qquad \rightarrow 
      G_\ell(y_0,x_1,\ldots,x_{\ell-1},y_{\ell-1},y,y') \land \TTm(S(\beta), y'). 
\end{eqnarray}
obtained by universally quantifying $\alpha$, $\beta$, $w$, and
the $x_i$'s and $y_i$'s.
The third and fourth conjuncts of the hypothesis are intended to state that the
partial branch $y_0,x_1,\ldots,y_{\ell-1}$ was created
at the $\alpha$-th step of the Student-Teacher game. 
The next three conjuncts indicate that the partial branch formed at time~$\alpha$ 
is to be extended
at time~$S(\beta) = \beta{+}1$ and 
Teacher played~$y$ at the immediately previous time
step~$\beta$. The conclusion states that the game tree is correctly
extended provided Student plays $y$ in accordance with the
copycat strategy at time $\beta{+}1$ and Teacher follows the edge
relation~$E$ for his answer~$y'$ so that $E(y,y')$ holds.

Finally, for $\ell=0$, $\ThTG_r$~has the axiom
\begin{equation}\label{eq:BaseStepAx}
\forall y_0 \, [ E(0,y_0) \rightarrow G_0(y_0) \land \TTm(0,y_0) ] 
\end{equation}
that describes the first play by Teacher, at the root.

We now let the formula~$\Psi_r$ be:
\begin{align}
\nonumber
\forall y_0\, \exists x_1 &\, \forall y_1 \cdots 
    \exists x_k\, \forall y_k\, \exists w\, \exists \vec z\,
   [ \, \\
\label{eq:PsiR}
   & (\TN(w) \land W(\overline r, w) 
     \land G_k(y_0,x_1,y_1,\ldots,x_k,y_k) 
     \land \TTm(N(w),y_k)) \\
\nonumber
& \lor ~  \lnot E(0,y_0) ~ \lor ~ \bigvee \nolimits_{i=1}^k \lnot E(x_i,y_i)  \\
\nonumber
& \lor ~ \bigvee\nolimits_{\sigma \in Ax(\ThTG_r)} \lnot \sigma(\vec z) \,].
\end{align}

We let $\Psi^*_r$ denote the quantifier-free subformula of~$\Psi_r$
in square brackets. The first disjunct of~$\Psi^*$ expresses the
condition that $w = w_r$ and that the Student-Teacher game has successfully
reached time step $N_r =N(w_r)$ with $G_k(\cdots)$ and $\TTm(N_r,y_k)$ holding.
The remaining disjuncts of~$\Psi^*_r$ express the condition that either
Teacher has failed to follow an $E$-edge in one of his responses
to Student
or that an axiom of~$\ThTG_r$ is false.
The disjunction over formulas $\sigma$ in $Ax(\ThTG_r)$ means 
$\sigma$ ranges over the quantifier-free formulas whose
universal closures form the axioms of~$\ThTG_r$. This includes
the equality axioms.

Our main theorem about $\Psi_r$ is that it requires large Herbrand
game trees, with the largeness controlled by the growth rate of the
(fast growing) partial computable function~$r \mapsto T_r$.

\begin{theorem} \label{thm:mainPsiN}
Fix $k$ as above. Let $r>0$ such that $T_r$ exists.
Then,
\begin{itemize}
\item[(a)] $\Psi_r$ is logically valid. Furthermore it has a 
Herbrand game tree~$T$ with a winning strategy for the
Student such that $T$ is the same as~$T_r$ plus additional edges at
depth~$k+1$ (for witnessing the innermost, existential quantifiers of~$\Psi$).
Student's strategy is copycat except on the bottommost edges of~$T$.
\item[(b)] If $T$ is a Herbrand game tree for which Student 
has a winning strategy in the Student-Teacher game for~$\Psi_r$,
then $T_r$ is embeddable in~$T$.
\end{itemize}
\end{theorem}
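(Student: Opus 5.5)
For part~(a), I will let Student play the copycat strategy on~$T_r$ in its $\prec$-order. Then, at the end, on every bottom edge she plays the same witnesses: $w:=\overline{w_r}$ and, for $\vec z$, tuples of terms instantiating each axiom $\sigma\in Ax(\ThTG_r)$. The disjunction over branches of~$\Psi^*_r$ then has the form $\bigvee_b D_b \lor \lnot\bigwedge_{\sigma,\vec t}\sigma(\vec t)$. It suffices to show, in pure logic, that the conjunction of finitely many axiom instances $\bigwedge\sigma(\vec t)$ implies $\bigvee_b D_b$. Here $D_b$ is the first disjunct on branch~$b$ together with the $\lnot E$ disjuncts.

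I will argue by induction along the play order $u_0\prec u_1\prec\dots\prec u_{N_r-1}$. Let $z_j$ denote Teacher's variable at~$u_j$. Assume all the relevant $E$-atoms hold: $E(0,z_0)$, and $E(z_{j-1},z_j)$, since copycat means the edge into $u_j$ carries $z_{j-1}$. Under this assumption I derive $\TTm(\overline j,z_j)$ and $G_\ell$ of the partial branch ending at~$u_j$, where $\ell$ is the depth of~$u_j$. The base case is the instance of axiom~(\ref{eq:BaseStepAx}) at $y_0:=z_0$. For the step, I take the instance of~(\ref{eq:TmainGaxiom}) with $\alpha:=$ the index of the parent of~$u_j$, $\beta:=\overline{j-1}$, $w:=\overline{w_r}$, $y:=z_{j-1}$, $y':=z_j$. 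Its side hypotheses $\TN(\cdot)$, $W(\overline r,\overline{w_r})$ and $\Tparent(S(\overline{j-1}),\alpha,\overline{w_r})$ are closed quantifier-free facts. They follow from finitely many further instances of the $\mathcal T$-axioms relativized to~$\TN$, which Student also supplies inside~$\vec z$; this uses properties (1)--(4$'$) and Claim~\ref{clm:qfMethod}. The indices must be off by one so that $N(w_r)$ is the final time; I will fix the indexing so that the last node gives $\TTm(N(\overline{w_r}),\cdot)$, adjusting the base/Time convention if necessary. At a leaf whose time is the last one, the first disjunct holds on that branch. If some $E$-atom fails, the corresponding $\lnot E$ disjunct holds on a branch through that node. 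Hence the disjunction is a tautology.

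For part~(b), I follow the model-theoretic strategy of Theorem~\ref{thm:firstLBproof}(b). Given a winning Student strategy on~$T$, I build a countermodel by an adaptive Teacher, and I would try this first. The model's $\TN$-part is the standard $\N$ with $\mathcal T$ interpreted truly. $E$ is interpreted as the graph of a path: Teacher's elements are fresh non-number elements $e_0,e_1,\dots$. I set $E(0,e_0)$, and $E(x,y)$ holds for a Student term value $x$ and Teacher reply $y$ exactly when $x$ evaluates to the current end $e_{j-1}$ of the path and the play is the copycat move prescribed by~$T_r$'s order; then the reply is the new $e_j$. In every other case Teacher answers a fresh element $y$ and $E(x,y)$ is also set true, so no $\lnot E$ disjunct ever becomes true. $\TTm$ and $G_\ell$ are interpreted minimally: they are the least relations closed under the axioms (\ref{eq:BaseStepAx}) and~(\ref{eq:TmainGaxiom}). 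This makes every axiom true, so the last disjunct fails for every choice of~$\vec z$.

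Thus the first disjunct must hold on some branch. By minimality, $\TTm(N_r,y_k)$ together with $G_k$ forces, by downward induction, the existence of a full copycat play of the $\prec$-order of~$T_r$ inside~$T$. This yields a map from $T_r$ into~$T$ preserving parent, order and labels. It is injective by the lemma in the proof of Theorem~\ref{thm:arbitrary} that copycat trees do not embed homomorphically into smaller trees.

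The main obstacle is making the adaptive model well defined. Student's terms are evaluated in a model that Teacher builds online, and Teacher must decide whether a Student value equals the current path end~$e_{j-1}$. I will handle this as in the Claim in the proof of Theorem~\ref{thm:firstLBproof}. Teacher's elements are chosen free, so that no term over earlier elements equals a later one, and all equalities are decided at play time and never change later. I also need to check that the least fixed point of $\TTm,G_\ell$ is genuinely forced to follow the tree structure encoded by $\Tparent$ and~$W$.
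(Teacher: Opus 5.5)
Your part (a) follows the paper's argument. Both proceed by induction along $\prec$ using instances of (\ref{eq:BaseStepAx}) and (\ref{eq:TmainGaxiom}); the paper packages this through the formulas $F_n$ and $S_n$. Two small points. You need several distinct bottom edges to supply all the axiom instances. And the off-by-one you flagged is a real inconsistency in the paper's indexing: the last node $u_{N_r-1}$ gets time $N_r-1$, not $N(w_r)=N_r$.

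The genuine gap is in (b), at the step ``by minimality, $\TTm(N_r,y_k)$ together with $G_k$ forces, by downward induction, a full copycat play of $T_r$ inside $T$.'' Minimality gives you only two things. The prefixes of the winning branch lie in the $G_\ell$'s, and some chain of Teacher replies carries the times $0,1,2,\dots$. It says nothing about \emph{where} in $T$ those replies were made. Axiom (\ref{eq:TmainGaxiom}) produces $\TTm(S(\beta),y')$ from $E(y,y')$ and an arbitrary $G_{\ell-1}$-tuple whose last entry has the parent's time. It never requires $y'$ to sit below a node carrying that tuple. In your model $E$ holds for every query/reply pair, so the clock advances wherever Student plays the current element.

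Here is a concrete counterexample. Take $k=2$ and let $T_r$ consist of $u_0$, its child $u_1$, and two children $u_2\prec u_3$ of $u_1$; index so the final time is $3$. Student plays as follows:
\begin{itemize}
\item she plays $z_0$ below the root and gets $z_1$;
\item she plays $z_1$ again below the root, getting $z_2$ at depth~$1$;
\item she plays $z_2$ below the $z_1$-node, getting $z_3$;
\item she then completes the tree and supplies axiom instances on bottom edges.
\end{itemize}
Use the instance of (\ref{eq:BaseStepAx}) at $z_0$ and the instances of (\ref{eq:TmainGaxiom}) with $(\alpha,\beta)=(0,0),(1,1),(1,2)$, the last two both using $G_1(z_0,z_0,z_1)$. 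Assume these instances and $E(0,z_0),E(z_0,z_1),E(z_1,z_2),E(z_2,z_3)$ hold; all of these occur negated in the Herbrand disjunction. They yield $G_2(z_0,z_0,z_1,z_2,z_3)\land\TTm(\overline 3,z_3)$ on the actual branch through the $z_1$-node. So this is a winning strategy. Yet no depth-$1$ node of $T$ has two children, so $T_r$ does not embed into $T$.

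Hence no choice of countermodel can rescue your downward induction: (b) is not provable for $\Psi_r$ as defined. The paper's own argument breaks at the same place. Its Teacher answers $c_2$ to $c_1$ played below the root, because $G_0(c_0)$ holds, and its map then sends $u_2$ to a depth-$1$ node. What the construction, and your model, do establish is that Student needs about $N_r$ sequential queries, so $T$ is large. Forcing the shape and order of $T_r$ would require changing the axioms or the formula.
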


\begin{proof}
We prove part~(a) first. The general idea of showing $\Psi_r$ is
valid is to follow the play of a
Student-Teacher game on the tree~$T_r$ and produce
a sequent calculus proof of $\Psi_r$ with a midsequent. However, for convenience,
we will allow a certain amount of cuts (on low complexity
formulas) and will allow some existential quantifiers in what might
otherwise be the midsequent.  

We assign distinct free variables to each edge and each node
in~$T_r$. Variables on edges are $x$-variables, and those
on nodes are $y$-variables, denoted non-uniquely $x^n_i$ and~$y^n_i$ as described next.
For $n\in \mathbb N$, let $v_n$ be the 
$n$-th node in the $\prec$-ordering of~$T_r$.
We let $x^n_i$ and $y^n_i$ denote the variables on the edges and nodes
on the path from the root of~$T_r$ to~$v_n$; namely, these
are the variables $y^n_0, x^n_1, y^n_1,\ldots, x^n_d, y^n_d$
where $d$ is the depth of the node~$v_n$.
Note that if $n'$ and~$n$ are on the same path in~$T_r$,
then the variables $x^n_i,y^n_i$ are the same as the variables $x^{n'}_i,y^{n'}_i$
for $i$ less than or equal to the depths of $n, n'$.

For $n\le n_r$, define $F_n$ to be the formula
\[
\lnot E(0,y^n_0) ~\lor~ \bigvee_{0<i\le d_n}  \lnot E(x^n_i, y^n_i ) 
 ~\lor~ \exists \vec z \bigvee_{\hbox{\scriptsize $\sigma \in Ax(\ThTG_r$})} \lnot \sigma(\vec z)
\]
where $d_n$ is the depth of $v_n$ in~$T_r$. The intent
is that $F_n$ expresses that, in handling the $n$-th node, either the
Teacher failed to follow an $E$-edge and produce a valid $y$-variable value 
or some $\ThTG_r$-axiom is false.  

Define the formula~$S_n$ to be
\begin{equation}\label{eq:Sn}
G_{d_n}(y^n_0, x^n_1, y^n_1,\ldots, x^n_{d_n}, y^n_{d_n}) \land \TTm(\overline n, y^n_{d_n}).
\end{equation}
The intent is that $S_n$ expresses that the Student-Teacher game has correctly
reached the node~$v_n$ and produced a value~$y^n_{d_n}$ such that there is a directed
path of $E$-edges from 0 to~$y^n_{d_n}$ of length~$n$.
(``$F$'' and~``$S$'' stand for ``fail'' and ``succeed''.)

\begin{claim}\label{claim:TprovEasy}
Let $v_{n'}$ be the parent node of~$v_n$.
The following are logically valid:
\begin{itemize}
\item[\rm (i)] The sequent $F_{n'} \Rightarrow F_n$, and
\item[\rm (ii)] The sequent $W(\overline r, w), S_{n'}, S_{n-1} \Rightarrow S_n, F_n$.
\end{itemize}
\end{claim}
The proof of~(i) is immediate from the definitions. This is because 
$d_{n'} = d_n{-}1$ and $x^{n'}_i, y^{n'}_i$ and $x^n_i, y^n_i$ are the
same variables, for $i < d_n$.
The proof of~(ii)
is straightforward, using
the axiom~(\ref{eq:TmainGaxiom}) of~$\ThTG_r$, the fact that $\ThTG_r$~proves
$S(\overline{n{-}1}) = \overline n$ and $W(\overline r, w) \rightarrow \Tparent(\overline n, \overline{n'}, w)$, 
and the fact that $F_n$ has both $\lnot E(x^n_d, y^n_d)$ and
the negation
of the axioms of~$\ThTG_r$ as disjuncts.

\begin{claim}\label{claim:TprovInd}
For $n \le N(w_r)$, the following sequent is logically valid:
\[
  W(\overline r, w) \fCenter F_0 , F_1 , F_2 , \ldots , F_n , S_n.
\]
\end{claim}
The proof of the claim is by strong induction on~$n$. 
In the base case, $n=0$, the
formula $F_0 \lor S_0$ is an immediate consequence of the 
axiom~(\ref{eq:BaseStepAx}) of~$\ThTG_r$.
The induction step follows readily from the 
induction hypothesis for the parent~$n'$ of~$n$ 
and the induction hypothesis for $n{-}1$, using 
Claim~\ref{claim:TprovEasy}(ii).

\begin{claim}\label{claim:IprovIndbis}
Suppose that the $n$-th node~$v_n$ of~$T_r$ is a leaf node.
Then the following is logically valid:
\[
  W(\overline r, w) \rightarrow S_n ~\lor~ \bigvee\{ F_{n'} : \hbox{$n'\le n$ and $v_{n'}$ is a leaf of~$T_r$} \} .
\]
\end{claim}
This claim follows readily from Claim~\ref{claim:TprovInd}
by using cuts with the sequents of Claim~\ref{claim:TprovEasy}(i) to remove 
the disjuncts~$F_{n'}$ associated with
internal, non-leaf, nodes~$v_{n'}$ of~$T_N$.

Recall the definition of~$S_n$ in~(\ref{eq:Sn}).
For $v_n$ a leaf of~$T_r$, 
let $S^*_n$ be the formula 
\[
G(y^n_0, x^n_1, \ldots, x^n_k, y^n_k) \land \TN(w) \land W(\overline r, w) \land \TTm(N(w),y^n_k) .
\]
When $n=N(w_r)$, we have 
$\ThTG \vdash W(\overline r, \overline w_r) \land \overline n=N(\overline w_r)$.
Thus $\ThTG_r$~proves $S_n \rightarrow (\exists w)S^*_n$ for $n=N(w_r)$.
Of course, $\ThTG_r$~proves
$F_{n'} \rightarrow F_{n'} \lor (\exists w) S^*_{n'}$ for all leaf nodes~$v_{n'}$.
Also trivially, $\ThTG_r$ proves 
$(\exists w)S^*_n \rightarrow F_n \lor (\exists w)S^*_n$
for $n=N(w_r)$.
Thus, it follows from Claim~\ref{claim:IprovIndbis} that $\ThTG_r$ proves
\begin{equation}\label{eq:EwSnFn}
\bigvee \{ F_{n'} \lor (\exists w) S^*_{n'} : \hbox{$v_{n'}$ is a leaf node of $T_r$}\}
\end{equation}
Recall that $\Psi^*_r$ is defined as the formula in square brackets in~(\ref{eq:PsiR}).
To understand Student's winning strategy, 
we need to consider the formulas $\Psi^*_r(y^n_0,x^n_1,\ldots, x^n_k, y^n_k, w, \vec z)$
for $v_n$ a leaf node. 
From the fact that the formula~(\ref{eq:EwSnFn}) is $\ThTG_r$-provable,
and using prenex operations
(namely pulling out existential quantifiers $\exists w$ and~$\exists \vec z$),
$\ThTG_r$~proves the sequent
\begin{equation}\label{eq:AlmostDone}
\Rightarrow \{  
   \exists w \, \exists \vec z \, \Psi^*_n (y^n_0,x^n_1,\ldots, x^n_k, y^n_k, w, \vec z )
  : \hbox{$n$ is a leaf node of $T_N$} 
\}
\end{equation}

We use this similarly to the way the proof of Theorem~\ref{thm:midsequent} 
in appendix~\ref{sec:midsequent}
handles the midsequent. First, the $y^n_k$'s are
universally quantified for all~$n$, then the $x^n_k$'s are universally quantified
for all~$n$,
then the $y^n_{k-1}$'s, $x^n_{k-1}$'s,
etc., until the $y^n_0$ is universally quantified. After each round
of existential quantifier introduction, like formulas are merged into a single formula
with contraction inferences. By construction, this results in a sequent
calculus proof of~$\Psi_r$ as desired, thus showing $\Psi_r$ is logically valid. 

It is clear from the above construction in that the tree~$T_r$
can be used as the Herbrand game tree for~$\Psi_r$. 
That completes the proof of part~(a) of Theorem~\ref{thm:mainPsiN}.

It remains to prove part~(b).  
We need to prove that any Herbrand game tree~$T^*$ for~$\Psi_r$ must
have $T_r$ embedded in it.  

Suppose the Student-Teacher game has a winning strategy 
for Student when played on such a tree~$T^*$. Note that this
allows Student to use any strategy, not necessarily the
copycat strategy.  This strategy must work in all circumstances,
i.e.\ in any model~$M$ of~$\ThTG_r$ and 
against any Teacher strategy. Teacher now plays
members of~$M$, not free variables.
To formulate a suitable Teacher strategy,
we define a nonstandard model~$M$ of~$\ThTG_r$ 
over which the Herbrand game will be played.
The theory~$\ThTG_r$ captures enough 
of the first-order theory of~$\mathbb N$ so
that the model~$M$ has a copy of the standard integers
that satisfy~$\TN$. We add to the language
of~$\ThTG_r$ constant symbols $c_n$ for $n\le N(T_r)$. The $c_i$'s
will be non-integer sort in~$M$; i.e., $\TN(c_i)$ does not hold.
Therefore, by compactness, we can choose~$M$ so that no
non-trivial identity $t(\vec c) = t'(\vec c)$ holds in~$M$,
except of course the trivial identities $c_i = c_i$.
In particular,
no $c_i$ is equal to
any term $t(c_0,\ldots, c_{i-1})$.  This renders it impossible
for Student to give $c_i$ as an answer
if only $c_0,\ldots,c_{i-1}$ are known. It also makes it impossible
for Student to play the same non-integer member of~$M$ twice at the same node at the game
tree, since Student is not allowed play the same term twice at a given node.

We also stipulate that 
$E(0,c_0)$ and, for all $i< N_r = N(w_r)$, $E(c_{i-1}, c_i)$ hold in~$M$. 
For all other members~$a$ of the universe of~$M$, we stipulate
that $E(a,0)$. No other values of $E(\cdot,\cdot)$ hold in~$M$.

Finally, the values of $G_i(y_0,x_1,\ldots, x_d,y_d)$
and $\TTm(\alpha,y)$ need to be set.  For this, we
run a copycat strategy of Student in~$M$ using the game tree~$T_r$,
i.e., Student plays (copycat) at the nodes of~$T_r$ in $\prec$-order.
Teacher first plays~$c_0$, and thereafter
responds to Student's plays with the
values $c_1,c_2, \ldots, c_{N_r}$. 
The interpretation in~$\mathcal M$ of the $\TTm$ predicate is that
$\TTm(i,a)$ is true precisely if $0 \le i \le N_r$ and $a = c_i$. For
all other pairs~$i,a$, $\TTm(i,a)$~is false.  The interpretation
of~$G_j$ in~$M$ is set by stipulating, that
if this play of the Student-Teacher game on~$T_r$ 
has Student play $c_{i-1}$ on a edge at depth~$j$ and 
thus Teacher 
answers~$c_i$ on the depth~$j$ child vertex, then $G_i(b_0,a_1,\ldots,a_j,b_j)$
holds, where $b_0, a_1,\ldots,a_j,b_j$ are the values
played by Teacher and Student on the path in~$T_r$ to where Student and
Teacher just played. (Thus, $b_0 =c_0$ and $a_k = c_{i-1}$ and $b_k = c_k$.)
All other values of~$G_j$ are set false in~$M$.

To describe the embedding of~$T_r$ into~$T^*$,
we run the Student-Teacher game using Student's winning
strategy on~$T^*$. This time, we run the Student-Teacher game with
Student allowed to ask arbitrary terms in the language of~$\ThTP$
that make use previous values supplied by Teacher as extra parameters.
We constrain Teacher so that Teacher, upon a value~$a$ played by Student,
will answer either with~$0$ or with the unique $c_i$ such that
$E(a, c_i)$ holds, but with the stipulation that Teacher answers~0 unless
this will allow Student to win without finding~$c_{N_r}$. Specifically,
if Student plays~$c_{i-1}$ at an edge of depth~$j$
and if $G_{j-1}(b_0, a_1, b_1,\ldots, a_{j-1}, b_{j-1})$ holds
for the path~$T^*$ leading to the parent node of the edge that holds
Student's query, then Teacher
replies with~$c_i$.  In all other cases, Teacher responds with~$0$.

The embedding of~$T_r$ into $T^*$ is defined by
letting the $i$-th node~$u_i$ of~$T_r$
be mapped to the node in~$T^*$ where the Herbrand game 
using Student's winning strategy in~$T^*$ causes
$c_i$~to be Teacher's answer.  We claim that there must be
a unique such node in~$T^*$. If there was no such node, Student would be unable
to win the Herbrand game. This is because Teacher 
can always give answers that obey the relation~$E(\cdot, \cdot)$
as needed, so that that only way Student can win is to
successively learn $c_1,c_2, \ldots$
to eventually learn~$c_{N_r}$, and because the axioms of~$\ThTP$
hold in~$\TN^{M}$.  The fact the node must
be unique can be proved by induction on~$i$ and the fact that Student
cannot play the same value from~$M$ twice at the same node
in the game tree.

This completes the proof of part~(b) of Theorem~\ref{thm:mainPsiN}.
\end{proof}


\section{Open problems}

It would be nice to find some concrete examples of theorems whose proofs need complex trees. Finding such theorems is, certainly, not an easy task for several reasons. 
First, we do not have even good examples of theorems for which we can prove that they need long proofs; all we have are some contrived examples, although many theorems are known that have long proofs. Therefore it would be interesting to find a concrete theorem whose only known proof has a complex tree without proving that it does not have a simple one. 
The results of \cite{Jezil:Factorization,JezilTsintsilidas:StudentTeacher} 
mentioned in the introduction is the only example that we know in which 
the structure of the Student-Teacher game plays a role.
Second, here we have only analyzed proofs based on the Herbrand theorem, while proofs in mathematics are more like proofs in the natural deduction systems, which is close to the sequent system calculus and Hilbert style calculi.
Third, in most mathematical theorems, the number of quantifier alternations is only two or three. If the quantifier prefix is $\forall\exists\forall$, then the depth of the trees is one and such trees are determined by a single numeric parameter. Depth-2 trees have more structure and if we also consider the total order, we get even more.

Here is an example that shows how order may be important. 
\begin{proposition}[Proposition 4.3 in \cite{Pudlak:ConsistencyGames}]
There exists a provable  $\exists\forall\exists$ sentence which does not have a Herbrand proof of the form
\[
\varphi(t_1,z_1,s_1(z_1))\ \vee\ \varphi(t_2(z_1),z_2,s_2(z_1,z_2))\ \vee\dots
\vee\ \varphi(t_\ell(z_1\dts z_{\ell-1}),z_\ell,s_\ell(z_1\dts z_{\ell-1},z_\ell)).
\]
In this formula $z_1\dts z_\ell$ are distinct variables and terms may contain only the displayed variables.
\end{proposition}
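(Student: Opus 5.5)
The plan is to reduce the proposition to a statement about the order of play, and then to prove that statement with the countermodel technique of part~(b) of Theorems~\ref{thm:arbitrary} and~\ref{thm:firstLBproof}. The sentence will only be constructed to a first approximation below; its exact form is the part I have not worked out.

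\textbf{Reading the forbidden shape as a game.} For an $\exists x\,\forall y\,\exists z$ sentence the game tree has depth one. Its root is unlabeled, it has edges $t_i$ leading to nodes $z_i$, and each node has further bottom edges $s$. The displayed shape is one particular total order. In it, the bottom edge $s_i$ of branch~$i$ is played immediately after $z_i$, before the next edge $t_{i+1}$. So $s_i$ may use only $z_1\dts z_i$, and $t_1$ must be a closed term.

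This order is exactly the one excluded by convention~b after Theorem~\ref{thm:Herbrand}. Theorem~\ref{thm:Herbrand} with that convention guarantees that a Herbrand proof always exists when all bottom plays are postponed to the end. The proposition says this postponement cannot be avoided. We must therefore find a valid sentence where some bottom witness $s_i$ must use a variable $z_j$ with $j>i$.

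Theorem~\ref{thm:arbitrary} cannot supply the sentence directly. Its extra block $\exists\vec z$ only reflects the structure of the depth-one tree. As the proof of part~(a) there shows, Student wins by letting the last branch carry the whole vector. I checked this on the tree with two leaves, where the sequential shape already wins, so a different construction is needed.

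\textbf{Construction I would try.} I would encode a forced chain with a binary relation~$E$ and a constant~$c$, in the spirit of the formula $\Psi_r$ of Section~\ref{sec:SecondLBproof}. The intended Herbrand proof has two instances:
\begin{itemize}
\item $x_1=c$, with Teacher answering $z_1$;
\item $x_2=z_1$, with Teacher answering $z_2$;
\item the bottom witnesses $s_1=z_2$ and $s_2=z_1$ (or similar), all played at the end.
\end{itemize}
The matrix $\varphi(x,y,z)$ should combine three kinds of disjunct:
\begin{itemize}
\item an ``escape'' disjunct $\lnot E(x,y)$, which lets Teacher be forced to follow $E$;
\item a disjunct saying the chain starts at~$c$;
\item a disjunct linking the first instance to the \emph{next} $E$-step, such as $E(y,z)$ conjoined with a condition that fails unless $z$ is the $E$-successor of~$y$.
\end{itemize}
The aim is a matrix for which $\varphi(c,z_1,z_2)\lor\varphi(z_1,z_2,s)$ is a tautology. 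It must fail to be one when the instance with $x=c$ receives a bottom witness built only from $z_1$, even if arbitrarily many further instances with fresh Teacher variables are added.

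\textbf{Lower bound.} Suppose we have a sequential Herbrand proof. I would build a model~$M$ exactly as in the proof of Theorem~\ref{thm:mainPsiN}(b):
\begin{itemize}
\item Teacher answers with fresh, independent elements $c_1,c_2,\dots$, chosen by compactness so that no $c_i$ equals a term in the earlier ones;
\item $E$ holds only along the chain Teacher is forced to follow, and every other element is sent to a junk value.
\end{itemize}
In the sequential order, the first branch with a closed $t$-term has its bottom witness fixed before any later Teacher value is known. By independence, that witness cannot denote the later chain element. This leaves an assignment to the remaining atoms, obtained as in Lemma~\ref{lem:l-propositional}, that falsifies every instance. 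A duplicate branch with $x=c$ does not help, since it produces a fresh, unrelated Teacher value.

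\textbf{Main obstacle.} The main obstacle is the exact choice of~$\varphi$. It must be valid, yet it must not be satisfiable by the trivial witness $s=y$, which the sequential order does allow. My first few candidates failed in one of two ways: either the matrix was not valid, or $s_i=z_i$ already closed the disjunction.

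I would first try a matrix that makes the link disjunct true only when $z$ is an $E$-successor of~$y$ distinct from~$y$. That would force $s_1$ to be a value Teacher plays later. I would then verify validity by exhibiting the two-instance tautology above.
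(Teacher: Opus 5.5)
\textbf{Gap: the sentence is never exhibited.} The proposition is a pure existence claim, and your proposal does not produce the witness. The ``Construction I would try'' paragraph only lists desiderata for $\varphi$. The ``Lower bound'' paragraph argues about a matrix that has not been fixed. So, as it stands, nothing is proved. (The paper gives no proof of its own here. It cites Pudl\'ak, whose argument is non-constructive and uses G\"odel's incompleteness theorem, so an explicit sentence would be a genuinely different, constructive route.) Your reduction is correct: you need a valid sentence in which an instance with a closed first term requires a bottom witness that is a later Teacher variable. You are also right that the formulas of Theorem~\ref{thm:arbitrary} do not achieve this.

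\textbf{The obstacle you report is not real.} The bare escape-plus-link matrix already works. It needs no chain-start disjunct, no ``distinct from $y$'' clause (which would require equality), and no compactness model. Take $\exists x\,\forall y\,\exists z\,(E(x,y)\to E(y,z))$ over a language with a constant $c$.

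\emph{Validity.} Your intended play $x_1=c$, $x_2=z_1$, $s_1=z_2$ gives $(E(c,z_1)\to E(z_1,z_2))\lor(E(z_1,z_2)\to E(z_2,s_2))$. This is a tautology, so the sentence is valid.

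\emph{No disjunction of the forbidden form.} Any such disjunction is $\bigvee_i (E(t_i,z_i)\to E(z_i,s_i))$. By Lemma~\ref{lem:l-propositional} with $J$ a singleton, it is a tautology only if some atom $E(t_j,z_j)$ is syntactically identical to some $E(z_i,s_i)$. That would require two things:
\begin{itemize}
\item $t_j\equiv z_i$, hence $i<j$, since $t_j$ mentions only $z_1,\ldots,z_{j-1}$;
\item $s_i\equiv z_j$, hence $j\le i$, since $s_i$ mentions only $z_1,\ldots,z_i$.
\end{itemize}
These cannot both hold. In particular, the trivial witness $s_i=z_i$ yields $E(z_i,z_i)$, which can never match an escape atom $E(t_j,z_j)$, because $t_j$ cannot contain $z_j$.

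Fixing this concrete sentence and carrying out these two short verifications is exactly what your proposal is missing.
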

In terms of Herbrand-game trees, the proposition says that there a sentence in the prenex form with the prefix $\exists\forall\exists$ that does not have a Student-Teacher game proof where the tree has non-splitting branches and the players play successively complete branches, i.e., they always finish a branch before going to the next. One can easily generalize it and construct a formula that {cannot} be proved by any depth-2 tree and players playing so that they always finish a depth-1 subtree before they proceed to the next. In other words, the sentence cannot be proved using the natural order on depth-2 trees. Interestingly, the proof in~\cite{Pudlak:ConsistencyGames} is non-constructive and uses G\"odel's Incompleteness Theorem, but an explicit construction of such a sentence is known. (This result is not covered by the theorems in this paper.)

One of the remarks above suggests as an interesting research direction to study the complexity of proofs in the sequent calculus---not only the lengths of proofs, but \emph{the complexity of the structures of the proofs.} In fact, our results can be interpreted as a study of tree-like cut-free proofs. The midsequent theorem is a way to transform a cut-free proof into a kind of normal form where the first part is in propositional calculus and the second one only uses quantifier rules. Here we have shown the bottom part can be arbitrarily complex.


\appendix 

\section{Proof of KPT Game Tree properties}\label{sec:midsequent}

This appendix proves Theorem~\ref{thm:Herbrand}. We present a
purely proof-theoretic proof by giving
a constructive method to convert a proof in the form given 
by Gentzen's midsequent theorem to a Student-Teacher game
with a winning strategy for Student.
A model-theoretic
proof of the correctness of the Student-Teacher game 
is sketched in Buss-Ko{\l}odziejczyk-Thapen~\cite{BKT:fragments};
it is also possible to
prove the existence of Herbrand trees from the
construction in~\cite{BKT:fragments} as well.
Another proof of the correctness of the Student-Teacher game
is given by Li-Oliveira~\cite{LiOliveira:Unprovability} who use a reduction
from the version of the Herbrand theorem proved by Buss~\cite{Buss:herbrandtheorem}
(with a correction due to McKinley~\cite{McKinley:herbrandcorrected}).  This latter
version of the Herbrand theorem is essentially the same as the original construction
by Herbrand~\cite{Herbrand:logicalwritings}.

The midsequent theorem applies to the Gentzen sequent
calculus~LK. We work with a variant of~LK
where sequents have the form $\Gamma \Rightarrow \Delta$
where $\Gamma$ and~$\Delta$ are ordered multisets of
formulas. The inferences of LK that are relevant for us
are: (1)~the quantifier inferences
\begin{equation}\label{eq:LKquantinfs}
\hbox{\Axiom$\fCenter \Gamma \cup \{A(t)\}$
\LeftLabel{$\exists$:right}
\UnaryInf$\fCenter \Gamma \cup \{\exists z \, A(z)\}$
\DisplayProof}
\qquad \qquad
\hbox{\Axiom$\fCenter \Gamma \cup \{A(b)\}$
\LeftLabel{$\forall$:right}
\UnaryInf$\fCenter \Gamma \cup \{\forall z \, A(z)\}$
\DisplayProof}
\end{equation}
where $b$~is a
free variable (called the ``eigenvariable'') that cannot appear
in the lower sequent of the $\forall$:right inference,
$z$~is a bound variable, and $t$~is a term; 
and (2)~the contraction inference
\begin{equation}\label{eq:contraction}
\hbox{
\Axiom$\fCenter \Gamma \cup \{ A, A \}$
\UnaryInf$\fCenter \Gamma \cup \{ A \}$
\DisplayProof
}
\end{equation}
where in the upper sequent, the $A$'s are any two instances of the same formula, and
one of them is removed to form the lower sequent 
(recall that $\Gamma$ is a multiset).
In the lower part,~$P$, of the proof below the midsequent (see
Theorem~\ref{thm:midsequent}) all sequents have no formulas on
the left, and all inferences 
are of the types (\ref{eq:LKquantinfs}) and~(\ref{eq:contraction}).

The explicitly indicated formulas in the upper sequents
(that is, $A(t)$, $A(b)$, and~$A$) are called the \emph{auxiliary formulas}
of the inference; the explicitly indicated formulas
in the lower sequents ($\exists z A(z)$, $\forall z A(z)$,
and~$A$) are called \emph{principal formulas} of the inference.

We can trace the occurrences of formulas in~$P$ from the midsequent 
to the endsequent
using the \emph{logical flow graph} of~\cite{Buss:kprove}, simplified
for our setting since $P$ 
contains only unary inferences. The 
logical flow graph is a directed graph. Its nodes are the
occurrences of formulas in~$P$.  For each inference, there
is a directed edge from each auxiliary formula to the principal formula
in the lower sequent.  
In addition, for each formula~$B$ in~$\Gamma$ and all appropriate~$i$,
there is a directed edge from the $i$-th occurrence of~$B$ in the~$\Gamma$
in the upper sequent to the $i$-th occurrence of~$B$ in the~$\Gamma$ in
the lower sequent.\footnote{We defined cedents to be ordered multisets precisely 
to allow defining the logical flow graph in this way.}  If $B$ and~$C$ 
are occurrences of formulas in~$P$, then $B$~is an \emph{ancestor}
of~$C$ if there is a path from $B$ to~$C$ of length $\ge 0$.
If so, we also say $C$ is a \emph{descendant} of~$B$.
Furthermore, if $B$ and~$C$ are occurrences of the same formula and $B$~is an ancestor of~$C$,
then we call $B$ a \emph{direct ancestor} of~$C$.

Let $\Phi$ be a prenex sentence 
\[
\forall \vec y_0\, \exists \vec x_1\, \forall \vec y_1 
  \cdots
  \exists \vec x_k\, \forall \vec y_k \,
  \varphi(\vec y_0, \vec x_1, \vec y_1,\ldots, \vec x_k, \vec y_k) .
\]
Each $\vec x_i$ and $\vec y_i$ is a vector of
variables, namely, $x_{i,1},\ldots, x_{i,m_i}$
and $y_{i,1},\ldots,y_{m'_i}$, respectively.
All the quantified variables are distinct.
We have $k>1$ and
the vectors $\vec x_i$ and~$\vec y_i$ are all nonempty
except possibly $\vec y_0$ and~$\vec y_k$ are vacuous. 
W.l.o.g., there are no free variables in~$\Phi$,
as they can be included in the outermost
quantifier~$\forall \vec y_0$. 
\begin{theorem} \label{thm:midsequent}
{\rm (Midsequent Theorem~\cite{Gentzen:untersuchungen})}
Suppose $\Phi$ is a prenex formula as above 
and that $\Phi$~is valid. There is a cut-free
LK proof of~$\fCenter\Phi$ that contains a quantifier-free
sequent~$\mathcal S$ such that every inference below~$\mathcal S$
is a contraction or a quantifier inference.
\end{theorem}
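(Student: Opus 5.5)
We will derive Theorem~\ref{thm:midsequent} from Gentzen's cut-elimination theorem plus a permutation argument. Since $\Phi$ is valid, LK proves $\fCenter\Phi$, and by cut elimination there is a cut-free proof. As $\Phi$ is prenex, the subformula property ensures that every formula in this proof is either quantifier-free or a prenex formula obtained from $\Phi$ by stripping an initial segment of its quantifier prefix and substituting terms. In particular, every quantifier inference has a prenex principal formula.

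First we normalize the initial sequents so that they contain only atomic formulas. This is done by expanding identity axioms $A\fCenter A$ for compound $A$ in the standard way. Weakenings are also handled: a weakening that introduces a quantified formula will be pushed down, or replaced by weakening on a quantifier-free instance followed by quantifier inferences. After this, every quantified formula occurring in the proof arises from a quantifier inference.

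Next, for each quantifier inference $I$ we define its \emph{order} to be the number of propositional inferences below $I$ in the proof. We then show, by induction on the sum of these orders, that the quantifier inferences can be permuted below all propositional inferences. Take a quantifier inference $I$ lying immediately above a propositional or structural inference $J$. The principal formula of $I$ is prenex, and propositional connectives never act on a quantified formula in a proof of a prenex endsequent. Hence the principal formula of $I$ is a side formula of $J$, and $I$ and $J$ can be swapped.

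Several cases need care when performing the swap. If $J$ is a two-premise inference, then after the swap $I$ must be duplicated onto the other branch, or, more simply, weakening is used to add the side formula there. If $J$ is a contraction on the principal formula of $I$, the situation is different, since contractions are permitted below the midsequent and this case does not count as a propositional inference. For $\forall$:right (and $\exists$:left), the eigenvariable condition must be preserved when $I$ is moved below $J$. This is secured by first renaming eigenvariables so that each occurs only above its own inference, the usual ``regular proof'' normalization. Each swap strictly decreases the total order, so the process terminates.

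Once no quantifier inference lies above a propositional inference, we take $\mathcal S$ to be the lowest sequent above which only propositional and structural inferences occur, on each branch. Because the propositional inferences with two premises have been pushed above all quantifier inferences, the lower part of the proof is a single thread, and its uppermost sequent is quantifier-free. Since the endsequent has nothing on the left, the only quantifier inferences in this lower part are right rules.

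The main obstacle we expect is the bookkeeping in the permutation step, specifically the interaction of binary inferences and contractions with the eigenvariable condition. We would handle it with Gentzen's original strategy: first move all contractions and weakenings as needed, then carry out the induction on the order.
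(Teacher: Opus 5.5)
The paper gives no proof of this theorem; it quotes it from Gentzen. Your outline (cut elimination, atomic initial sequents, then permuting quantifier inferences below propositional ones by induction on the order) is the standard argument behind that citation. However, the permutation step as you state it gets stuck in exactly the case you set aside. Let $I$ be a $\forall$:right inference with auxiliary formula $A(b)$ and principal formula $Q=\forall x\,A(x)$. Suppose $I$ is followed by a contraction that merges $Q$ with another occurrence of $Q$, coming from a different quantifier inference or a weakening. Suppose that contraction is followed by a propositional inference $J$. $I$ cannot be swapped with that contraction. Observing that contractions are allowed below the midsequent does not help: $I$ stays above $J$, its order never decreases, and no midsequent is reached. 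The missing observation is that $Q$, being quantified, can only be a side formula of $J$. Hence the \emph{contraction itself} can be permuted below $J$, with both copies of $Q$ carried through $J$. Then $I$ is moved below $J$ and the contraction is reapplied beneath $I$. Alternatively, use the standard device: replace $I$ by a weakening that introduces $Q$, carry $A(b)$ down through the intervening structural inferences and $J$ as an extra side formula, and apply $I$ and then a contraction below $J$. Because the original occurrence of $Q$ stays in place, every intervening contraction remains valid. Either way, the order of $I$ drops by one and no other order increases, and your regularity assumption secures the eigenvariable condition.

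Several smaller points also need repair. Swapping $I$ with a structural inference changes no order, so ``each swap strictly decreases the total order'' is false. Instead, pick $I$ as the lowest quantifier inference above some propositional $J$ with only structural inferences between them, and move it past that whole block together with $J$; a lexicographic measure also works. For binary $J$, $I$ cannot be ``duplicated'' onto the other branch, since a copy of $Q$ there may have an unrelated origin. With multiplicative rules you simply carry $A(b)$ on one side. With shared-context rules you weaken $A(b)$ into the other premise and $Q$ into the premise of $I$, then contract after reapplying $I$. These weakenings can introduce quantified formulas, which breaks the invariant you use to make $\mathcal S$ quantifier-free. A final pass on weakenings is therefore needed:
\begin{itemize}
\item push each weakening of a quantified formula below the propositional inferences, where that formula can only be a side formula;
\item replace it by a weakening of a quantifier-free instance followed by quantifier inferences;
\item permute weakenings of quantifier-free formulas upward past quantifier inferences and contractions until they lie above $\mathcal S$ (this is possible in a regular proof).
\end{itemize}
The last step also matters because the statement allows only contractions and quantifier inferences below $\mathcal S$, whereas your plan of pushing weakenings down would leave weakenings there.
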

The portion of the cut-free proof from the midsequent~$\mathcal S$
to the endsequent $\fCenter \Phi$
will be denoted~$P$.
The subformula property of cut-free proofs implies 
that $\mathcal S$ has the form
\begin{equation}\label{eq:Sform}
\fCenter \bigl\{ \varphi( \vec z^\ell_0, \vec t^\ell_1, 
     \vec z^\ell_1,\ldots, \vec t^\ell_k, \vec z^\ell_k ) 
 \bigr\}_{\ell = 1}^L
\end{equation}
where the $t^\ell_{i,j}$'s are terms and
the $z^\ell_{i,j}$'s are free variables. 
Since $\mathcal S$
is quantifier-free and valid, it is tautologically valid.\footnote{Here we assume
that the
first-order language does not include equality~($=$).}

We prove Theorem~\ref{thm:Herbrand} by
showing how to modify the proof~$P$ to form a 
Herbrand game tree with a Student winning strategy.
In fact, $P$~can already be viewed
as a kind of Herbrand game tree that can be played by traversing $P$
upward from the endsequent $\fCenter \Phi$ to the midsequent~$\mathcal S$.
Namely, $\exists$-inferences correspond to Student proposing
a value~$t_{i,j}^\ell$ to an existentially quantified variable~$x_{i,j}$,
and $\forall$-inferences correspond to Teacher responding with
a free variable $z_{i,j}$ as a counterexample value 
for a universally quantified variable~$y_{i,j}$. Contraction
inferences correspond to nodes in the Herbrand game tree having
multiple children.  

This simple correspondence between~$P$ and a game tree gives 
only properties 1.-4.\ of the Herbrand game, but not
properties 5.\ and~6. Specifically,
it does not restrict Student to proposing values
for an entire block of variables $x_{i,1},\ldots,x_{i,m_i}$ at a time.
Nor does it force Teacher to then immediately respond with new
free variables as counterexamples for the next block
of universally quantified variables $y_{i,1},\ldots,y_{i,m'_i}$. It
also does not prevent Student from playing the same value
twice at the same node in the Herbrand tree.\footnote{If
we did not care about the extra properties 5. and~6., the
proof of Theorem~\ref{thm:Herbrand} could be finished right here.}

Therefore, we will restructure the proof~$P$ 
so that it corresponds
to the Herbrand game with these restrictions. For this, we introduce
a notion of {\em block} inferences. 
A \emph{block subformula} of~$\Phi$ is any formula of the form
\begin{equation}\label{eq:forallBlock}
\Phi^\forall_i( \vec t_i ) ~:=~ 
\forall \vec y_i \, \exists \vec x_{i+1} 
  \cdots
  \exists \vec x_k\, \forall \vec y_k \,
  \varphi(\vec z_0, \vec t_1, \vec z_1, \ldots, \vec t_{i-1}, \vec z_{i-1}, \vec t_i, \vec y_i, 
           \vec x_{i+1},\vec y_{i+1}, \cdots, \vec x_k, \vec y_k) .
\end{equation}
or
\[
\Phi^\exists_i( \vec z_i ) ~:=~ 
\exists \vec x_{i+1} \, \forall \vec y_{i+1} \, 
  \cdots
  \exists \vec x_k\, \forall \vec y_k \,
  \varphi(\vec z_0, \vec t_1, \vec z_1, \ldots, \vec t_{i-1}, \vec z_{i-1}, \vec t_i, \vec z_i,
           \vec x_{i+1},\vec y_{i+1}, \ldots, \vec x_k, \vec y_k) .
\]
These are called $\forall$-block and $\exists$-block formulas, respectively.
The bound variables $\vec x_j$'s and~$\vec y_j$'s are the same 
variables as in~$\Phi$.
The $z_i$'s and~$t_i$'s are vectors of variables and terms, respectively.
By the subformula property,
to appear in~$P$, they must be the same as the $\vec z^\ell_j$'s
and~$\vec t^\ell_j$'s in the midsequent for some~$\ell$. The formulas $\Phi^\forall_i(\vec t_i)$
and~$\Phi^\exists_i(\vec z_i)$ contain parameters $\vec z_j$'s and~$\vec t_j$'s
in addition to the indicated
$\vec t_i$ and~$\vec z_i$; however, they are suppressed in the notation.

A \emph{block inference} is an inference of the form
\[
\hbox{
\Axiom$\fCenter \Gamma, \Phi^\forall_i(\vec t_i)$
\LeftLabel{$\exists$-block}
\UnaryInf$\fCenter \Gamma, \exists \vec x_i\, \Phi^\forall_i(\vec x_i)$
\DisplayProof }
\qquad \hbox{or} \qquad
\hbox{ 
\Axiom$\fCenter \Gamma, \Phi^\exists_i(\vec z_i)$
\LeftLabel{$\forall$-block}
\UnaryInf$\fCenter \Gamma, \forall \vec y_i\, \Phi^\exists_i(\vec y_i)$
\DisplayProof}
\]
The auxiliary and principal formulas of block inferences
are block formulas. Indeed, $\exists \vec x_i\, \Phi^\forall_i(\vec x_i)$
and $\forall \vec y_i\, \Phi^\exists_i(\vec y_i)$ are the same
as $\Phi^\exists_{i-1}$ and $\Phi^\forall_i$,
respectively.

\long\def\eat#1{\relax}
\eat{   
The proof~$P$ will be transformed in stages. 
The first stage enforces the \emph{contraction condition}
that a contraction~(\ref{eq:contraction}) can
be applied to a formula~$A$ that starts with an existential quantifier
only if $A$ is a block subformula.
Namely, a contraction inference can be applied to
a formula~$A$ of the form 
\begin{equation}\label{eq:banned}
\exists x_{i,s} \cdots \exists x_{i,m_i}\Phi^\forall_i
\end{equation}
only if $s=1$.  
The contraction condition is enforced by transforming $P$ line-by-line,
starting at~$\mathcal S$, to form a new proof~$P_1$.  Contractions on 
formulas~(\ref{eq:banned}) with $1 < s\le m_i$ are skipped over;
this means each formula of the form~(\ref{eq:banned}) in~$P$ is replaced
by some number $L$ of instances of the same formula in~$P_1$. When
an $\exists$\hbox{:right} inference is applied in~$P$ to 
introduce a new quantifier~$\exists x_{i,s-1}$, the same
inference is repeated $L$~times in~$P_1$. When a formula
of the form~(\ref{eq:banned}) with $s=1$ is used as
the auxiliary formula of a $\forall$:right inference in~$P$,
then $P_1$ first uses $L{-}1$ contractions to combine all $L$~occurrences
of the formula, and then applies the $\forall$:right inference.

The resulting proof~$P_1$, like~$P$, starts with the sequent~$\mathcal S$ 
as shown in~(\ref{eq:Sform}) and
ends with $\fCenter \Phi$; furthermore, it obeys the 
contraction condition.  
}   

We shall iteratively form 
proofs $P_1$, $P_2$, $P_3$, \ldots, $P_r$, starting with $P_1$ equal to~$P$. 
Each $P_\ell$ satisfies the following
properties:
\begin{itemize}
\item $P_\ell$ consists of an upper part~$P'_\ell$ and a lower part~$P''_\ell$.
\item The initial sequent of~$P'_\ell$ is a
sequent~$\mathcal S_\ell$ of the form~(\ref{eq:Sform}). In fact,
$\mathcal S_\ell$ is a substitution instance of~$\mathcal S$.  The endsequent of~$P'_\ell$
is a sequent $\mathcal R_\ell$ of the form $\fCenter \Delta$ where
$\Delta$ contains only block formulas.
\item 
The initial sequent of~$P''_\ell$ is~$\mathcal R_\ell$ and its endsequent
is $\fCenter \Phi$.
\item
$P'_\ell$ contains only $\exists$:right, $\forall$:right and
contraction inferences.  
\item
$P''_\ell$~contains
only block inferences and contractions on $\exists$-block formulas.
Hence it contains only block formulas.
\item  (The {\em unique auxiliary condition}.) Any formula that is used as an auxiliary formula in an inference in~$P''_\ell$
is not used as an auxiliary formula anywhere else in
$P'_\ell$ or~$P''_\ell$.
\item (The {\em immediate-$\forall$ condition}.) 
No inference in~$P''_\ell$
has a $\forall$-block formula in~$P''_\ell$ as a side formula. That is, except
in the concluding sequent of~$P''_\ell$,
the occurrences of $\forall$-block formulas in~$P''_\ell$ must be auxiliary
formulas of $\exists$-block inferences.
\end{itemize}

Clearly $P_1 := P$ satisfies these conditions with $P'_1$ equal to all of~$P$ and
with $P''_1$ containing only the final sequent of~$P$.
Thus $\mathcal S_1$ and~$\mathcal R_1$ are the sequents
$\mathcal S$ and $\fCenter \Phi$. 
The process
stops once a $P_r$ is obtained with $P'_r$ trivial and $\mathcal S_r$~equal 
to~$\mathcal R_r$.

We need to describe how to form $P_{\ell+1}$ from~$P_\ell$.  There 
are two cases. 
First suppose $\mathcal R_\ell$ contains
a $\forall$-block formula~$\Phi^\forall_i$
of the form $\forall \vec y_i\, \Phi^\exists_i(\vec y_i)$,
see~(\ref{eq:forallBlock}).  To form~$P'_{\ell+1}$, modify $P'_\ell$ as follows. 
Let $\vec u$
be a list of $m'_i$-many new free variables: the intent is to replace
the universally quantified variables~$\vec y_i$ with~$\vec u$.
For this, replace each logical flow graph
ancestor of the formula~$\Phi^\forall_i$
of the form
\begin{equation}\label{eq:case1Form}
\forall y_{i,s}\cdots \forall y_{i,m'_i}\, 
     \Phi^\exists_i(v_1,\ldots,v_{s-1},y_{i,s},\ldots,y_{i,m'_i})
\end{equation}
with $\Phi^\exists_i(u_1,\ldots,u_{m'_i})$, and replace throughout~$P'_\ell$
all other occurrences of the variables~$v_j$ 
with the variables~$u_j$ (for $j=1,\ldots,m'_i$). 
Also remove the $\forall$:right inferences that introduced
the formulas~(\ref{eq:case1Form}) and any contraction
inferences that act on these formulas.
We obtain a proof~$P'_{\ell+1}$ of
a sequent $\mathcal R_{\ell+1}$ that is obtained from~$\mathcal R_\ell$ by
replacing the formula $\forall \vec y_i\, \Phi^\exists_i(\vec y_i)$
with multiple copies of the formula $\Phi^\exists_i(\vec u)$.
The proof~$P''_{\ell+1}$ is formed by applying $\exists$-block
contraction inferences to merge multiple occurrences of $\Phi^\exists_i(\vec u)$ in~$\mathcal R_{\ell+1}$,
then applying a $\forall$-block inference to the one remaining~$\Phi^\exists_i(\vec u)$ 
to obtain~$\mathcal R_\ell$, and then proceeding as in~$P''_\ell$.  Putting
$P'_{\ell+1}$ and~$P''_{\ell+1}$ together gives the desired
proof~$P_{\ell+1}$.  Note that the initial sequent $\mathcal S_{\ell+1}$
of~$P_{\ell+1}$ is obtained by substituting the new free variables~$u_i$
for some of the variables in~$\mathcal S_\ell$; therefore,
$\mathcal S_{\ell+1}$ is still a tautology.

The only new auxiliary formula added in the construction of~$P''_{\ell+1}$
was $\Phi^\exists_i(\vec u)$.  This was already an auxiliary formula in~$P'_\ell$,
so it is immediate that the unique auxiliary condition still holds. And, it is trivial
that the immediate-$\forall$ condition still holds.

The second case is when there is no $\forall$-block formula
in~$\mathcal R_\ell$; i.e., every formula in~$\mathcal R_\ell$ is an $\exists$-block
formula. Identify the {\em lowest} $\forall$-block formula,
$\forall \vec y_i\, \Phi^\exists_i(\vec t_i, \vec y_i)$,
in~$P'_\ell$ that is an ancestor of some formula
$\exists \vec x_i\, \forall \vec y_i\, \Phi^\exists_i(\vec x_i, \vec y_i)$ in~$\mathcal R_\ell$.
Then $\mathcal R_\ell$ has the form 
\begin{equation}\label{eq:endPprimeiplus}
\fCenter \Gamma,\exists \vec x_i\, \forall \vec y_i\, \Phi^\exists_i(\vec x_i, \vec y_i).
\end{equation}
By the
choice of $\forall \vec y_i \Phi^\exists_i$ as the lowest
occurrence in~$P'_\ell$ of such an ancestor, 
there are no $\forall$~inferences below the sequent containing 
$\forall \vec y_i \Phi^\exists_i(\vec t_i, \vec y_i)$ in~$P'_\ell$.
Hence, no $\forall$~inference in~$P'_\ell$ uses any variable
in~$\vec t_i$ as its eigenvariable.

Let $\mathcal V$ be the set of {\em all} occurrences of the formula 
$\forall \vec y_i\, \Phi^\exists_i(\vec t_i, \vec y_i)$ in~$P'_\ell$,
including occurrences that are not ancestors of the formula
displayed in~(\ref{eq:endPprimeiplus}).
We shall modify
$P'_\ell$ by, loosely speaking, omitting all inferences that act on
descendants of formulas in~$\mathcal V$. More precisely, an
occurrence~$A$ of a formula in~$P'_\ell$ is called an $\mathcal V$-formula 
if every path in the logical flow graph from the initial sequent~$\mathcal S$
of~$P'_\ell$ to~$A$ passes through some member of~$\mathcal V$. Modify $P'_\ell$
by replacing every occurrence of a $\mathcal V$-formula with
$\forall \vec y_i\, \Phi^\exists_i(\vec t_i, \vec y_i)$, by omitting
inferences that act on $\mathcal V$-formulas. Also remove
contraction inferences that contract $\mathcal V$-formulas with non-$\mathcal V$-formulas
and add, as needed, a copy of the formula $\forall \vec y_i\, \Phi^\exists_i(\vec t_i, \vec y_i)$
to sequents below the removed contractions so as to maintain the property of 
having a valid sequent calculus proof. Call the resulting proof~$P^*_\ell$,

This proof~$P^*_\ell$ ends with a sequent
containing $\Gamma$ and one or more copies of
the formula $\forall \vec y_i\, \Phi^\exists_i(\vec t_i, \vec y_i)$
instead of the sequent~(\ref{eq:endPprimeiplus}).
Form $P'_{\ell+1}$ from~$P^*_\ell$ by combining, as needed, the copies
of $\forall \vec y_i\, \Phi^\exists_i(\vec t_i, \vec y_i)$ with contraction
inferences to obtain the 
sequent $\rightarrow \Gamma, \forall \vec y_i\, \Phi^\exists_i(\vec t_i, \vec y_i)$.
This is the endsequent $\mathcal R_{\ell+1}$ of~$P'_{\ell+1}$.

Finally, let $P''_{\ell+1}$ start with a $\exists$-block inference
applied to $\mathcal R_{\ell+1}$ to obtain the sequent~(\ref{eq:endPprimeiplus}),
and let the rest of $P''_{\ell+1}$ be the same as $P''_\ell$. 
The immediate-$\forall$ condition is preserved.
That completes the construction of $P'_{\ell+1}$ and $P''_{\ell+1}$.

The subproof~$P''_{\ell+1}$ has
$\forall \vec y_i\, \Phi^\exists_i(\vec t_i, \vec y_i)$ as 
the auxiliary formula of the inference deriving $\mathcal R''_{\ell+1}$ from~$\mathcal R''_\ell$. No other
new auxiliary formula was introduced in~$P''_\ell$. 
This formula was used at least once as an auxiliary formula in~$P'_\ell$.
Therefore, by the
induction hypothesis, this formula is not used anywhere else
as an auxiliary formula in~$P''_{\ell+1}$. In addition,
the construction of~$P^*_\ell$ ensured that there are no 
remaining uses of this formula as an auxiliary formula in~$P'_{\ell+1}$.
Thus the unique auxiliary condition still holds.

The transformation of $P_\ell$ to~$P_{\ell+1}$ always removes the introduction of 
a block formula from~$P'_\ell$ and introduces it in~$P''_{\ell+1}$ instead.
Thus the process of forming proofs~$P_\ell$ eventually stops with some~$P_r$
with $P_r = P''_r$ so that $\mathcal S_r$ is equal to~$\mathcal R_r$ and is a tautology and
quantifier-free, and the endsequent of~$P_r$ is $\fCenter \Phi$.

This proof $P_r$ corresponds precisely to a Herbrand game tree. The
game is played by traversing $P_r$ from its endsequent up to~$\mathcal S_r$.
The $\forall$-block inferences correspond to moves of Teacher, who must
play completely new free variables $\vec u$ at each move. 
The $\exists$-block inferences correspond to moves of Student
playing a vector of terms~$\vec t$ for a block of existential
quantifiers. The nodes of the Herbrand game tree, namely the
moves by Teacher, correspond
to $\forall$-block inferences in~$P_r$. In addition, if
$m'_0 = 0$ and the initial
block of $\forall$ quantifiers is vacuous, then the game tree has
a root node where Teacher makes a
vacuous move so we view $P_r$~as having a final vacuous $\forall$-block inference. 
Likewise, if $m'_k$ is zero so that Teacher also
makes vacuous moves at the leaves, we view $P_r$~as containing vacuous $\forall$-block
inferences that apply to formulas in~$S_r$.

The edges of the Herbrand game tree correspond to $\exists$-block inferences
in~$P_r$ and the nodes of the tree correspond to $\forall$-block inferences. 
A given $\exists$-block inference~$\mathcal I$ of~$P_r$ 
introducing a $\exists$-block formula $\Phi^\exists_i$ corresponds to the edge
joining the auxiliary formula (a $\forall$-block formula) of the inference~$I$ to
the principal formula (also a $\forall$-block formula) of the (unique)
$\forall$-block inference that has a direct descendant of the formula~$\Phi^\exists_i$ as
its auxiliary formula.
The nodes in the Herbrand game tree are totally ordered according to their
order of appearance in~$P_r$ when moving upward from the endsequent.

The unique auxiliary condition implies
that Student never plays the same values at the same place in the
Herbrand tree. It also implies that Teacher never plays
twice at the same point in the tree.
The immediate-$\forall$ and unique auxiliary conditions together mean that Teacher
always plays immediately after Student in response to a play of 
Student.

\section{Universal totally ordered trees}\label{sec:UnivTrees}

This appendix constructs families of totally ordered trees of depth (at most)~$k$, 
denoted $\UnivTk p k$ and $\Univk p k$, where the parameter~$p$
denotes the phase of the construction. These trees
are universal in the sense that any totally ordered depth~$k$ tree
can be embedded into $\UnivTk p k$ and $\Univk p k$ for sufficiently
large~$p$. The trees $\UnivTk p k$ will have all leaves at depth $\le k$;
the trees~$\Univk p k$ will be formed from $\UnivTk p k$ by extending all
branches to depth~$k$.

Fix $k>1$; we write $\UnivT p$ as shorthand for $\UnivTk p k$.
The trees $\UnivT p$ are constructed cumulatively,
with $\UnivT p$ the $p$-th phase of the construction.  The 
base case is $p=0$, and the tree $\UnivT 0$ is the tree with a
single root node, which we call~$u_0$.

The totally ordered tree $\UnivT p$ for $p>0$ 
is constructed by adding a new child
node to each node in~$\UnivT {p-1}$ that is at depth~$<k$ in the tree.
Specifically, suppose $\UnivT {p-1}$~has $N$ nodes,
enumerated as $u_0 \prec \cdots \prec u_{N-1}$. Suppose also
there are $N'$~nodes in $\UnivT {p-1}$ at depth~$<k$
and enumerate them in $\prec$-order
as $u_{i_0} \prec u_{i_1} \prec \cdots \prec u_{i_{N'-1}}$.
We form $\UnivT p$ from~$\UnivT {p-1}$ by adding a new child to each
of these $N'$~nodes. The
new nodes are called $u_{N+j}$ for $j=0,1, \ldots, N'{-}1$,
and $u_{N+j}$~is a child of~$u_{i_j}$. The tree~$\UnivT p$ is
clearly a totally ordered tree with
$u_i \prec u_{i+1}$ for all $i < N+N'$.
It contains $\UnivT{p-1}$ as a totally ordered subtree.

Figure~\ref{fig:UnivTrees} shows examples of $\UnivT 1$,
$\UnivT 2$, $\UnivT 3$, and $\UnivT 4$ for $k=2$.

\begin{figure}[t]
\begin{center}
\parbox{0.7in}{
\begin{tikzpicture}
\node (u0) at (0,0) {$u_0$};
\node (u1) at (-1, -1) {$u_1$};
\draw (u0) -- (u1);
\end{tikzpicture}
}
\parbox{0.9in}{%
\begin{tikzpicture}
\node (u0) at (0,0) {$u_0$};
\node (u1) at (-1, -1) {$u_1$};
\node (u2) at (0,-1) {$u_2$};
\node (u3) at (-1.5,-2) {$u_3$};
\draw (u0) -- (u1);
\draw (u0) -- (u2);
\draw (u1) -- (u3);
\end{tikzpicture}
}
\parbox{1.3in}{
\begin{tikzpicture}
\node (u0) at (0,0) {$u_0$};
\node (u1) at (-1, -1) {$u_1$};
\node (u2) at (0,-1) {$u_2$};
\node (u3) at (-1.5,-2) {$u_3$};
\node (u4) at (1,-1) {$u_4$};
\node (u5) at (-1,-2) {$u_5$};
\node (u6) at (0,-2) {$u_6$};
\draw (u0) -- (u1);
\draw (u0) -- (u2);
\draw (u1) -- (u3);
\draw (u0) -- (u4);
\draw (u1) -- (u5);
\draw (u2) -- (u6);
\end{tikzpicture}
}
\parbox{1.5in}{
\begin{tikzpicture}
\node (u0) at (0,0) {$u_0$};
\node (u1) at (-1, -1) {$u_1$};
\node (u2) at (0,-1) {$u_2$};
\node (u3) at (-1.5,-2) {$u_3$};
\node (u4) at (0.75,-1) {$u_4$};
\node (u5) at (-1,-2) {$u_5$};
\node (u6) at (0,-2) {$u_6$};
\node (u7) at (1.5,-1) {$u_7$};
\node (u8) at (-0.5,-2) {$u_8$};
\node (u9) at (0.5,-2) {$u_9$};
\node (u10) at (1.25,-2) {$u_{10}$};
\draw (u0) -- (u1);
\draw (u0) -- (u2);
\draw (u1) -- (u3);
\draw (u0) -- (u4);
\draw (u1) -- (u5);
\draw (u2) -- (u6);
\draw (u0) -- (u7);
\draw (u1) -- (u8);
\draw (u2) -- (u9);
\draw (u4) -- (u10); 
\end{tikzpicture}
}
\end{center}
\caption{From left-to-right, 
the trees $\UnivTk 12$, $\UnivTk 22$, $\UnivTk 32$ and $\UnivTk 42$.  
The node~$u_i$ is the $i$-th node in the $\prec$ order.}
\label{fig:UnivTrees}
\end{figure}

\begin{figure}[t]
\begin{center}
\parbox{0.7in}{
\begin{tikzpicture}
\node (u0) at (0,0) {$v_0$};
\node (u1) at (-1, -1) {$v_1$};
\node (v2) at (-1, -2) {$v_2$};
\draw (u0) -- (u1);
\draw (u1) -- (v2);
\end{tikzpicture}
}
\parbox{0.9in}{%
\begin{tikzpicture}
\node (u0) at (0,0) {$v_0$};
\node (u1) at (-1, -1) {$v_1$};
\node (u2) at (0,-1) {$v_2$};
\node (u3) at (-1.5,-2) {$v_3$};
\node (v4) at (0,-2) {$v_4$};
\draw (u0) -- (u1);
\draw (u0) -- (u2);
\draw (u1) -- (u3);
\draw (u2) -- (v4);
\end{tikzpicture}
}
\parbox{1.3in}{
\begin{tikzpicture}
\node (u0) at (0,0) {$v_0$};
\node (u1) at (-1, -1) {$v_1$};
\node (u2) at (0,-1) {$v_2$};
\node (u3) at (-1.5,-2) {$v_3$};
\node (u4) at (1,-1) {$v_4$};
\node (u5) at (-1,-2) {$v_5$};
\node (u6) at (0,-2) {$v_6$};
\node (v7) at (1,-2) {$v_7$};
\draw (u0) -- (u1);
\draw (u0) -- (u2);
\draw (u1) -- (u3);
\draw (u0) -- (u4);
\draw (u1) -- (u5);
\draw (u2) -- (u6);
\draw (u4) -- (v7);
\end{tikzpicture}
}
\parbox{1.5in}{
\begin{tikzpicture}
\node (u0) at (0,0) {$v_0$};
\node (u1) at (-1, -1) {$v_1$};
\node (u2) at (0,-1) {$v_2$};
\node (u3) at (-1.5,-2) {$v_3$};
\node (u4) at (0.75,-1) {$v_4$};
\node (u5) at (-1,-2) {$v_5$};
\node (u6) at (0,-2) {$v_6$};
\node (u7) at (1.5,-1) {$v_7$};
\node (u8) at (-0.5,-2) {$v_8$};
\node (u9) at (0.5,-2) {$v_9$};
\node (u10) at (1.25,-2) {$v_{10}$};
\node (v11) at (2.0,-2) {$v_{11}$};
\draw (u0) -- (u1);
\draw (u0) -- (u2);
\draw (u1) -- (u3);
\draw (u0) -- (u4);
\draw (u1) -- (u5);
\draw (u2) -- (u6);
\draw (u0) -- (u7);
\draw (u1) -- (u8);
\draw (u2) -- (u9);
\draw (u4) -- (u10); 
\draw (u7) -- (v11);
\end{tikzpicture}
}
\end{center}
\caption{From left-to-right, 
the trees $\Univk 12$, $\Univk 22$, $\Univk 32$ and $\Univk 42$.  
The node~$v_i$ is the $i$-th node in the $\prec$ order.}
\label{fig:FinalUnivTrees}
\end{figure}

We define the \emph {depth} 
of~$u_i$ in $\UnivTk pk$
to equal
the distance of~$u_i$ from the root~$u_0$. Let $D^k_{d,p}$~equal
the number of nodes in~$\UnivTk p k$ at depth~$d$.  
There is a single root, so $D^k_{0,p} =\penalty10000 1$. For
phase~$p=0$, we have $D^k_{d,0} =\penalty10000 0$ for $d >\penalty10000 0$. 
For phase $p>0$ and depth $d=1,\ldots,k$, we have 
$D^k_{d,p} = D^k_{d,p-1} +\penalty10000 D^k_{d-1,p-1}$; this is because
the nodes at depth~$d$ in phase~$p$ were either present in
phase $p{-}1$ or were added in phase~$p$ as a child
of a node at depth~$d{-}1$ that was already present in phase~$p{-}1$.
Therefore, letting $\binom{i}{j}=0$ for $j>i$,
we have 
\[
D^k_{d,p} ~=~ \binom p d \quad \hbox{for $d \le k$.} 
\]
(For example, see $\UnivTk 4 2$ in Figure~\ref{fig:UnivTrees}.)
The total number of nodes in $\UnivTk p k$ is equal
to
\[
|\UnivTk p k| ~=~ \sum_{d=0}^k D^k_{d,p} 
     ~=~ \sum_{d=0}^k \binom p d .
\] 
In particular, $\UnivTk p k$ has less than $p^k$ many nodes.

The tree $\Univk pk$ is formed from $\UnivTk pk$ by extending all
branches to have length exactly~$k$.  For concreteness, we
do this by considering, in $\prec$-order,
each leaf node~$u$ in~$\UnivTk pk$ which is at depth $k' < k$,
and adding a path of length $k-k'$ to extend the path through~$u$
to terminate at a leaf at depth~$k$.  The $\prec$-order is
extended to order the nodes of~$\Univk pk$ in the order that
nodes are added. (This is illustrated in the
simple case of $k=2$ in Figure~\ref{fig:FinalUnivTrees}.) In the final stage ($p$-th stage)
of forming~$\UnivTk pk$, there were exactly $\sum_{i=1}^{k-2} D^k_{p-1,k}$ new leaves added
at depths $i<k-1$. Therefore, $\Univk pk$ has 
$\sum_{i=1}^{k-2} (k{-}i{-1}) \cdot D^k_{p-1,k}$ more nodes than~$\UnivTk pk$.

The trees $\UnivTk pk$ and~$\Univk pk$ can be straightforwardly constructed by 
a procedure with runtime polynomial in the size of the trees.
Indeed, if $k$~is fixed, there is a polynomial time algorithm which,
given $p$ as input,
constructs $\UnivTk p k$ and~$\Univk pk$. 

The next theorem shows that $\UnivTk pk$ and thereby $\Univk pk$ are ``universal''
in that all totally ordered trees can be efficiently 
embedded into them.  

\begin{theorem}
Let
$S$~be a depth~$k$ totally ordered tree with $p$ nodes. Then there is a embedding of~$S$ into~$\Univk pk$.
\end{theorem}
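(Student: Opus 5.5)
We embed $S$ into $\UnivTk pk$ first. Since $\UnivTk pk$ is a totally ordered subtree of $\Univk pk$ (the extension only adds nodes after the existing ones), this suffices. Enumerate the nodes of $S$ as $s_0\prec s_1\prec\cdots\prec s_{p-1}$. We build the embedding $\tau$ inductively. Alongside it we maintain the invariant that $\tau(s_j)$ is a node of $\UnivTk {j}k$ that was \emph{added in phase exactly}~$j$, for $j\ge1$, and that $\tau(s_0)=u_0$. Because nodes added in later phases come after all nodes of earlier phases in the $\prec$-order, the invariant makes $\tau(s_0)\prec\tau(s_1)\prec\cdots$ automatic. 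The map is therefore injective and preserves the total order in both directions, as Definition~\ref{def:embedding} requires.

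For the inductive step, suppose $\tau$ is defined on $s_0,\ldots,s_{j-1}$ and let $s_i$, with $i<j$, be the parent of $s_j$. Then $\tau(s_i)$ exists in $\UnivTk {i}k\subseteq \UnivTk {j-1}k$. Its depth equals the depth of $s_i$ in $S$, since $\tau$ preserves parenthood and maps root to root. This depth is $<k$ because $s_i$ has a child in a depth-$k$ tree. By the construction, phase $j$ adds exactly one new child to every node of $\UnivTk{j-1}k$ at depth $<k$, and in particular it adds one to $\tau(s_i)$. We set $\tau(s_j)$ to be this new child. This preserves parenthood and the invariant. Conversely, if $\tau(u)$ is a child of $\tau(v)$, then comparing the actual parents through injectivity shows that $u$ is a child of $v$, so the ``if and only if'' for parenthood also holds.

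The one point that needs care, and the main obstacle, is the depth bookkeeping together with the claim that newly added children land after all prior nodes. The first needs a short induction showing that $\tau$ preserves depth. The second follows from the indexing $u_{N+j}$ in the construction of $\UnivT p$. Since $s_{p-1}$ is handled in phase $p-1\le p$, the image lies in $\UnivTk {p-1}k\subseteq\Univk pk$, which proves the theorem (with room to spare).
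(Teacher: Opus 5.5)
Your proposal is correct and follows the paper's proof: map the root to $u_0$, and send the $j$-th node of $S$ to the unique child of the image of its parent that is added in phase $j$, so that order preservation follows because later phases come later in $\prec$. You also supply details the paper leaves implicit: the parent's image has depth $<k$, the converse direction of the parent/child condition holds, and the image already lies in $T^{U,p-1,k}$.
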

Note that for fixed~$k$, $\Univk pk$~is polynomial
size in the size of~$S$. 
\begin{proof}
It suffices to give an embedding of~$S$ into $\UnivTk pk$.
To start, $\tau$~maps the root of~$S$ to the
root of~$\UnivTk pk$.  Then, for successive values
of~$i>0$, the $i$-th node, $v_i$, of~$S$
is mapped to the unique node added in
the $i$-th phase of forming~$\UnivTk pk$ that satisfies 
the third embedding condition of Definition~\ref{def:embedding}
that stipulates the parent-child relation is preserved by the
embedding.
\end{proof}


\long\def\eat#1{}

\eat{
\section{Old Deleted Stuff}

In the above example the game has very simple structure; however, for other quantifier prefixes such a linear structure is not always possible. Here is an example of a formula with the dual prefix, $\exists\forall\exists$. More precisely, the formula has the prefix $\exists\forall\exists\exists\exists$, but the essential property is the alternation of quantifiers. 

Let $A$ be the formula
\[
\forall y_1\, \forall y_2\, [ P(f,y_1,h(y_1,y_2))\vee P(g(y_1),y_2,k(y_1,y_2)) ],
\]
and $\alpha$ be the quantifier-free part of~$A$. Here $f$~is a constant symbol,
and $g, h, k$ are function symbols. Then, let $C$ be the formula
\[
\exists x\, \forall y\, \exists z\, \exists y_1\, \exists y_2\, 
   [ \alpha(y_1,y_2)\rightarrow  P(x,y,z)].
\]
To see that this is a logically valid sentence, notice that it is a prenex form of the following formula:
\[
\forall y_1\, \forall y_2\, [ P(f,y_1,h(y_1,y_2))\vee P(g(y_1),y_2,k(y_1,y_2)) ]\to \exists x\forall y\exists z P(x,y,z).
\]
The fact that $C$ is a valid sentence can be witnessed by
giving a winning strategy for Student in the Student-Teacher game. The
winning strategy is shown by the tree in Figure~\ref{fig:simpleExample}: the nodes
in the tree are labeled by moves of Teacher; moves of Student label the edges. 
The strategy is:
\begin{itemize}
\setlength{\itemsep}{0pt}
\setlength{\parsep}{0pt}
\item[0.] The first quantifier is~$\exists$, so Teacher does not
plan anything at the root.
\item[1.] Student first plays $f$ (as a value for~$x$) 
on the first edge (leftmost) from the root.
\item[2.] Teacher plays a new value~$b_1$ (as a value for~$y$) 
on the first child of the root.
\item[3.] Student plays $b_1$ (as a value for~$x$)
on the second edge from the root.
\item[2.] Teacher plays a new value~$b_2$ (as a value for~$y$) 
on the second child of the root.
\item[5.] Student makes six more moves.
She plays $h(b_1,b_2)$ on the edge below the first child of the root,
and $k(b_1,b_2)$ on the edge below the second child of the root.
She continues by playing $b_1$ and~$b_2$ as values
for $y_1$ and~$y_2$ below the two children of the root.
\end{itemize}
 
\begin{figure}[t]
\begin{center}
\begin{minipage}{1.3in}
\noindent
\begin{tikzpicture}
[scale=0.7,
   vert/.style={inner sep=1.2pt,draw, circle, fill},
   every edge quotes/.style = {auto, font=\footnotesize}]
\node[vert] (root) at (0,0.5) {};
\node[vert,label={[left,yshift=3pt]:{$b_1$}}] (child1) at (-1,-1) {};
\node[vert,label={[right,yshift=3pt]:{$b_2$}}] (child2) at (1,-1) {};
\node[vert] (child11) at (-1,-2) {};
\node[vert] (child21) at (1,-2) {};
\node[vert] (child12) at (-1,-3) {};
\node[vert] (child22) at (1,-3) {};
\node[vert] (child13) at (-1,-4) {};
\node[vert] (child23) at (1,-4) {};
\draw (root) edge (child1);
\draw (root) -- node [left,pos=0.4] {$f$} (child1);
\draw (child1) -- node [left] {$h(b_1,b_2)$} (child11);
\draw (child11) -- node [left] {$b_1$} (child12);
\draw (child12) -- node [left] {$b_2$} (child13);
\draw (root) -- node [right,pos=0.4] {$b_1$} (child2);
\draw (child2) -- node [right] {$k(b_1,b_2)$} (child21);
\draw (child21) -- node [right] {$b_1$} (child22);
\draw (child22) -- node [right] {$b_2$} (child23); 
\end{tikzpicture}
\end{minipage}
\hspace*{0.7in}
\begin{minipage}{1.2in}
\centering
\begin{tabular}{c|c|c|}
\cline{2-3}
$\exists x$ & $f$ & $b_1$ \\ \cline{2-3}
$\forall y_1$ & $b_1$ & $b_2$ \\ \cline{2-3}
$\exists z$ & $h(b_1,b_2)$ & $k(b_1,b_2)$ \\ \cline{2-3}
$\exists y_1$ & $b_1$ & $b_1$ \\ \cline{2-3}
$\exists y_2$ & $b_2$ & $b_2$ \\ \cline{2-3}
\end{tabular}
\end{minipage}
\end{center}
\caption{\small Teacher's moves label vertices; Student's moves label edges.
Vertices and edges that correspond to missing quantifiers are not labeled
as the player does not make any move at that point.
The matrix representation of the strategy is
shown on the right. Each column in the matrix corresponds to a 
branch in the tree from the root to a leaf. In this simple example, the columns of
the matrix correspond to branches in left-to-right order; however, this is not
true in general.
}
\label{fig:simpleExample}
\end{figure}

To verify that Student wins the game with this strategy, we must show that
the disjunctions formed using terms that were played along branches of the tree form a tautology.
In this case, there are two branches from the root to the leaves; one has the terms
$f, b_1, h(b_1, b_2), b_1, b_2$ and the other has the terms
$b_1, b_2, h(b_1,b_2), b_1,b_2$.
Substituting these two sequences
of terms into the quantifier-free part of~$C$ gives
\[
[\alpha(b_1,b_2)\rightarrow P(f,b_1,h(b_1,b_2))]\lor 
                       [\alpha(b_1,b_2)\rightarrow P(g(b_1),b_2,h(b_1,b_2))].
\]
It is easy to check that this is a tautology.

}  

\bibliographystyle{siam}
\bibliography{logic}

\end{document}